\title%[]
{The hiring problem with rank-based strategies}
\date{11 September, 2018}
\author{Svante Janson}
\thanks{Partly supported by the Knut and Alice Wallenberg Foundation}
\address{Department of Mathematics, Uppsala University, PO Box 480,
SE-751~06 Uppsala, Sweden}
\email{svante.janson@math.uu.se}
\urladdr{http://www.math.uu.se/svante-janson}
\subjclass[2010]{60C05, 68W40; 62L10, 68W27}
\numberwithin{equation}{section}
\renewcommand\le{\leqslant}
\renewcommand\ge{\geqslant}
\theoremstyle{plain}% default
\newtheorem{theorem}{Theorem}[section]
\newtheorem{lemma}[theorem]{Lemma}
\newtheorem{corollary}[theorem]{Corollary}
\theoremstyle{definition}
\newtheorem{example}[theorem]{Example}
\newtheorem{problem}[theorem]{Problem}
\newtheorem{remark}[theorem]{Remark}
\newtheorem*{ack}{Acknowledgement}
\theoremstyle{remark}
\newtheorem*{claim}{Claim}
\newenvironment{romenumerate}[1][-10pt]{% optional argument changes indentation
\addtolength{\leftmargini}{#1}\begin{enumerate}% gives (i), (ii) etc.
 }{\end{enumerate}}
\newcounter{oldenumi}
\newcounter{thmenumerate}
\newcounter{xenumerate}   %no left indentation; thus wider lines
\newcommand\pfitemx[1]{\par#1:}
\newcommand\pfitemref[1]{\pfitemx{\ref{#1}}}
\newcommand{\refT}[1]{Theorem~\ref{#1}}
\newcommand{\refC}[1]{Corollary~\ref{#1}}
\newcommand{\refL}[1]{Lemma~\ref{#1}}
\newcommand{\refR}[1]{Remark~\ref{#1}}
\newcommand{\refS}[1]{Section~\ref{#1}}
\newcommand{\refSs}[1]{Sections~\ref{#1}}
\newcommand{\refE}[1]{Example~\ref{#1}}
\newcommand{\refEs}[1]{Examples~\ref{#1}}
\newcommand{\refApp}[1]{Appendix~\ref{#1}}
\newcommand{\refTab}[1]{Table~\ref{#1}}
\newcommand\REM[1]{{\raggedright\texttt{[#1]}\par\marginal{XXX}}}
\newcommand\XREM[1]{\relax}
\xdef\klockan{\the\count1.0\the\count255}
\xdef\klockan{\the\count1.\the\count255}\fi
\newcommand{\sumko}{\sum_{k=0}^\infty}
\newcommand{\summo}{\sum_{m=0}^\infty}
\newcommand{\sumj}{\sum_{j=1}^\infty}
\newcommand{\sumk}{\sum_{k=1}^\infty}
\newcommand{\summ}{\sum_{m=1}^\infty}
\newcommand{\sumkn}{\sum_{k=1}^n}
\newcommand{\sumkm}{\sum_{k=1}^m}
\newcommand{\sumkom}{\sum_{k=0}^m}
\newcommand{\sumkomi}{\sum_{k=0}^{m-1}}
\newcommand{\prodiq}{\prod_{i=1}^q}
\newcommand{\prodk}{\prod_{k=1}^\infty}
\newcommand{\prodoj}{\prod_{j=0}^\infty}
\newcommand\set[1]{\ensuremath{\{#1\}}}
\newcommand\xpar[1]{(#1)}
\newcommand\bigpar[1]{\bigl(#1\bigr)}
\newcommand\Bigpar[1]{\Bigl(#1\Bigr)}
\newcommand\biggpar[1]{\biggl(#1\biggr)}
\newcommand\lrpar[1]{\left(#1\right)}
\newcommand\bigsqpar[1]{\bigl[#1\bigr]}
\newcommand\xcpar[1]{\{#1\}}
\newcommand\bigcpar[1]{\bigl\{#1\bigr\}}
\newcommand\bigabs[1]{\bigl|#1\bigr|}
\newcommand\Bigabs[1]{\Bigl|#1\Bigr|}
\newcommand\lrabs[1]{\left|#1\right|}
\def\rompar(#1){\textup(#1\textup)}    % usage: \rompar(...)
\newcommand\xfrac[2]{#1/#2}
\newcommand\parfrac[2]{\lrpar{\frac{#1}{#2}}}
\newcommand\bigparfrac[2]{\bigpar{\frac{#1}{#2}}}
\newcommand\Bigparfrac[2]{\Bigpar{\frac{#1}{#2}}}
\def\xexp(#1){e^{#1}}
\newcommand\ceil[1]{\lceil#1\rceil}
\newcommand\floor[1]{\lfloor#1\rfloor}
\newcommand\ntoo{\ensuremath{{n\to\infty}}}
\newcommand\ktoo{\ensuremath{{k\to\infty}}}
\newcommand\mtoo{\ensuremath{{m\to\infty}}}
\newcommand\itoo{\ensuremath{{i\to\infty}}}
\newcommand\norm[1]{\|#1\|}
\newcommand\bignorm[1]{\bigl\|#1\bigr\|}
\newcommand\punkt{.\spacefactor=1000}    % om problem!
\newcommand\iid{i.i.d\punkt}    
\newcommand\ie{i.e\punkt}
\newcommand\eg{e.g\punkt}
\newcommand\viz{viz\punkt}
\newcommand\cf{cf\punkt}
\newcommand{\as}{a.s\punkt}
\newcommand\whp{w.h.p\punkt}
\newcommand{\tend}{\longrightarrow}
\newcommand\dto{\overset{\mathrm{d}}{\tend}}
\newcommand\pto{\overset{\mathrm{p}}{\tend}}
\newcommand\asto{\overset{\mathrm{a.s.}}{\tend}}
\newcommand\eqd{\overset{\mathrm{d}}{=}}
\newcommand\op{o_{\mathrm p}}
\newcommand\Op{O_{\mathrm p}}
\newcommand\bbR{\mathbb R}
\newcommand\bbN{\mathbb N}
\newcommand\bbZ{\mathbb Z}
\newcounter{CC}
\newcounter{cc}
\renewcommand\Re{\operatorname{Re}}
\newcommand\E{\operatorname{\mathbb E{}}}
\renewcommand\P{\operatorname{\mathbb P{}}}
\newcommand\Var{\operatorname{Var}}
\newcommand\Exp{\operatorname{Exp}}
\newcommand\Bi{\operatorname{Bi}}
\newcommand\Bin{\operatorname{Bin}}
\newcommand\Be{\operatorname{Be}}
\newcommand\Ge{\operatorname{Ge}}
\newcommand\ga{\alpha}
\newcommand\gb{\beta}
\newcommand\gd{\delta}
\newcommand\gam{\gamma}
\newcommand\gG{\Gamma}
\newcommand\kk{\kappa}
\newcommand\gl{\lambda}
\newcommand\gs{\sigma}
\newcommand\gss{\sigma^2}
\newcommand\eps{\varepsilon}
\renewcommand\phi{\xxx}  %% WARNING
\newcommand\cH{\mathcal H}
\newcommand\cJ{\mathcal J}
\newcommand\cL{{\mathcal L}}
\newcommand\cR{{\mathcal R}}
\newcommand\tT{{\widetilde T}}
\newcommand\tW{\widetilde W}
\newcommand\tY{{\tilde Y}}
\newcommand\tZ{{\tilde Z}}
\newcommand\indic[1]{\boldsymbol1\xcpar{#1}} 
\newcommand\bigindic[1]{\boldsymbol1\bigcpar{#1}}
\newcommand\qw{^{-1}}
\newcommand\qww{^{-2}}
\newcommand\qq{^{1/2}}
\newcommand\intoo{\int_0^\infty}
\newcommand\ooi{(0,1]}
\newcommand\ooo{[0,\infty)}
\newcommand\dtv{d_{\mathrm{TV}}}
\newcommand\dd{\,\mathrm{d}}
\newcommand{\mgf}{moment generating function}
\newcommand{\chf}{characteristic function}
\newcommand\rv{random variable}
\newcommand\lhs{left-hand side}
\newcommand\rhs{right-hand side}
\newcommand\etto{\bigpar{1+o(1)}}
\newcommand\xoo{_1^\infty}
\newcommand\oxoo{_0^\infty}
\newcommand\XX{X^*}
\newcommand\XXo[1]{\XX_{(#1)}}
\newcommand\hV{\widehat V}
\newcommand\xsim{\in}
\newcommand\KK{L}
\newcommand\hgs{\hat\sigma}
\newcommand\hgss{\hgs^2}
\newcommand\YYY{(Y_k)\xoo}
\newcommand\hmedian{`hiring above the median'}
\newcommand\hpercentile[1]{`the $#1$-percentile rule'}
\newcommand\hbest[1]{`hiring above the $#1$-th best'}
\newcommand\hmean{`hiring above the mean'}
\newcommand\Wmed{W_{\mathrm{med}}}
\newcommand\nuq{\nu}
\newcommand\xr{\tilde r}
\newcommand\xrho{\tilde \rho}
\newcommand\xW{\tW}
\newcommand\xZ{\tZ}
\newcommand\xY{\tY}
\newcommand\hy{\hat y}
\newcommand\xm{\widetilde m}
\newcommand\Hz{\cH(z)}
\newcommand\Mle[1]{M^{\le #1}}
\newcommand\Mgt[1]{M^{> #1}}
\newcommand\YYn{Y^*_n}
\newcommand\YYx[1]{Y^*_{#1}}
\newcommand\bX{\bar X}
\newcommand\chr{\check r}
\newcommand\chN{\check N}
\newcommand\chW{\check W}
\newcommand\cJx{\cJ^*}
\newcommand\rx{r_*}
\newcommand\LL{\widehat L}
\newcommand\xtau{\nu}
\newcommand\CS{Cauchy--Schwarz}
\newcommand\CSineq{\CS{} inequality}
\newcommand{\Levy}{L\'evy}
\begin{document}

\begin{abstract} 
The hiring problem is studied for general strategies based only on the
relative ranking of the candidates; this includes some well known 
strategies studied before such as hiring above the median. 
We give general limit theorems
for the number of hired candidates and some other properties, extending
previous results. 
The results exhibit a dichotomy between two classes of rank-based
strategies: either the asymptotics of the process are determined by the
early events, with a.s.\ convergence of suitably normalized random
variables, or there is a mixing behaviour without long-term memory and with
asymptotic normality.
\end{abstract}

\maketitle

\section{Introduction}\label{S:intro}

The \emph{hiring problem} is a variant of the well-known secretary problem,
in which we want to hire many good applicants and not just the best.
An informal formulation is that a large 
number of candidates are examined (interviewed) one by one; immediately
after each interview we decide whether to hire the candidate or not, based
on the value of the candidate (which is assumed to be revealed during the
interview) and of the values of the candidates seen earlier.
This is thus an on-line type of decision problem.
The mathematical model assumes that the values of the candidates are \iid{}
\rv{s}, with some continuous distribution (which prevents ties).
See below and \refS{Sgeneral} for formal details.

There are two conflicting aims in the hiring problem: 
we want to hire (rather) many
candidates but we also want them to be good. 
Thus there is no single goal in the hiring problem, and thus we cannot
talk about an optimal solution. 
Instead, the mathematical problem is to analyse
properties of various proposed strategies. 
The property that has been most studied is
the number of accepted candidates among the first $n$ examined, here denoted
$M_n$. We will also study the inverse function $N_m$, the number of
candidates examined until $m$ are accepted.
Some other properties, such as the distribution of the value of the
accepted candidates,
are discussed in \refSs{Scond}--\ref{Saccepted}.

The hiring problem seems to have been studied first by \citet{Preater},
and later (independently) by \citet{BroderEtAl}, who also
invented the name `hiring problem'.
Further papers studying the hiring problem under various strategies are
\citet{KriegerPS-rank2007,KriegerPS-beat2008,KriegerPS-extreme2010},
\citet{ArchibaldMartinez2009},
\citet{GaitherWard2012},
\citet{HelmiP-ANALCO2012,HelmiP-median2013},
\citet{HelmiMP-Latin2012,HelmiMP-best2014}; 
see \cite{HelmiMP-best2014} for a more detailed history.

There are two main groups of strategies. 
In the present paper we study only \emph{rank-based strategies}, \ie,
strategies that depend only on the rank of
the candidate among the ones seen so far; in other words, on the relative
order of the values of the candidates.
A typical example is \hmedian, see below; see also
\cite{KriegerPS-rank2007,
BroderEtAl,
ArchibaldMartinez2009,
GaitherWard2012,
HelmiP-ANALCO2012,
HelmiMP-Latin2012,
HelmiP-median2013,
HelmiMP-best2014}.
In statistical terms, the values of the candidates are regarded as on an
ordinal scale.
Thus, the distribution of the value of a candidate
does not matter, and can freely be chosen as \eg{} uniform (an obvious
standard choice used by some previous authors) or exponential (used in
the analysis in the present paper). 
Furthermore, 
for rank-based strategies, it is equivalent 
(for a fixed $n$) to assume that the values of the
first $n$ candidates form a uniformly random permutation of \set{1,\dots,n}
\cite{KriegerPS-rank2007,
ArchibaldMartinez2009,
GaitherWard2012,
HelmiP-ANALCO2012,
HelmiMP-Latin2012,
HelmiP-median2013,
HelmiMP-best2014}.

The alternative is to use a strategy depending on the actual values; a
typical example is \hmean{}
\cite{Preater,
BroderEtAl,
KriegerPS-beat2008,KriegerPS-extreme2010}.
For such strategies, the results  depend on the given distribution of the
value of a candidate; several different distributions have been investigated
in the papers just mention. Such strategies will not be considered in the
present paper.

In the present paper we thus study rank-based strategies.
More precisely, following \citet{KriegerPS-rank2007},
we consider strategies of the following form (which seems to include all
reasonable rank-based strategies).
We assume throughout the paper that we are given a sequence of integers
$r(m)$, $m\ge0$, 
such that %$r(0)=1$ and
\begin{equation}
  \label{eq:rr}
r(0)=1\qquad\text{and}\qquad
  r(m)\le r(m+1)\le r(m)+1,\quad m\ge0.
\end{equation}
Note that this implies $1\le r(m)\le m+1$ for every $m\ge0$.
Then the strategy is that if so far $m\ge0$ candidates have been accepted, then 
(if $r(m)\le m$)
the next candidate is accepted if her value is above the $r(m)$-th best
value of the ones already accepted. 
If $r(m)=m+1$, we always accept the next candidate.

\begin{remark}
   Some authors have chosen to define smaller values as better.
This is obviously equivalent, but some care has to be taken when comparing
definitions and results. (In the discussion here, we have when
necessary translated to our setting.)
\end{remark}

One basic example is \emph{\hmedian}, where we hire every
candidate that is better than the median of the ones seen so far. For $m$
odd, this means taking $r(m)=(m+1)/2$, but for $m$ even, there are two
possible values of the median.
Tradition established by previous authors 
\cite{BroderEtAl,
ArchibaldMartinez2009,
%HelmiP-ANALCO2012,
HelmiP-median2013}
says that we choose the smaller 
value as the threshold. (This is thus the less restrictive policy of the two
possibilities.) This means taking $r(m)=m/2+1$ when $m$ is even, so we can
summarize the strategy \hmedian{} by 
\begin{equation}\label{hmedian}
  r(m):=\floor{m/2}+1 = \ceil{(m+1)/2}, \qquad m\ge0.
\end{equation}
The sequence (starting with $r(0)$) thus is $1,1,2,2,3,3\dots$

The alternative interpretation of `median' when $m$ is even is instead a
special case of the strategy known as \emph{\hpercentile{\ga}}, 
which, again by tradition 
\cite{KriegerPS-rank2007,
%GaitherWard2012,
HelmiP-median2013}, 
is defined by
\begin{equation}\label{hpercentile}
  r(m):=\ceil{\ga m}, \qquad m\ge1.
\end{equation}
In the case $\ga=\frac12$, we thus take $r(m)=m/2$ when $m\ge2$ is even,
meaning a smaller $r(m)$ and thus a larger threshold than in \hmedian;
the sequence (starting with $r(0)$) is $1,1,1,2,2,3,\dots$

%An alternative is \hquantile{(1-\ga)} \cite{ArchibaldMartinez2009}
% with $\ceil{\ga(m+1)}. Incorrect?

A third simple example is \emph{\hbest{r}}, where $r\ge1$ is a fixed
number
\cite{ArchibaldMartinez2009,
HelmiMP-Latin2012,
HelmiMP-best2014}.
This means $r(m)=r$ when $m\ge r$; we always hire the $r$ first
candidates, so the complete definition in accordance with \eqref{eq:rr} is
\begin{equation}\label{hbest}
  r(m) := \min\set{r,m+1}, 
\qquad m\ge0.
\end{equation}
Note that the case $r=1$ gives the strategy of hiring only the candidates
that are better than everyone seen earlier, i.e., the records. 

The present paper gives a general analysis of strategies of the type above,
with an arbitrary sequence $r(m)$ satisfying \eqref{eq:rr}.
Our main results give the asymptotic distribution of $N_m$ (in general) 
and $M_n$ (under weak regularity assumptions on $r(m)$), see in particular
Theorems \ref{TN4}, \ref{TNsmall}, \ref{TI1} and \ref{TI2}.
In particular, this gives new proofs of known results (and some new)
for the examples above.

It turns out that there is a dichotomy:
the general results are of two different types, depending on whether
$\sum_m r(m)\qww <\infty$ or 
$\sum_m r(m)\qww =\infty$;
we will call these two cases \emph{large $r(m)$} and \emph{small $r(m)$},
  respectively.
Note that the strategies 
\hmedian{} and \hpercentile{\ga}
have large $r(m)$, while
\hbest{r} has small $r(m)$; 
indeed, the limit theorems found by previous
researchers for these cases are of different types, compare \eg{}
\cite{HelmiP-median2013} and \cite{HelmiMP-best2014}.

The main differences between the two cases can be summarized as follows,
at least assuming some further regularity of $r(m)$.
For simplicity, we consider here only $M_n$;  the same types of results hold
for $N_m$.

{\setlength{\leftmargini}{2\parindent}
\begin{description}
\setlength{\itemindent}{-\parindent}
\item[large $r(m)$, $\sum_m r(m)\qww <\infty$] 
$M_n/\E M_n$ converges to a non-degenerate distribution on $\ooo$.
(Thus, the limit is not normal.)
Furthermore, $M_n/\E M_n$ converges a.s. Hence, the limit and the asymptotic
behaviour are essentially determined by what happens very early, i.e., by the
values of the first few candidates.
This also means a strong long-range dependence in the sequence $M_n$.

\item[small $r(m)$, $\sum_m r(m)\qww =\infty$]  
Asymptotic normality of $M_n$. 
There is no long-range dependency; 
instead $M_n$ is asymptotically independent of what happened with the first
$n_0$ candidates, for any fixed $n_0$.
In particular, there is no \as{} convergence.
\end{description}}

Intuitively, the reason for the difference between the two cases is that
when $r(m)$ is small,
each accepted candidate
has (typically) a rather large influence 
on the threshold, and thus on the future of the process, and these influences
add up and eventually dominate over the influences of the values of the
first candidates, 
while if $r(m)$ is large, then the influences of later candidates are small,
and the effects of the first few candidates dominate.

We state here two theorems that exemplify our main results, 
one for large $r(m)$ and one for small.
In both cases, we assume a regularity condition on $r(m)$ yielding simpler
results; proofs and more general results are given in
\refSs{Slarge}--\ref{Slinear}  and \ref{Ssmall}. 
See also the examples in \refS{Sex}, including the counterexample \refE{Eirreg}.

First, consider the case $r(m)=\ga m+ O(1)$, where $0<\ga\le1$; note that
this includes \hmedian{} (with $\ga=\frac12$)
and \hpercentile{\ga}, and that such $r(m)$ are large.

\begin{theorem}
  \label{TI1}
Suppose that $r(m)=\ga m+ O(1)$, where $0<\ga\le1$. 
Then, 
\begin{equation}\label{limW}
  M_n / n^\ga \asto W,
\end{equation}
for a random variable $W$, which can be represented as in \eqref{W} below.
Furthermore, all moments converge in \eqref{limW}, and, for every $s\ge0$,
\begin{align}\label{ti1}
  \E M_n^s/n^{\ga s} \to \E W^s 
&= \frac{\gG(s+1)}{\gG(s\ga+2)} \prodk \frac{1+s/k}{1+s\ga /r(k)}
%\\
%&= \frac{\gG(s+1)}{\gG(s\ga+2)} \prodk \frac{r(k)+sr(k)/k}{r(k)+s\ga}
<\infty.
\end{align}
\end{theorem}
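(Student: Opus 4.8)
The plan is to prove the analogous statement for the inverse function $N_m$, the number of candidates examined until $m$ are accepted, and then to transfer it to $M_n$ at the end via $M_n=\max\{m:N_m\le n\}$ and the monotonicity of both sequences; since $r(m)=\ga m+O(1)$ forces $N_m$ to be of order $m^{1/\ga}$, the passage between $N_m/m^{1/\ga}\asto\gL$ and $M_n/n^\ga\asto\gL^{-\ga}=:W$, with the corresponding moments, is routine. To study $N_m$ I take the candidate values to be \iid{} $\Uoi$ (permissible since the strategy depends only on ranks) and work with the gaps $1-v$ of the accepted values $v$. A short case analysis using $r(m+1)\le r(m)+1$ shows that the acceptance threshold is non-decreasing in $m$, so only the $r(m)$ best accepted values ever influence the future; writing $\gb^{(1)}_m\le\dots\le\gb^{(r(m))}_m$ for their gaps, this vector is a Markov chain in which at each acceptance one fresh variable $U_m\xsim\Uoi$ enters, the gap of the new accepted value is $T_mU_m$ with $T_m:=\gb^{(r(m))}_m$ the current threshold gap, and the vector is updated by inserting $T_mU_m$ and, unless $r$ has just increased, deleting its top entry. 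Conditionally on the past, the number of candidates examined to produce the $(m{+}1)$-st acceptance is geometric with mean $1/T_m$, so $N_m=\sum_{k=0}^{m-1}(N_{k+1}-N_k)$ with $\E[N_{k+1}-N_k\mid\cF_k]=1/T_k$; comparing conditional means indicates that the correct normalisation has $T_k$ of order $k^{-(1-\ga)/\ga}$ and that $W$ is a fixed constant times the a.s.\ limit of $T_k^\ga k^{1-\ga}$, which is the representation in \eqref{W}.

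There are then two things to do. First, \emph{almost sure convergence}: from the step-by-step description I would construct a non-negative martingale whose a.s.\ limit is $\lim_m N_m/m^{1/\ga}$ — at each acceptance the relevant functional of the order-statistic chain is multiplied by a conditionally mean-one factor once one divides by a deterministic compensator matching $1/T_k$ — and apply the martingale convergence theorem; alternatively one establishes $L^2$-convergence along $m=2^j$ and fills in using monotonicity of $N_m$. The point of the hypothesis $\sum_m r(m)\qww<\infty$ (which $r(m)=\ga m+O(1)$ implies) is exactly that these per-step fluctuations are summable, so the martingale is well behaved and its limit is essentially fixed by the first few candidates. Second, \emph{the moment formula}: unrolling the per-acceptance updates, the $s$-th moment picks up at acceptance $k$ a factor that, after $U_k$ is integrated out and $r(k)=\ga k+O(1)$ is used, equals $\dfrac{1+s/k}{1+s\ga/r(k)}$ up to a correction whose product over $k$ converges; telescoping gives the infinite product in \eqref{ti1}, while the prefactor $\gG(s+1)/\gG(s\ga+2)$ arises from matching $\prod_{k\le K}(1+s/k)\sim K^s/\gG(s+1)$ against the denominator product together with the $n^{\ga s}$ normalisation, and the product converges — so $\E W^s<\infty$ — precisely when $\sum_k\abs{s\ga/r(k)-s/k}<\infty$, i.e.\ when $\sum_m r(m)\qww<\infty$. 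The $L^{s'}$-bounds produced along the way for $s'>s$ give uniform integrability of $(M_n/n^\ga)^s$, so the a.s.\ convergence upgrades to convergence of the $s$-th moment, and \eqref{ti1} follows.

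The main obstacle is the analysis of the threshold Markov chain: the update of $T_m$ uses not only $T_m$ but also the lower entry $\gb^{(r(m)-1)}_m$, so the chain genuinely lives on vectors of growing dimension and does not reduce to a one-dimensional recursion, and both the martingale of the first step and the exact per-acceptance factor of the second require a sufficiently good grip on this chain to see that the normalised functional really converges and that the contributions multiply to the stated product. It is here that the regularity $r(m)=\ga m+O(1)$ is used: it makes the per-step discrepancies an $O(k\qww)$ perturbation of the exactly linear case $r(m)=\ga m$, in which every factor of the product equals $1$ and $\E W^s=\gG(s+1)/\gG(s\ga+2)$.
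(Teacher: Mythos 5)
Your high-level architecture (work with $N_m$ first, invert to $M_n$ at the end, get a.s.\ convergence plus uniform integrability for moment convergence) is parallel to the paper's, but the proposal leaves the central difficulty unresolved, and it is precisely the point where the paper's key idea enters. You flag as the ``main obstacle'' that the threshold-gap process lives on vectors of growing dimension and ``does not reduce to a one-dimensional recursion'', and you then ask the reader to believe that a mean-one martingale exists and that the $s$-th moment picks up an explicit factor $\frac{1+s/k}{1+s\ga/r(k)}$ per acceptance. In fact the obstacle dissolves, but only via a lemma you never state or prove: conditioned on the current threshold and on everything revealed so far, the $r(m)-1$ hidden accepted values above the threshold are \iid{} with the conditional law $\cL(X\mid X>Y_m)$ (proved by induction, revealing at each acceptance only the new minimum). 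With exponential values this gives, by lack of memory, that the threshold increments $Y_{m+1}-Y_m$ are \emph{independent} with $Y_{m+1}-Y_m\xsim\Exp(r(m))$ when $r(m+1)=r(m)$ and $=0$ otherwise (equivalently, in your uniform/gap picture, $1-Y_{m+1}=(1-Y_m)B_m$ with $B_m$ independent Beta$(r(m),1)$, so the gaps \emph{are} a product of independent factors). Everything you need downstream --- the a.s.\ limit of the centred threshold (a sum of independent mean-zero terms with variances $\le r(k)^{-2}$, summable exactly because $r(m)=\ga m+O(1)$), the explicit moment generating function of that limit, and hence the Gamma-product formula \eqref{ti1} --- flows from this independence, and none of it is available from the bare Markov-chain description you give. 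Without this step, the ``conditionally mean-one factor'' of your martingale and the ``per-acceptance factor after integrating out $U_k$'' are assertions, not computations: the newcomer's gap is $T_mU_m$, but the \emph{new} threshold gap is $\max\bigpar{\gb^{(r(m)-1)}_m,\,T_mU_m}$, and controlling that maximum is exactly what the conditional-\iid{} claim is for.

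Two further points are also more delicate than you indicate. The moment formula in \eqref{ti1} is not literally a telescoping of one-step conditional moment factors; in the paper it comes from $\E e^{uZ}$ for the limit $Z$ of the centred threshold, rewritten via the product formula for the Gamma function, and the constant $\gG(s+1)/\gG(s\ga+2)$ absorbs both the harmonic-sum correction in \eqref{eq:yk} and the normalisation of $\gl_m\sim c\,m^{1/\ga}$; your heuristic gives the right shape but no proof. And the uniform integrability is not ``produced along the way'': one needs uniform bounds on $\E\bigpar{M_n/n^{\ga}}^{K}$, which in the paper require bounding \emph{negative} moments of $N_m$ (via a continuous-time embedding, splitting off a $\gG(m)$-distributed factor, the finiteness of $\E e^{-KZ}$, and a Cauchy--Schwarz step); a martingale construction for $N_m/m^{1/\ga}$ would not by itself give these bounds. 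So as it stands the proposal is a plausible programme whose two substantive steps (the independence structure of the threshold and the quantitative moment bounds) are missing.
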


The moments in \eqref{ti1} can be explicitly computed in many cases, see
\cite{GaitherWard2012},
\refT{Tper} and Examples \ref{Emedian}--\ref{Epercentile}.

For our example result in the case of small $r(m)$, we assume that the
sequence
$r(m)$ is
regularly varying. 
(See e.g.\ \cite[p.~52]{RegVar} for definition,
and \cite{mixing} for definition of a mixing limit theorem.)

\begin{theorem}
  \label{TI2}
Assume that $r(m)$ is a regularly varying sequence such that $\sum_m
r(m)\qww=\infty$.
Let $\mu(n)$, $n\ge1$, be a sequence of real numbers such that
\begin{equation}\label{ti2mu}
  \sum_{k=1}^{\mu(n)}\frac{1}{r(k)} =\log n + O(1),
\end{equation}
and let
\begin{align}\label{ti2gb}
  \gb(m)^2&:=r(m)^2\sumkm \frac{1}{r(k)^2}
=\sumkm \frac{r(m)^2}{r(k)^2},
\\
\gam(n)&:=\gb\bigpar{\floor{\mu(n)}}.\label{ti2gam}
\end{align}
Then, as \ntoo,
\begin{equation}
  \label{ti2}
\frac{M_n-\mu(n)}{\gam(n)}\dto N(0,1).
\end{equation}
Furthermore, \eqref{ti2} is mixing.
\end{theorem}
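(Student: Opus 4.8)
The plan is to pass to the exponential model (candidate values \iid\ $\Exp(1)$), to prove the limit law first for the inverse quantity $N_m$ and then to invert; recall that $M_n\ge m\iff N_m\le n$. Let $\cF_m$ denote the information revealed up to and including the $m$-th acceptance, and let $T_m$ be the current threshold after $m$ acceptances, i.e.\ the $r(m)$-th largest of the $m$ accepted values, with $T_0:=0$. Conditionally on $\cF_m$, each subsequent candidate is accepted independently with probability $\P\bigpar{\Exp(1)>T_m}=e^{-T_m}$, so $\Delta_m:=N_{m+1}-N_m$ is geometric with mean $e^{T_m}$, hence concentrated around $e^{T_m}$ on the logarithmic scale. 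The recursion below shows that $T_m$ is non-decreasing; summing, $N_m=\sum_{k<m}\Delta_k$ is of order $e^{T_{m-1}}$ times a deterministic factor of order $r(m)$ (the number of indices $k<m$ with $T_{m-1}-T_k$ bounded), so $\log N_m=T_{m-1}+\ell(m)+\xi_m$ with $\ell$ deterministic, $\ell(m)=\log r(m)+O(1)$, and $\xi_m=\Op(1)$. It therefore suffices to prove a central limit theorem for the threshold sequence $(T_m)$.

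\textbf{The threshold CLT.} By memorylessness of $\Exp(1)$, an accepted candidate has value $T_m+E$ with $E\sim\Exp(1)$ independent of $\cF_m$; inserting this value into the sorted list of accepted values and using \eqref{eq:rr} gives $T_{m+1}=T_m$ when $r(m+1)=r(m)+1$, and $T_{m+1}=T_m+\min(S_m,E)$ when $r(m+1)=r(m)$, where $S_m$ is the gap between the $(r(m)-1)$-th and the $r(m)$-th largest accepted values. The vector of the top $r(m)-1$ gaps is a Markov chain driven by independent $\Exp(1)$'s (together with the deterministic moves of $r$). The key point is to show that when $r(m)$ is regularly varying this chain is quasi-stationary — it relaxes fast enough to track the slow drift of $r(m)$ — with $S_m$ asymptotically $\Exp\bigpar{r(m)-1}$; then the increment $\delta_m:=T_m-T_{m-1}$ is asymptotically $\Exp\bigpar{r(m)}$, and, crucially, $T_m-\sum_{k\le m}r(k)\qw$ is, up to boundary terms of bounded variance, a martingale with conditional quadratic variation $\etto\sum_{k\le m}r(k)\qww$. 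Since $\sum_m r(m)\qww=\infty$ this sum diverges, and regular variation supplies the Lindeberg condition, so the martingale central limit theorem yields $\bigpar{T_m-\sum_{k\le m}r(k)\qw}\big/\bigpar{\sum_{k\le m}r(k)\qww}\qq\dto N(0,1)$.

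\textbf{Inversion and mixing.} Writing $c_m:=\sum_{k\le m}r(k)\qw$ and $v_m:=\sum_{k\le m}r(k)\qww$, the two previous steps give $\log N_m=c_m+g(m)+v_m\qq\bigpar{Z+\op(1)}$ with $g$ deterministic and $Z\sim N(0,1)$, and $c_m$ is non-decreasing with increments $\le1$. From $M_n=\max\set{m:N_m\le n}$ and this representation, the standard delta-method for inverting a monotone weak limit theorem (as for the number of records, or for renewal counting processes) gives $\bigpar{M_n-\mu(n)}/\gam(n)\dto N(0,1)$, where $\mu(n)$ is any real sequence with $c_{\mu(n)}=\log n+O(1)$ and $\gam(n)=r\bigpar{\floor{\mu(n)}}\,v_{\floor{\mu(n)}}\qq=\gb\bigpar{\floor{\mu(n)}}$. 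Here regular variation of $r$ is used to check that $r(m)^2=\etto r\bigpar{\floor{\mu(n)}}^2$ and $v_m=\etto v_{\floor{\mu(n)}}$ for $m$ within $O(1)$ of $\mu(n)$ (so $\gam$ is well defined up to the stated freedom in $\mu$), and that the deterministic term $g$ shifts $M_n$ by only $O\bigpar{r(\mu(n))\log r(\mu(n))}=o\bigpar{\gam(n)}$, since $\gam(n)=r(\mu(n))\gb(\mu(n))$ with $\gb(m)\ge\sqrt m\to\infty$. Finally, mixing: since $v_m\to\infty$, the first $k$ increments $\delta_1,\dots,\delta_k$ contribute only $\Op(1)=\op\bigpar{v_m\qq}$ to the martingale, and conditioning on $\cF_k$ perturbs $\bigpar{M_n-\mu(n)}/\gam(n)$ by at most $\Op\bigpar{r(\mu(n))/\gam(n)}=\Op\bigpar{1/\gb(\mu(n))}=\op(1)$; hence the normalized variable is asymptotically independent of every $\cF_k$, and so — as $\bigcup_k\cF_k$ generates the underlying $\sigma$-field — of every fixed event, which is exactly the assertion that \eqref{ti2} is mixing (see \cite{mixing}).

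\textbf{Main obstacle.} The hard part is the central limit theorem for $(T_m)$ in the second step: although its one-step recursion is elementary, one must control the Markov chain of top gaps tightly enough to show that it stays quasi-stationary as $r(m)$ drifts and that the per-step mean and variance are exactly $r(m)\qw$ and $\etto r(m)\qww$ with asymptotically uncorrelated increments. This is where regular variation of $r(m)$ is indispensable — without it the gap chain need not keep up with the drift and $M_n$ can fail to be asymptotically normal, as in the counterexample \refE{Eirreg}. (Alternatively, one can simply quote the general small-$r(m)$ limit theorem for $N_m$ established in \refS{Ssmall} and carry out only the inversion of the third step.)
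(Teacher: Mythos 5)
Your overall architecture --- a CLT for the threshold, the relation $\log N_m=Y_m+\log r(m)+\Op(1)$, inversion via $M_n\ge m\iff N_m\le n$, and a mixing argument --- is the same as the paper's, but the step you yourself single out as the main obstacle, the CLT for the threshold sequence, is left unproven, and the route you propose for it is aimed in the wrong direction. There is no quasi-stationary gap chain to control: by the lack-of-memory property of the exponential distribution, conditionally on everything revealed so far the $r(m)-1$ accepted values above the current threshold are \iid{} copies of (threshold $+\,\Exp(1)$), so the threshold increments are \emph{exactly} independent, with $Y_{m+1}-Y_m\xsim\Exp(r(m))$ when $r(m+1)=r(m)$ and $=0$ otherwise (this is \refL{L2}); the CLT for $Y_m$ is then an ordinary Lyapounov CLT for sums of independent variables (\refL{LD}) and holds for \emph{every} small $r(m)$, with no regularity assumption at all. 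Consequently your claim that regular variation is indispensable for the gap-chain step, with \refE{Eirreg} as evidence, mislocates the difficulty: in that example the log-normal limit for $N_m$ (\refT{TNsmall}) holds perfectly well, and what breaks is the \emph{inversion}, because $r(m)$ and $\hgs_m$ at $m=\mu(n)+x\gam(n)$ need not be asymptotic to their values at $\mu(n)$. Regular variation is thus needed exactly in the step you dismiss as a standard delta-method (in the paper, \refL{LR}: $\gb(m)\ge m\qq$, $\gb(m)=o(m)$, and $r(\xm)\sim r(m)$, $\hgs_{\xm}\sim\hgs_m$ when $\xm\sim m$); moreover you only verify stability of $r$ and $v$ for $m$ within $O(1)$ of $\mu(n)$, whereas the inversion requires it for $|m-\mu(n)|=O(\gam(n))$, a range that is at least of order $\mu(n)\qq$. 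As it stands, the core of the proof is an unexecuted program (quasi-stationarity plus a martingale CLT) justified by the wrong hypothesis; quoting \refT{TNsmall} instead, as your final parenthesis suggests, would be legitimate but then the remaining inversion is precisely where your details are thinnest.

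There is also a centering error that would bite in that inversion. You assert that $T_m-\sum_{k\le m}r(k)\qw$ is, up to boundary terms of bounded variance, a martingale; in fact $\E T_m=\sum_{k\le m}\gd_k/r(k)=\sum_{k\le m}r(k)\qw-\sum_{\ell=2}^{r(m)}\ell\qw$ by \eqref{eq:yk}, so you have dropped a deterministic drift of size $\log r(m)$, which is unbounded whenever $r(m)\to\infty$. Correspondingly, your claim that the deterministic term $g$ (of size $\log r$) shifts $M_n$ by only $o(\gam(n))$ would require $\log r(m)=o(\hgs_m)$, which is false in general: for $r(m)=\floor{\sqrt m}$ one has $\hgs_m\sim(\log m)\qq$ while $\log r(m)\sim\tfrac12\log m$. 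The two slips cancel if the bookkeeping is done as in the paper --- the correct centering of $\log N_m$ is $y_m+\log r(m)=\sum_{k\le m}r(k)\qw+O(1)$, after which only $O(1)=o(\hgs_m)$ errors remain --- but as written the argument is inconsistent. Two smaller points: the relation $N_m=r(m)e^{Y_m+\Op(1)}$ is itself not immediate (you give only the lower-bound heuristic; the matching upper bound and the concentration of the random sum are the content of \refL{LC1} and \refL{LC2}), and $\gam(n)=\gb(\floor{\mu(n)})=r(\floor{\mu(n)})\hgs_{\floor{\mu(n)}}$, not $r(\mu(n))\gb(\mu(n))$ as stated near the end of your inversion step.
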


\refS{Sgeneral} contain some basic general results. Then
\refT{TI1} and related results for large $r(m)$ are  proved in
\refSs{Slarge}--\ref{Slarge++}, while \refT{TI2} and related results for
small $r(m)$ are  proved in \refS{Ssmall}. \refS{Sex} contains some
examples.
The remaining sections consider some additional properties that have been
considered by previous authors.
\refS{Scond} consider results conditioned on the value of the first candidate.
\refS{Sgaps} treat the probability of accepting the next candidate, and also
the number of unsuccessful candidates since the last accepted one.
\refS{Saccepted} studies the distribution of the accepted values.

\begin{remark}
Some previous papers, in particular
\cite{HelmiP-median2013,HelmiMP-best2014},
contain also interesting exact results on the exact distribution of $M_n$
for finite $n$. We do not consider such results here.
\end{remark}

\begin{ack}
This research was begun during a lecture by Conrado Mart\'inez at the 
conference celebrating the 60th birthday of Philippe Flajolet in December
2008;
Conrado talked about the hiring problem, and I got the basic idea of the method
used here, and made some notes. It then took me almost 10 years to return to
the problem and develop the ideas in the notes, with further inspiration
from papers by 
Conrado and others that were written in the meantime.
I thank Conrado Mart\'inez for an inspiring lecture, and I find it fitting
to dedicate this paper to the memory of Philippe Flajolet.
\end{ack}

\section{Notation}
 $\Exp(a)$ denotes the exponential distribution with \emph{rate} $a$.
In other words, if $X\xsim\Exp(a)$, then $\P(X>x)=e^{-ax}$ for $x\ge0$;
equivalently, $aX\xsim\Exp(1)$. Hence, $\E X=1/a$.

$\Ge(p)$ denotes the Geometric distribution started at 1 (also called First
Success distribution), with
$\P(X=n)=p(1-p)^{n-1}$, $n\ge1$.

$E$, $E_i$, $E'_j$ and so on  
will always denote $\Exp(1)$ variables, independent of each
other.

$\gam$ is Euler's constant.

$\asto$, $\pto$ and $\dto$ denote convergence almost surely (a.s.), 
in probability and in distribution, respectively, for random variables.

$a_m\sim b_m$, where $a_m$ and $b_m$ are real numbers, means
$a_m/b_m\to1$ as \mtoo. 
Furthermore, assuming $b_m>0$, 
$a_m=O(b_m)$ means $\sup_m |a_m|/b_m<\infty$
and $a_m=o(b_m)$ means $a_m/b_m\to0$ as \mtoo.
Moreover, if $X_m$ are random variables and $b_m>0$ are real numbers, then
$X_m=\Op(b_m)$ means that the sequence $|X_m|/b_m$ is stochastically bounded
(tight), i.e., $\sup_m\P(|X_m|/b_m>K)\to 0$ as $K\to\infty$,
and $X_m=\op(b_m)$ means $X_m/b_m\pto0$ as \mtoo.
We sometimes omit `as \mtoo' when clear from the context.

With high probability (\whp) means with probability $1-o(1)$ (as, \eg,
$n\to\infty$).

$C$ and $c$ are used for unspecified constants, which may vary from one
ocurrence to another. For constants that depend on some parameter (but not
on other variables such as $m$ or $n$), we similarly use \eg{} $C_K$ and
$c(\gd)$. 

If $x\in\bbR$, then 
$\floor{x}:=\max\set{n\in\bbZ: n\le x}$
and
$\ceil{x}:=\min\set{n\in\bbZ: n\ge x}$.

\section{General limit theorems}\label{Sgeneral}

We begin by formalising the hiring strategy discussed in \refS{S:intro}, 
at the same time introducing some further notation.
Recall that for a rank-based strategy, the result does not depend on the
(continuous) distribution of the values of the candidates.
We choose this distribution to be exponential.

Thus, let  $X_1,X_2,\dots$ be \iid{} random variables with
$X_i\xsim\Exp(1)$, representing the values of the candidates. 
We assume without further mention that these values are
distinct (which happens \as{}), so we ignore the possibility of ties below.
 For convenience we
identify a candidate and her value; we will thus say both 'candidate
$n$ is accepted' and 'value $X_n$ is accepted'.

Let $N_m$ be the index of the $m$-th accepted candidate,
and denote the $m$-th accepted  value by $\XX_m:=X_{N_m}$.
Conversely, let $M_n$ be the number of candidates accepted among $1,\dots,n$.
Thus, 
\begin{equation}
  \label{eq:NM}
%  M_n=m \iff N_m\le n<N_{m+1}.
M_n\ge m \iff N_m\le n.
\end{equation}

The hiring strategy is defined by a given function
$r:\bbZ_{\ge0}\to\bbZ_{>0}$ satisfying
%$r:\set{0,1,\dots}\to\set{1,2,\dots}$
\eqref{eq:rr}, and thus, in particular,
$1\le r(m)\le m+1$.

The basic rule of the strategy is that 
if $m\ge0$ values have been accepted so far, then the next value is
accepted if and only if it exceeds a threshold 
$Y_m$, which is the $r(m)$-th largest value among
the $m$ values already accepted, interpreted as $Y_m:=0$ when $r(m)=m+1$.
(In particular, $Y_0=0$.)

\begin{remark}
It is easy to see 
\cite{KriegerPS-rank2007}
that this threshold $Y_m$
is the same as the $r(m)$-th best value of all candidates seen so far, 
since all previous candidates with values at or above $Y_m$ were
accepted. If $r(m)\le m$, then the strategy is thus to accept a candidate if
her value is among the $r(m)$ best of all values seen so far (including her
own). It follows by symmetry that conditioned on $M_n=m$, and on everything
else that has happened earlier, the probability
that candidate $n+1$ is accepted equals $r(m)/(n+1)$, see
\cite{KriegerPS-rank2007}.
\end{remark}

The threshold $Y_m$ is thus updated when a new value is accepted. This is
described by the following lemma which is simple but basic for our analysis.
In particular, note that $Y_m$ never decreases.

\begin{lemma}
  \label{L1}
  \begin{romenumerate}
  \item 
If\/ $r(m+1)=r(m)+1$, then $Y_{m+1}=Y_m$.
\item     
If\/ $r(m+1)=r(m)$, then $Y_{m+1}$ is the smallest of the $r(m)$ values
that are larger than $Y_m$ among the selected values $\XX_1,\dots,\XX_{m+1}$.
  \end{romenumerate}
\end{lemma}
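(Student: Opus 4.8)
The plan is to prove both parts at once, via a single counting observation: \emph{among the $m+1$ accepted values $\XX_1,\dots,\XX_{m+1}$, exactly $r(m)$ of them are strictly larger than the old threshold $Y_m$.}

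To see this, first note that $\XX_{m+1}$ was accepted at the stage when $m$ values had already been accepted, so $\XX_{m+1}>Y_m$ (values being distinct). Next count how many of $\XX_1,\dots,\XX_m$ exceed $Y_m$. If $r(m)\le m$, then by definition $Y_m$ is the $r(m)$-th largest of these $m$ values and is one of them, so exactly $r(m)-1$ of $\XX_1,\dots,\XX_m$ are strictly larger than $Y_m$, and $Y_m$ itself is the $r(m)$-th largest of $\XX_1,\dots,\XX_m$, hence the $\bigpar{r(m)+1}$-th largest of $\XX_1,\dots,\XX_{m+1}$. If $r(m)=m+1$, then $Y_m=0$ and all $m=r(m)-1$ of $\XX_1,\dots,\XX_m$ exceed $Y_m$. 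Either way, adjoining $\XX_{m+1}>Y_m$ gives exactly $r(m)$ values among $\XX_1,\dots,\XX_{m+1}$ that exceed $Y_m$; and since the remaining accepted values are all $\le Y_m$, these are precisely the $r(m)$ largest of $\XX_1,\dots,\XX_{m+1}$.

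Given this, (i) and (ii) follow immediately from the definition of $Y_{m+1}$ as the $r(m+1)$-th largest of $\XX_1,\dots,\XX_{m+1}$ (or $0$ when $r(m+1)=m+2$). For (i), $r(m+1)=r(m)+1$: if $r(m)\le m$ then $r(m+1)\le m+1$, and by the count $Y_{m+1}$ is the $\bigpar{r(m)+1}$-th largest of $\XX_1,\dots,\XX_{m+1}$, which equals $Y_m$; if $r(m)=m+1$ then $r(m+1)=m+2$, so $Y_{m+1}=0=Y_m$ by the convention. For (ii), $r(m+1)=r(m)$ (so $r(m+1)\le m+1$ and $Y_{m+1}$ is a genuine order statistic): then $Y_{m+1}$ is the $r(m)$-th largest of $\XX_1,\dots,\XX_{m+1}$, which, there being exactly $r(m)$ of these values exceeding $Y_m$, is the smallest of those $r(m)$ values, as claimed; this reading is consistent also when $r(m)=m+1$, since then all $m+1$ values exceed $Y_m=0$.

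I do not expect any real obstacle; the only point requiring care is keeping the boundary case $r(m)=m+1$ — where $Y_m=0$ is a convention rather than an actual order statistic, and $Y_m$ is not one of the accepted values — in line with the rest. This is why I would phrase the key count in the uniform form ``exactly $r(m)-1$ of $\XX_1,\dots,\XX_m$ exceed $Y_m$'', and deduce from it ``exactly $r(m)$ of $\XX_1,\dots,\XX_{m+1}$ exceed $Y_m$'', rather than arguing directly with the rank of $Y_m$, which would tacitly assume $r(m)\le m$.
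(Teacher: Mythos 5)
Your proof is correct and follows essentially the same route as the paper's: both rest on the observation that exactly $r(m)$ of the accepted values $\XX_1,\dots,\XX_{m+1}$ exceed $Y_m$ (namely the $r(m)-1$ previously accepted values above $Y_m$ together with $\XX_{m+1}$), from which (i) and (ii) are read off from the definition of $Y_{m+1}$. The only difference is bookkeeping at the boundary: the paper absorbs the case $r(m)=m+1$ by setting the $(m+1)$-th order statistic to $0$ so that $Y_m$ is always the $r(m)$-th largest accepted value, whereas you handle it by an explicit case split, which is equally fine.
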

\begin{proof}
  Let $m$ be fixed and order the accepted values
 $\XX_1,\dots,\XX_m$
in decreasing order as $\XXo{1}>\XXo{2}>\dots>\XXo{m}$;  
define further $\XXo{m+1}:=0$. Then $Y_m=\XXo{r(m)}$.
By assumption, $\XX_{m+1}>Y_m$. 
Thus there are in the set $\set{\XX_1,\dots,\XX_{m+1}}$ 
exactly $r(m)$  values that are larger than $Y_m=\XXo{r(m)}$,
\viz{} $\set{\XXo{i}:1\le i< r(m)}\cup\set{\XX_{m+1}}$.
Hence, if $r(m+1)=r(m)$, then $Y_{m+1}$ is the smallest of these values,
while if $r(m+1)=r(m)$, then $Y_{m+1}$ is the next smaller accepted
value (or 0),
which is $\XXo{r(m)}=Y_m$.
\end{proof}

So far, the argument has been deterministic. We now use our assumption
that the values $X_i$ are \iid{} random variables as above; 
this is where the choice of exponential distribution 
is convenient and greatly simplifies the argument.

\begin{lemma}\label{L2}
  Assume as above that $X_1,X_2,\dots$ are \iid{} and $\Exp(1)$.
Then, the increments $Y_{m+1}-Y_m$, $m\ge0$, are independent random
variables with
\begin{equation}\label{l2}
  Y_{m+1}-Y_m\xsim
  \begin{cases}
    \Exp\bigpar{r(m)}, & r(m+1)=r(m), \\
0, & r(m+1)=r(m)+1.
  \end{cases}
\end{equation}
\end{lemma}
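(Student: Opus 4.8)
The plan is to peel off, one accepted value at a time, the memorylessness of the exponential distribution, exactly along the case split already set up in \refL{L1}. Fix $m\ge0$ and condition on the entire history up to the moment the $m$-th candidate has been accepted — in particular on the values $\XX_1,\dots,\XX_m$ and hence on the current threshold $Y_m$. If $r(m+1)=r(m)+1$, then by \refL{L1}(i) we have $Y_{m+1}=Y_m$ deterministically, which is the second line of \eqref{l2}; nothing probabilistic happens and this increment is the constant $0$, trivially independent of everything. So the only work is the case $r(m+1)=r(m)$.

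In that case, \refL{L1}(ii) tells us $Y_{m+1}$ is the smallest of the $r(m)$ accepted values exceeding $Y_m$ (among $\XX_1,\dots,\XX_{m+1}$). The key observation is that, conditioned on the history up to acceptance number $m$, the next accepted value $\XX_{m+1}$ is the value of the first future candidate whose (i.i.d.\ $\Exp(1)$) value exceeds $Y_m$; by the lack-of-memory property of the exponential, $\XX_{m+1}-Y_m\xsim\Exp(1)$ and is independent of the past. Now among the $r(m)$ accepted values lying strictly above $Y_m$ — these are $\XXo1>\dots>\XXo{r(m)-1}$ from the old ones, together with the fresh $\XX_{m+1}$ — the minimum is either $\XXo{r(m)-1}$ (the old $(r(m)-1)$-th largest, equivalently the old value of $Y_{m-1}$-type quantity sitting just above $Y_m$) or $\XX_{m+1}$, whichever is smaller. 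The gap $Y_{m+1}-Y_m$ is therefore $\min\bigl(D,\;\XX_{m+1}-Y_m\bigr)$, where $D:=\XXo{r(m)-1}-Y_m$ is determined by the history and $\XX_{m+1}-Y_m\xsim\Exp(1)$ is independent of it. Since $D$ is itself of the form $Y_j-Y_{j'}$ for earlier indices, an induction (or a direct unwinding) shows $D$ is a minimum of $r(m)-1$ independent $\Exp(1)$ gaps contributed by the earlier accepted values still sitting above $Y_m$, hence $D\xsim\Exp\bigpar{r(m)-1}$; taking the further minimum with an independent $\Exp(1)$ gives $Y_{m+1}-Y_m\xsim\Exp\bigpar{r(m)}$, as claimed. (Alternatively, and more cleanly, one notes that the $r(m)$ accepted values above $Y_m$, shifted by $-Y_m$, are, conditionally, the $r(m)$ order statistics of the overshoots of the i.i.d.\ exponential stream above the level $Y_m$, which by memorylessness are distributed as $r(m)$ independent $\Exp(1)$ variables; their minimum is $\Exp\bigpar{r(m)}$.)

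For the independence of the whole family $\{Y_{m+1}-Y_m\}_{m\ge0}$: at each step the new increment was shown to be a function of $\XX_{m+1}-Y_m\xsim\Exp(1)$ together with history-measurable data, and $\XX_{m+1}-Y_m$ is independent of that history. Running this over $m=0,1,2,\dots$ and using that the fresh $\Exp(1)$ overshoots at successive acceptance times are mutually independent (they come from disjoint blocks of the i.i.d.\ sequence $X_1,X_2,\dots$), a standard martingale/successive-conditioning argument gives joint independence of the increments. The main obstacle is purely bookkeeping: making precise the claim that, conditioned on the state after $m$ acceptances, the $r(m)$ old accepted values above $Y_m$ behave like $r(m)$ i.i.d.\ $\Exp(1)$ overshoots — this is where the exponential choice does all the work, and it should be isolated as a clean sub-claim (essentially: the overshoots of an i.i.d.\ exponential stream above any stopping-time level are again i.i.d.\ $\Exp(1)$), after which \eqref{l2} and the independence follow mechanically.
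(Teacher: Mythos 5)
Your plan is on the right track --- the ``more cleanly'' parenthetical and your closing paragraph describe essentially the paper's own argument --- but as written there is a genuine gap, and it sits exactly at the point you dismiss as bookkeeping. First, your main computation is carried out under an inconsistent conditioning. You condition ``on the entire history, in particular on the values $\XX_1,\dots,\XX_m$''. Under that conditioning the quantity $D:=\XXo{r(m)-1}-Y_m$ is a deterministic number, so it cannot be $\Exp\bigpar{r(m)-1}$, and the conditional law of $Y_{m+1}-Y_m=\min\bigpar{D,\,\XX_{m+1}-Y_m}$ is that of the minimum of a constant and an independent $\Exp(1)$ variable, which is not $\Exp\bigpar{r(m)}$. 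The content of the lemma is that with respect to a \emph{coarser} conditioning --- one that records the acceptance decisions and the successive thresholds $Y_0,\dots,Y_m$ but keeps the remaining $r(m)-1$ accepted values above $Y_m$ hidden --- the increment is conditionally $\Exp\bigpar{r(m)}$ and independent of the past; your argument silently switches between the two conditionings, and the same issue affects your independence paragraph, since the increment is not a function of the fresh overshoot and coarse-filtration data alone but also of the hidden gap $D$.

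Second, the sub-claim you isolate at the end (conditioned on what has been revealed after $m$ acceptances, the $r(m)-1$ hidden accepted values above $Y_m$ are \iid{} with law $\cL(X\mid X>Y_m)$) is not ``purely bookkeeping'': it is the heart of the paper's proof of \refL{L2}, where it is proved by induction on $m$. It does not follow directly from ``overshoots of an \iid{} exponential stream above a level are \iid{} $\Exp(1)$'', because the hidden values were not sampled above the level $Y_m$: each was accepted above some earlier, lower threshold, and the information that it exceeds $Y_m$ comes from the successively revealed minima. The induction step needs precisely the elementary fact that if $r$ values are conditionally \iid{} with law $\cL(X\mid X>y)$ and one reveals their minimum $y'$, then the remaining $r-1$ are conditionally \iid{} with law $\cL(X\mid X>y')$, combined with the lack-of-memory property. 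Once that claim is stated for the coarse filtration and proved, both your $\min(D,\cdot)$ route and the order-statistics route give \eqref{l2} and the asserted independence; without it, the proof is incomplete.
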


\begin{proof}
  Run the hiring process as above, but keep the values $X_n$ 
secret as long as possible, revealing only enough to determine whether to
accept $X_n$ or not, and to determine the next threshold $Y_m$.
To be precise, when a new candidate $n$ is
examined, reveal first only  whether her value $X_n$ 
is larger than the current threshold $Y_m$ or not. If not, we forget this
candidate and move on to the next.
Suppose instead that $X_n>Y_m$, so that we accept $n$.
Then we also have to update $Y_m$.
By \refL{L1}, this is trivial if $r(m+1)=r(m)+1$.
However, if $r(m+1)=r(m)$, then there are
$r(m)$ accepted candidates (including the latest recruit, $n$) that have
values $>Y_m$. We now reveal the minimum of these values, giving $Y_{m+1}$,
but we do not reveal the remaining $r(m)-1$ of them.
  
\begin{claim}
  Conditioned on $Y_m=y$ and on everything else
that has been revealed so far, the $r(m)-1$ 
(still hidden)
accepted values that are larger
than $Y_m$ 
have the distribution of $r(m)-1$ \iid{} random variables with
the distribution $\cL(X\mid X>y)$.
\end{claim}

To show the claim, we use induction on $m$.
We condition on $Y_m=y$ and everything else that has been
revealed so far, and 
note that when we accept the next $X_n$, we know just that
$X_n>y$, so $X_n$ too has the  distribution  $\cL(X\mid X>y)$.
Hence, by the induction hypothesis, the $r(m)$ accepted values that are
larger than $Y_m=y$ are $r(m)$ (conditionally)
independent random variables with this distribution.
By \refL{L1}, this completes the induction step when $r(m+1)=r(m)+1$;
otherwise we reveal the minimum $Y_{m+1}$ of them, and note that conditioned
on $Y_{m+1}=y'>y$, the remaining $r(m)-1=r(m+1)-1$ variables are \iid{} with
the distribution $\cL(X\mid X>y')$. 

This proves the claim.
Furthermore, 
since $X\xsim\Exp(1)$, this distribution $\cL(X\mid X>y)$
is the same as the distribution of $X+y$. 
(The standard lack-of-memory property of exponential distributions.)
Hence, if $r(m+1)=r(m)$, then
the claim and its proof yield that,
conditioned on $Y_m=y$ and everything else revealed so far,
\begin{equation}
Y_{m+1}= \min_{1\le j\le r(m)}  (E_j+y)
=Y_m + \min_{1\le j\le r(m)} E_j,
\end{equation}
where $E_1,E_2,\dots$ are \iid{} and $\Exp(1)$.
In particular, $Y_{m+1}-Y_m$ is independent of $Y_1,\dots,Y_m$.
Furthermore, \eqref{l2} holds, since
$
\min_{1\le j\le r(m)} E_j %\eqd r(m)\qw E_1
\xsim\Exp(r(m))$.
\end{proof}

Let
\begin{equation}
  \label{gd}
\gd_m:=\bigindic{r(m)=r(m-1)} 
%= 1-\bigpar{r(m)-r(m-1)}.
= 1+r(m-1)-r(m).
\end{equation}

\refL{L2} yields the following representation of $Y_m$.

\begin{lemma}\label{LY}
  There exists a sequence $E_1,E_2,\dots$ of \iid{} $\Exp(1)$ \rv{s} such
  that
  \begin{equation}
    \label{eq:ly}
    Y_m=\sum_{k=1}^m \frac{\gd_k}{r(k)} E_k,
\qquad m\ge0.
  \end{equation}
\end{lemma}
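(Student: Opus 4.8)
The plan is to read the representation \eqref{eq:ly} straight off \refL{L2}. Set $\Delta_k:=Y_k-Y_{k-1}$ for $k\ge1$; since $Y_0=0$ we have $Y_m=\sum_{k=1}^m\Delta_k$ for every $m\ge0$, so it suffices to construct \iid{} $\Exp(1)$ variables $E_1,E_2,\dots$ with $\Delta_k=\frac{\gd_k}{r(k)}E_k$ for each $k$.

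By \refL{L2}, applied with $m=k-1$, the increments $\Delta_k$, $k\ge1$, are independent; moreover $\Delta_k\xsim\Exp\bigpar{r(k-1)}$ when $r(k)=r(k-1)$, and $\Delta_k=0$ when $r(k)=r(k-1)+1$. By \eqref{gd} the first alternative is exactly the event $\gd_k=1$, in which case $r(k)=r(k-1)$ and hence $\Delta_k\xsim\Exp\bigpar{r(k)}$; the second alternative is $\gd_k=0$. When $\gd_k=1$ I would therefore put $E_k:=r(k)\Delta_k$, which is $\Exp(1)$ since $aZ\xsim\Exp(1)$ whenever $Z\xsim\Exp(a)$, and then $\frac{\gd_k}{r(k)}E_k=\Delta_k$, as wanted. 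When $\gd_k=0$ the increment $\Delta_k$ is identically $0$ and carries no information, so there is nothing to extract; here I would simply take $E_k$ to be an $\Exp(1)$ variable independent of everything else --- enlarging the probability space if needed --- and then $\frac{\gd_k}{r(k)}E_k=0=\Delta_k$ as well.

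With these choices $Y_m=\sum_{k=1}^m\Delta_k=\sum_{k=1}^m\frac{\gd_k}{r(k)}E_k$ for every $m\ge0$, which is \eqref{eq:ly}. It remains to check that $(E_k)_{k\ge1}$ is an \iid{} $\Exp(1)$ sequence: each $E_k$ has the $\Exp(1)$ law by construction; the $E_k$ with $\gd_k=1$ are mutually independent because the corresponding $\Delta_k$ are (by \refL{L2}) and each such $E_k$ is a deterministic function of the single variable $\Delta_k$; and the $E_k$ with $\gd_k=0$ are independent of each other and of everything else by construction. I do not expect any genuine obstacle here; the only point deserving a word of care is the harmless enlargement of the probability space used to supply the dummy variables $E_k$ at the steps where $\gd_k=0$ (alternatively one notes that those $E_k$ never enter the sum in \eqref{eq:ly}, so their joint law may be prescribed freely).
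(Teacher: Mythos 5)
Your proposal is correct and is essentially the paper's own argument: the paper likewise sets $E_k:=r(k)(Y_k-Y_{k-1})$ when $\gd_k=1$ (noting $r(k)=r(k-1)$ there) and takes $E_k\xsim\Exp(1)$ arbitrary but independent of everything else when $\gd_k=0$, invoking \refL{L2} for independence and the exponential law of the increments. Your write-up merely spells out the routine details (enlarging the probability space, checking the joint law) that the paper leaves implicit.
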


\begin{proof}
  An immediate consequence of \refL{L2}, with 
  \begin{equation}
E_k:=r(k)(Y_k-Y_{k-1})=r(k-1)(Y_k-Y_{k-1})\xsim\Exp(1)
  \end{equation}
  when $\gd_k=1$, and $E_k\xsim \Exp(1)$ arbitrary but independent of
  everything else when $\gd_k=0$.
\end{proof}

We are now prepared for a theorem giving an exact representation of the
sequence $N_m$.

\begin{theorem}\label{TN1}
The sequence $N_m$, $m\ge0$, is given by
  \begin{equation}\label{tn1}
    N_m=\sum_{k=0}^{m-1} V_k,
  \end{equation}
where, conditioned on the sequence $(Y_m)_1^\infty$ given by \refL{LY}, the
random variables
$V_k$ are independent with $V_k\xsim \Ge(e^{-Y_k})$.
\end{theorem}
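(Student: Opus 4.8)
The plan is to reduce the theorem to a statement about conditional laws and then prove it by running the same ``lazy revealing'' coupling used in the proof of \refL{L2}, carried one step further so that it also keeps track of the waiting times between successive acceptances. First I would set $V_k:=N_{k+1}-N_k$ for $k\ge0$, with $N_0:=0$; then \eqref{tn1} is merely the telescoping identity $N_m=\sum_{k=0}^{m-1}V_k$, and the whole content of the theorem is the assertion that, conditionally on $\sigma(Y_m:m\ge1)$ (equivalently on $(Y_m)_0^\infty$, since $Y_0=0$), the $V_k$ are independent with $V_k\xsim\Ge(e^{-Y_k})$. Note that this forces $V_0=1$ deterministically, consistent with $Y_0=0$ and with $\Ge(1)$ being the point mass at $1$.

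Next I would run the hiring process exactly as in the proof of \refL{L2}: examine candidate $n$ only after $1,\dots,n-1$, and while the current threshold is $Y_m$ reveal for each new candidate only the event $\{X_n>Y_m\}$; when the first acceptance occurs (that candidate being $N_{m+1}$), reveal the updated threshold $Y_{m+1}$ via \refL{L1}, keeping the other accepted values above $Y_m$ hidden. The core of the argument is an induction on $m$ that strengthens the ``Claim'' in that proof: conditionally on everything revealed through the value $Y_m$, I would assert that (a) the $r(m)-1$ still-hidden accepted values exceeding $Y_m$ are \iid{} with law $\cL(X\mid X>Y_m)$ (the original Claim); (b) the not-yet-examined candidates $X_{N_m+1},X_{N_m+2},\dots$ are \iid{} $\Exp(1)$ and independent of everything revealed and of the hidden values of (a); and (c) the increments $V_0,\dots,V_{m-1}$ are, given $(Y_0,\dots,Y_m)$, independent with $V_k\xsim\Ge(e^{-Y_k})$ and jointly independent of the hidden values of (a). The base case $m=0$ is immediate (nothing revealed, no hidden values since $r(0)-1=0$, no $V_k$'s). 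In the induction step, (b) shows that $V_m$ is the first index $j\ge1$ with $X_{N_m+j}>Y_m$, so conditioned on $Y_m=y$ it is $\Ge(e^{-y})$; the threshold update is precisely the corresponding step of \refL{L2}, which delivers (a) at stage $m+1$, and (b) at stage $m+1$ holds because the candidates after $N_{m+1}$ have not been touched.

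The step I expect to be the main obstacle is propagating the independence in (c): one must check that $V_m$ is independent not only of the past but also of the entire future $Y_{m+1},Y_{m+2},\dots$, given $(Y_0,\dots,Y_m)$. I would handle this with two elementary properties of an \iid{} $\Exp(1)$ sequence. By lack of memory, conditioned on $\{X>y\}$ the overshoot $X-y$ is again $\Exp(1)$; and for such a sequence the index of the first value exceeding $y$ is independent of the value attained there — both are read off the product identity $\P(\text{first hit at }j,\ \text{overshoot}>t\mid Y_m=y)=(1-e^{-y})^{j-1}e^{-y}e^{-t}$. Hence, conditionally on $Y_m$, the accepted value $X_{N_{m+1}}$ has exactly the hidden-value distribution of (a) and is independent of $V_m$; since $Y_{m+1}$ — and therefore, recursively, everything revealed afterwards — is a deterministic function of $Y_m$, the $r(m)-1$ old hidden values, and $X_{N_{m+1}}$, none of which carries information about $V_m$ beyond $Y_m$, conditioning further on $Y_{m+1}$ leaves $V_m$ conditionally $\Ge(e^{-Y_m})$ and independent of $V_0,\dots,V_{m-1}$. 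Combining this with the induction hypothesis gives (c) at stage $m+1$, and letting $m\to\infty$ yields the theorem.
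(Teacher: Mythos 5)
Your proposal is correct and follows essentially the same route as the paper: both extend the revealing argument of \refL{L2}, obtain the conditionally geometric waiting time $N_{m+1}-N_m\xsim\Ge(e^{-Y_m})$, and use the independence of that waiting time from the value of the next accepted candidate (hence from the threshold increments) to pass to conditioning on the whole sequence $(Y_m)_1^\infty$. One phrase is slightly loose --- $Y_{m+2},Y_{m+3},\dots$ are not deterministic functions of $Y_m$, the hidden values and $X_{N_{m+1}}$ alone, since they also involve candidates examined after $N_{m+1}$ --- but your clause (b) already gives the conditional independence of those later candidates from $V_m$, so the argument stands as intended.
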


\begin{proof}
Fix $m$ and condition on $Y_1,\dots,Y_m$ and $N_1,\dots,N_m$. 
Each new candidate after $N_m$ has probability 
$\P(X_n>Y_m\mid Y_m)=e^{-Y_m}$ of exceeding the
threshold $Y_m$, and these events are independent.
Hence, still conditioned on the past,
\begin{equation}\label{nge}
  N_{m+1}-N_m\xsim\Ge\bigpar{e^{-Y_m}}.
\end{equation}
Furthermore, still conditioned on the past, this waiting time $N_{m+1}-N_m$
is independent of the value of the next accepted candidate $\XX_{m+1}$.
Hence, the argument in the proof of \refL{L2} extends and shows that
conditioned on  $Y_1,\dots,Y_m$ and $N_1,\dots,N_m$,
the increments $Y_{m+1}-Y_m$ and $N_{m+1}-N_m$ are independent, with the
(conditional) distributions given by \eqref{l2} and \eqref{nge}.

This implies that conditioned on $(Y_m)\xoo$, the increments
$V_{k}:=N_{k+1}-N_{k}$ are independent, with (conditionally)
$V_k\xsim\Ge\bigpar{e^{-Y_{k}}}$.
\end{proof}

\begin{remark}
  As said above, our choice $X_n\xsim\Exp(1)$ simplifies the argument, but
  it is not really essential. An equivalent argument has been used by \eg{}
\cite{BroderEtAl}
with values $X_n\xsim U(0,1)$; then one considers the gap $1-Y_m$ and shows
that these gaps can be written as products of independent random variables. 
Taking (minus) the logarithm of the gap yields a sum of independent random
variables (which is more convenient than a product for limit theorems), and
that is equivalent to our version with exponentially distributed values $X_n$.
\end{remark}

So far we have given exact formulas, but now we start to approximate in
order to obtain simpler formulas.
First we approximate the geometric distributions in \eqref{tn1} by
exponential distributions.

\begin{theorem}
  \label{TN2}
As \mtoo, \as{}
\begin{equation}\label{tn2}
  N_m \sim \sum_{k=0}^{m-1} e^{Y_k} E'_k,
\end{equation}
where $Y_k$ are given by \eqref{eq:ly} and $E'_k\xsim\Exp(1)$ are independent
of each other and of $(Y_k)\xoo$.
\end{theorem}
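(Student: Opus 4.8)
The plan is to couple the geometric and exponential representations term by term, conditionally on the sequence $(Y_k)\xoo$. By \refT{TN1}, conditioned on $(Y_k)\xoo$ we have $N_m=\sum_{k=0}^{m-1}V_k$ with $V_k\xsim\Ge(e^{-Y_k})$ independent; I want to replace each $V_k$ by $e^{Y_k}E'_k$. The standard coupling is to take $E'_k\xsim\Exp(1)$ and set $V_k:=\ceil{e^{Y_k}E'_k}$, which has exactly the distribution $\Ge(e^{-Y_k})$ (since $\P(\ceil{e^{Y_k}E'_k}\ge n)=\P(E'_k> (n-1)e^{-Y_k})=e^{-(n-1)e^{-Y_k}}=(1-p)^{n-1}$ with $p=1-e^{-e^{-Y_k}}$ — wait, that is not quite $e^{-Y_k}$). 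Rather, the clean statement is that if $G\xsim\Ge(p)$ then $G$ is stochastically close to $\Exp(-\log(1-p))$, and with $p=e^{-Y_k}$ and $Y_k\to\infty$ we have $-\log(1-e^{-Y_k})=e^{-Y_k}(1+o(1))$, so $e^{Y_k}E'_k$ with $E'_k\xsim\Exp(1)$ is the right scale. Concretely I would use the coupling $V_k=\ceil{E''_k/(-\log(1-e^{-Y_k}))}$ for $E''_k\xsim\Exp(1)$, which is exactly $\Ge(e^{-Y_k})$, and then compare $\ceil{E''_k/(-\log(1-e^{-Y_k}))}$ with $e^{Y_k}E''_k$, absorbing the difference between $-\log(1-e^{-Y_k})$ and $e^{-Y_k}$ and the ceiling rounding into the error term.

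The key steps, in order: (i) establish $Y_k\to\infty$ \as{} — this follows from \refL{LY}, because $Y_m=\sum_{k=1}^m (\gd_k/r(k))E_k$ is nondecreasing in $m$, and either it diverges \as{} or, being a sum of independent nonnegative terms, it converges \as{}; but $r(k)\le k+1$ so $\gd_k/r(k)\ge \gd_k/(k+1)$, and since $r$ increases by at most $1$ each step, $\sum_k \gd_k/(k+1)\ge \sum_k (1+r(k-1)-r(k))/(k+1)$, and a short computation (Abel summation, using $r(k)\le k+1$) shows this series diverges, so $Y_k\to\infty$ \as{}; (ii) fix such a realisation of $(Y_k)\xoo$ and work conditionally; (iii) with the exact coupling $V_k\xsim\Ge(e^{-Y_k})$ realised via $E'_k\xsim\Exp(1)$ as $V_k=\ceil{E'_k/\lambda_k}$ with $\lambda_k:=-\log(1-e^{-Y_k})$, write
\begin{equation*}
N_m=\sum_{k=0}^{m-1}V_k,\qquad \tN_m:=\sum_{k=0}^{m-1}e^{Y_k}E'_k,
\end{equation*}
and bound $|V_k-e^{Y_k}E'_k|\le |1/\lambda_k-e^{Y_k}|E'_k+1$; (iv) since $\lambda_k=e^{-Y_k}+O(e^{-2Y_k})$, we get $1/\lambda_k-e^{Y_k}=O(1)$ uniformly and in fact $1/\lambda_k-e^{Y_k}=O(1)$ with the implied constant absolute (indeed $1/\lambda_k-e^{Y_k}\to 1/2$ as $Y_k\to\infty$), so $|V_k-e^{Y_k}E'_k|=O(E'_k)+O(1)$; (v) hence $|N_m-\tN_m|\le C\sum_{k=0}^{m-1}(E'_k\wedge 1)\cdot\mathbf{1}+ \text{(lower order)}$ — more carefully $|N_m-\tN_m|=O(\sum_{k<m} E'_k)+O(m)=O(m)$ \as{} by the law of large numbers; (vi) finally show $\tN_m/m\to\infty$ \as{}, which gives $N_m/\tN_m\to 1$, i.e.\ $N_m\sim\tN_m$.

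The main obstacle is step (vi): showing $\tN_m=\sum_{k=0}^{m-1}e^{Y_k}E'_k$ grows faster than $m$, so that the $O(m)$ discrepancy is negligible. Since $e^{Y_k}\ge 1$ and $Y_k$ is nondecreasing with $Y_k\to\infty$, we have $\tN_m\ge \sum_{k=0}^{m-1}E'_k\sim m$ \as{}, which is only borderline; to get strict domination I would argue that for any $A$, eventually $e^{Y_k}\ge A$, so $\tN_m\ge A\sum_{k=k_0}^{m-1}E'_k\sim Am$ eventually, whence $\liminf \tN_m/m\ge A$ for every $A$, i.e.\ $\tN_m/m\to\infty$ \as. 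Then $|N_m-\tN_m|=O(m)=o(\tN_m)$ \as, giving \eqref{tn2}. One should double-check that the exceptional null sets (from the various a.s.\ statements, over a countable family of $A$'s and the LLN) can be combined, which is routine.
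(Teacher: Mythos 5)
Your step (i) is where the argument breaks, and it is not a repairable slip in wording: the claim that $Y_k\to\infty$ \as{} for \emph{every} admissible sequence $r(m)$ is false. By \eqref{eq:ly}, $Y_m=\sum_{k\le m}(\gd_k/r(k))E_k$, and $Y_m\to\infty$ \as{} if and only if $\sum_k\gd_k/r(k)=\infty$ (this is exactly the dichotomy of \refL{LE}). Taking $r(m)=m+1$ (hire everyone) gives $\gd_k\equiv0$ and $Y_k\equiv0$; taking $r(m)=m+1$ except for repeats at $m=2^j$ gives $\sum_k\gd_k/r(k)<\infty$, so $Y_m$ converges \as{} to a finite limit. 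Your Abel-summation computation cannot show divergence of $\sum_k\gd_k/(k+1)=\sum_k\bigpar{1-(r(k)-r(k-1))}/(k+1)$, since the increments $r(k)-r(k-1)$ may equal $1$ for all (or almost all) $k$, in which case the series converges (indeed vanishes). The theorem, however, is stated for arbitrary $r(m)$, so your proof only covers the regime $\sum_k\gd_k/r(k)=\infty$.

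This matters because your error bound and your step (vi) genuinely need $Y_k\to\infty$. Your termwise coupling gives $|V_k-e^{Y_k}E'_k|\le CE'_k+1$, hence $|N_m-\tN_m|=O(m)$ \as; this is only useful if $\tN_m/m\to\infty$, which fails when $Y_k$ stays bounded (there $\tN_m\asymp m$), so in that case your bound does not yield $N_m/\tN_m\to1$. The statement is still true in that regime, but proving it requires exploiting cancellation rather than absolute termwise errors — e.g.\ noting that $\E(V_k\mid Y_k)=\E(e^{Y_k}E'_k\mid Y_k)=e^{Y_k}$ and applying a Kolmogorov-type strong law (conditionally on $(Y_k)\xoo$, the variances are bounded) to show both $N_m$ and $\tN_m$ are \as{} asymptotic to $\sum_{k<m}e^{Y_k}\ge m$ — a piece your proposal does not supply. (The paper sidesteps the whole case distinction by Poissonization: with candidates arriving at Poisson times, $T_m:=\tau_{N_m}$ equals $\sum_{k=0}^{m-1}e^{Y_k}E'_k$ exactly, and $T_m/N_m=\tau_{N_m}/N_m\asto1$ merely because $N_m\to\infty$, with no condition on $Y_k$.) Two minor points in addition: the coupling $V_k=\ceil{E'_k/\lambda_k}$ misbehaves when $Y_k=0$ (then $\lambda_k=\infty$ and the formula gives $0$ instead of $1$), and the limit of $1/\lambda_k-e^{Y_k}$ is $-1/2$, not $1/2$; neither of these is serious.
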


\begin{proof}
  We use continuous time, and assume that candidate $n$ is examined at time
  $\tau_n$, where the waiting times $\tau_n-\tau_{n-1}$ (with $\tau_0=0$)
  are \iid{} $\Exp(1)$. In other words, the candidates arrive according to a
  Poisson process on $\ooo$ with intensity 1.
Note that by the law of large numbers, $\tau_n/n\asto1$ as \ntoo.
We assume that the times $(\tau_n)_n$ are independent of the values $(X_n)_n$.

Let $T_m:=\tau_{N_m}$ be the time the $m$-th candidate is accepted.
Then, as \mtoo{} and thus $N_m\to\infty$,
\begin{equation}
  \label{lil}
  \frac{T_m}{N_m}=\frac{\tau_{N_m}}{N_m}\asto1.
\end{equation}

We argue as in the proof of \refT{TN1}.
Condition on $Y_1,\dots,Y_m$ and $T_1,\dots,T_m$ for some $m$.
Then, after $T_m$, candidates arrive according to a Poisson process with
intensity 1, and thus candidates with a value $>Y_m$ arrive as a Poisson
process with intensity $e^{-Y_m}$. Consequently, conditioned on the past,
the waiting time $T_{m+1}-T_m$ has an exponential waiting time
\begin{equation}\label{tm2}
  T_{m+1}-T_m\xsim\Exp\bigpar{e^{-Y_m}}.
\end{equation}

By the same argument as in the proof of \refT{TN1}, this implies that conditioned on
$(Y_k)\xoo$, the  
increments $\hV_k:=T_{k+1}-T_k$ are independent with
$\hV_k\xsim\Exp\bigpar{e^{-Y_k}}$. Define $E'_k:=e^{-Y_k}\hV_k\xsim\Exp(1)$.
Then 
\begin{equation}\label{tm3}
  T_m=\sum_{k=0}^{m-1}\hV_k
=\sum_{k=0}^{m-1}e^{Y_k}E'_k.
\end{equation}
Furthermore, conditioned on $(Y_k)\xoo$, the variables $E'_k$ are \iid{}
$\Exp(1)$; hence, $(Y_k)\xoo$ and $(E'_k)\oxoo$ are independent.

Finally, the exact continuous-time representation \eqref{tm3} implies the
approximation \eqref{tn2} by \eqref{lil}.
\end{proof}

The results above are valid for any sequence $r(m)$ fulfilling the
conditions \eqref{eq:rr}.
For further approximations, we treat the cases of large and small $r(m)$
separately, in \refSs{Slarge}--\ref{Slarge++}
and \refS{Ssmall}, respectively.

We define, recalling \eqref{eq:ly}, 
\begin{equation}
  \label{eq:yk}
  y_m:=\E Y_m = \sum_{k=1}^m \frac{\gd_k}{r(k)}
= \sum_{\substack{1\le k\le m:\\ r(k)=r(k-1)}} \frac{\gd_k}{r(k)}
= \sum_{k=1}^m \frac{1}{r(k)} -
 \sum_{\ell=2}^{r(m)} \frac{1}{\ell},
\end{equation}
where the final equality follows because each $\ell\in\set{2,\dots,r(m)}$
equals $r(k)$ for exactly one $k\in\set{1,\dots,m}$ with $r(k)>r(k-1)$.

\section{Large $r(m)$}\label{Slarge}

In this section we assume that $r(m)$ is large, \ie,
\begin{equation}
  \label{eq:r2}
  \summ \frac{1}{r(m)^2}<\infty.
\end{equation}

\begin{lemma}
  \label{LZ}
Suppose that $\summ r(m)\qww<\infty$.
Then $Y_m-y_m\asto Z$ as \mtoo, where
\begin{equation}\label{Z}
Z:=\sum_{k=1}^\infty \frac{\gd_k}{r(k)} (E_k-1),
\end{equation}
with, as always, $(E_k)\xoo$ are \iid{} $\Exp(1)$.
\end{lemma}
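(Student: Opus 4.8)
The plan is to apply Kolmogorov's theorem for a.s.\ convergence of a series of independent random variables. By \refL{LY}, we may write $Y_m - y_m = \sum_{k=1}^m \frac{\gd_k}{r(k)}(E_k-1)$, where the summands are independent and centered, since $y_m = \E Y_m$ by \eqref{eq:yk}. First I would verify the variance sum converges: each term has variance $\frac{\gd_k^2}{r(k)^2}\Var(E_k) = \frac{\gd_k}{r(k)^2}\le \frac{1}{r(k)^2}$ (using $\gd_k\in\set{0,1}$), so $\sum_k \Var\bigpar{\frac{\gd_k}{r(k)}(E_k-1)} \le \sum_k r(k)\qww < \infty$ by the hypothesis \eqref{eq:r2}. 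Hence the series $Z:=\sum_k \frac{\gd_k}{r(k)}(E_k-1)$ converges a.s.\ (and in $L^2$) by Kolmogorov's one-series theorem, and $Y_m - y_m \to Z$ a.s.\ as \mtoo{} since $Y_m - y_m$ is exactly the $m$-th partial sum.

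There is one subtlety to address: \refL{LY} only asserts the existence of \emph{some} \iid{} $\Exp(1)$ sequence $(E_k)$ for which \eqref{eq:ly} holds, and when $\gd_k=0$ the corresponding $E_k$ is an arbitrary independent $\Exp(1)$ variable. But this causes no problem, because when $\gd_k=0$ the coefficient $\gd_k/r(k)$ vanishes, so the value of $E_k$ is irrelevant to both $Y_m$ and $Z$; I would simply fix the same sequence $(E_k)$ throughout and note that \eqref{Z} depends only on the terms with $\gd_k=1$. The statement about $(E_k)$ being \iid{} $\Exp(1)$ is then inherited directly from \refL{LY}.

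I do not anticipate a genuine obstacle here — the lemma is essentially a direct application of the two/three-series theorem once the representation from \refL{LY} is in hand. The only thing requiring a word of care is making sure the centering is correct, i.e.\ that $y_m = \E Y_m$, which is immediate from \eqref{eq:yk} together with $\E E_k = 1$; and that the series defining $Z$ is indexed consistently with the partial sums $Y_m - y_m$, which it is. If one wanted to be thorough one could also remark that the a.s.\ limit $Z$ has mean $0$ and variance $\sum_{k:\gd_k=1} r(k)\qww$, though this is not needed for the statement.
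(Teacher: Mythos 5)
Your proof is correct and follows essentially the same route as the paper: write $Y_m-y_m$ as the partial sums of the series \eqref{Z} via \refL{LY} and \eqref{eq:yk}, then invoke a.s.\ convergence of a series of independent centered summands whose variances sum to at most $\sum_k r(k)\qww<\infty$. Your extra remark about the irrelevance of the $E_k$ with $\gd_k=0$ is a harmless clarification the paper leaves implicit.
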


\begin{proof}
  By \eqref{eq:ly} and \eqref{eq:yk},
  \begin{equation}\label{Yy}
Y_m-y_m=\sum_{k=1}^m \frac{\gd_k}{r(k)} (E_k-1),
  \end{equation}
so $Y_m-y_m$ converges to $Z$ given by \eqref{Z} whenever the latter sum
converges.
Furthermore, this occurs a.s., since the summands in \eqref{Z} are
independent random variables with mean 0 and sum of variances
\begin{equation}
  \sum_{k=1}^\infty\E\Bigpar{ \frac{\gd_k}{r(k)} (E_k-1)}^2
\le \sum_{k=1}^\infty \frac{1}{r(k)^2}<\infty.
\end{equation}
\end{proof}

We let $Z$ denote the sum in \eqref{Z} whenever \eqref{eq:r2} holds.

\begin{remark}
  Since the $\Exp(1)$ distribution is infinitely divisible with \Levy{}
  measure $x\qw e^{-x}\dd x$, it follows from \eqref{Z}
that $Z$ is infinitely divisible with \Levy{} measure,
arguing as in \eqref{eq:yk},
\begin{align}
  \sumk \gd_k  e^{-r(k) x} \frac{\dd x}x
=
\biggpar{ \sumk   e^{-r(k) x}-\frac{1}{e^{2x}-e^x}} \frac{\dd x}x,
\qquad x>0.
\end{align}
\end{remark}

\begin{theorem}
  \label{TN3}
Suppose that $\summ r(m)\qww<\infty$.
Then, 
as \mtoo, a.s.
\begin{equation}\label{tn3}
  N_m \sim e^Z\sum_{k=0}^{m-1} e^{y_k} E'_k,
\end{equation}
where $Z$ and $y_k$ are given by \eqref{Z} and \eqref{eq:yk}, 
and $E'_k\xsim\Exp(1)$ are independent of each other and of $Z$.
\end{theorem}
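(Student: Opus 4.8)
I would start from the exact continuous-time representation \eqref{tn2} of \refT{TN2}, namely $N_m\sim\sum_{k=0}^{m-1}e^{Y_k}E'_k$ a.s., and substitute the decomposition $Y_k=y_k+(Y_k-y_k)$, where $y_k=\E Y_k$ is deterministic and, by \refL{LZ}, $Y_k-y_k\asto Z$ as \ktoo{} under the hypothesis $\summ r(m)\qww<\infty$. This gives
\begin{equation*}
  N_m\sim\sum_{k=0}^{m-1}e^{y_k}e^{Y_k-y_k}E'_k.
\end{equation*}
The heuristic is clear: since $Y_k-y_k\to Z$, the factor $e^{Y_k-y_k}$ is eventually close to $e^Z$, so the sum should be asymptotic to $e^Z\sum_{k=0}^{m-1}e^{y_k}E'_k$. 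The whole content of the proof is making this "eventually" argument rigorous, i.e.\ showing the contribution of the early terms (where $e^{Y_k-y_k}$ differs from $e^Z$) is negligible compared to the growing sum.

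**Key steps.** First I would record that $e^{y_k}\to\infty$; this holds because $\sum_{k=1}^m 1/r(k)=\infty$ (as $r(k)\le k+1$, so $\sum 1/r(k)$ diverges) while $\sum_{\ell=2}^{r(m)}1/\ell=O(\log r(m))=O(\log m)$, so by \eqref{eq:yk} $y_m\to\infty$; in fact one should check $\sum_k e^{y_k}=\infty$ as well, which follows since $e^{y_k}\ge c$ and the sum has infinitely many terms (indeed $e^{y_k}$ is nondecreasing up to bounded factors, and grows at least like a positive power or $\log$-type rate — in any case $\sum e^{y_k}=\infty$ and the partial sums $S_m:=\sum_{k=0}^{m-1}e^{y_k}E'_k\asto\infty$ since the $E'_k$ are i.i.d.\ $\Exp(1)$ and independent of $Z$). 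Second, fix $\eps>0$ and (using $Y_k-y_k\asto Z$) choose a random $K$ such that $|e^{Y_k-y_k}-e^Z|\le\eps e^Z$ for all $k\ge K$. Split
\begin{equation*}
  \sum_{k=0}^{m-1}e^{y_k}e^{Y_k-y_k}E'_k
  =\sum_{k=0}^{K-1}e^{y_k}e^{Y_k-y_k}E'_k
  +\sum_{k=K}^{m-1}e^{y_k}e^{Y_k-y_k}E'_k.
\end{equation*}
The first sum is an a.s.\ finite random variable (not depending on $m$), hence $o(1)\cdot\sum_{k=0}^{m-1}e^{y_k}E'_k$ as \mtoo{} because $S_m\to\infty$. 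For the second sum, $e^Z(1-\eps)\sum_{k=K}^{m-1}e^{y_k}E'_k\le\sum_{k=K}^{m-1}e^{y_k}e^{Y_k-y_k}E'_k\le e^Z(1+\eps)\sum_{k=K}^{m-1}e^{y_k}E'_k$, and again $\sum_{k=K}^{m-1}e^{y_k}E'_k\sim S_m$. Combining, $\limsup_m N_m/(e^Z S_m)\le 1+\eps$ and $\liminf_m N_m/(e^Z S_m)\ge 1-\eps$ a.s.; let $\eps\downarrow 0$ along a sequence. Finally, invoke the a.s.\ asymptotic equivalence $N_m\sim\sum_{k=0}^{m-1}e^{Y_k}E'_k$ from \refT{TN2} to conclude $N_m\sim e^Z S_m=e^Z\sum_{k=0}^{m-1}e^{y_k}E'_k$ a.s.

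**Main obstacle.** The only real point requiring care is the divergence $S_m=\sum_{k=0}^{m-1}e^{y_k}E'_k\asto\infty$, which is what makes the bounded "early" sum asymptotically negligible; this needs $\sum_k e^{y_k}=\infty$, and then it follows from a Borel--Cantelli / three-series-type argument since the $E'_k$ are i.i.d.\ positive (for instance, $\P(E'_k\ge 1)=e^{-1}$ infinitely often, so $S_m\ge e^{-1}\#\{k<m:E'_k\ge1\}\to\infty$ a.s.). One must also make sure the swapping of "a.s.\ for each $\eps$" into "a.s.\ simultaneously" is handled by taking $\eps=1/j$, $j\in\bbN$; and that $Z$, being built from $(E_k)$, is independent of $(E'_k)$ as guaranteed in \refT{TN2}, so conditioning on $Z$ (treating $e^Z$ as a fixed finite positive constant) is legitimate throughout. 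None of this is deep; the argument is a routine "negligible initial segment" estimate once the growth of the partial sums $S_m$ is established.
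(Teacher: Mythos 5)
Your proof is correct and takes essentially the same route as the paper: you write $Y_k=y_k+Z+\eps_k$ with $\eps_k=Y_k-y_k-Z\to 0$ a.s.\ (Lemma~\ref{LZ}), factor out $e^Z$, and show the multiplicative errors $e^{\eps_k}$ are asymptotically negligible because the partial sums $\sum_{k<m}e^{y_k}E'_k$ diverge a.s.; the paper packages your ``split at a random $K$'' step into the deterministic Lemma~\ref{LS}, applied pointwise. One small slip: your parenthetical claim that $e^{y_k}\to\infty$ is false in general (e.g.\ $r(m)=m+1$ gives $y_m\equiv 0$), but you never actually use it --- what your argument needs, and what you do establish, is only $\sum_k e^{y_k}=\infty$, which holds trivially since $y_k\ge 0$.
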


\begin{proof}
  Let $\eps_k:=Y_k-y_k-Z$; then a.s.\ $\eps_k\to0$ as \ktoo{} by \refL{LZ}.
Furthermore, $Y_k=y_k+Z+\eps_k$, and thus
\begin{equation}
  \sum_{k=0}^{m-1}e^{Y_k} E_k' =  e^Z \sum_{k=0}^{m-1}e^{\eps_k}e^{y_k} E_k'.
\end{equation}
Hence, the result \eqref{tn3} follows from \eqref{tn2} and the simple
deterministic Lemma \ref{LS} below, noting that $\sumko e^{y_k}E'_k\ge\sumko
E'_k=\infty$ a.s.
\end{proof}

\begin{lemma}
  \label{LS}
Suppose that $a_k>0$, $\sumko a_k=\infty$ and $\eps_k\to0$ as \ktoo.
Then, as \mtoo,
\begin{equation}\label{ls}
\frac{\sum_{k=0}^m (1+\eps_k)a_k}{ \sum_{k=0}^m a_k} \to1.
%\sum_{k=0}^m (1+\eps_k)a_k \bigm/ \sum_{k=0}^m a_k \to1.
\end{equation}
\end{lemma}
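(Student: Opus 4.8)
The plan is to reduce the weighted ratio to a genuine Cesàro–type average and then invoke the elementary fact that Cesàro means of a null sequence vanish. Concretely, write
\begin{equation}\label{LS-plan-split}
\frac{\sum_{k=0}^m (1+\eps_k)a_k}{\sum_{k=0}^m a_k}
= 1 + \frac{\sum_{k=0}^m \eps_k a_k}{\sum_{k=0}^m a_k},
\end{equation}
so it suffices to show that the second term tends to $0$ as \mtoo. Set $A_m:=\sum_{k=0}^m a_k$, which by hypothesis increases to $\infty$.

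First I would fix $\eta>0$ and choose $K$ so that $|\eps_k|\le\eta$ for all $k\ge K$. Split the numerator of the error term as $\sum_{k=0}^{K-1}\eps_k a_k + \sum_{k=K}^{m}\eps_k a_k$. The first (finite) sum is a constant $B_K$ independent of $m$, so $|B_K|/A_m\to0$ since $A_m\to\infty$; hence for $m$ large this contribution is at most $\eta$. For the tail, since all $a_k>0$ we have
\begin{equation}\label{LS-plan-tail}
\Bigl|\frac{\sum_{k=K}^{m}\eps_k a_k}{A_m}\Bigr|
\le \frac{\sum_{k=K}^{m}|\eps_k| a_k}{A_m}
\le \eta\,\frac{\sum_{k=K}^{m} a_k}{A_m}
\le \eta,
\end{equation}
using $\sum_{k=K}^m a_k\le A_m$. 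Combining, $\limsup_{m\to\infty}\bigl|\sum_{k=0}^m\eps_k a_k\bigr|/A_m\le 2\eta$, and letting $\eta\downto0$ gives that the error term in \eqref{LS-plan-split} vanishes, which is \eqref{ls}.

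There is really no hard part here — the only thing to be slightly careful about is that positivity of the $a_k$ is what makes the triangle-inequality bound in \eqref{LS-plan-tail} clean (it lets $\sum_{k=K}^m a_k\le A_m$ without sign cancellations), and that $A_m\to\infty$ is exactly what kills the fixed head $B_K$; both are given by hypothesis. An alternative, essentially equivalent, route would be to write the ratio as $\sum_{k=0}^m (1+\eps_k)\,w_{k,m}$ with weights $w_{k,m}:=a_k/A_m\in(0,1]$ summing to $1$ and $\max_k w_{k,m}\to$ small in the relevant sense, and quote the Toeplitz/Silverman summability criterion directly; I would present the hands-on splitting argument above since it is self-contained and matches the level of the surrounding lemmas.
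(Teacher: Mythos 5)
Your argument is correct and follows essentially the same route as the paper: subtract off the main term, split the error sum at a cutoff $K$ past which $|\eps_k|<\eta$, bound the fixed head by $o(A_m)$ and the tail by $\eta A_m$, and let $\eta\downarrow 0$. The remark about a Toeplitz/Silverman shortcut is a nice observation but not needed.
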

\begin{proof}
  Let $\eta>0$. Then there exists $K$ such that if $k>K$, then
  $|\eps_k|<\eta$.
Consequently, for $m>K$,
\begin{equation}%\label{ls}
  \lrabs{\sum_{k=0}^m \eps_ka_k}
\le
\sum_{k=0}^K|\eps_k|a_k
+
\eta \sum_{k=K+1}^m a_k,
\end{equation}
which is less than $2\eta \sum_{k=0}^m a_k$ if $m$ is large enough.
This implies \eqref{ls}.
\end{proof}

\begin{lemma}\label{L4}
Suppose that
\begin{equation}
  \label{eq:r2b}
  \summ \frac{1}{r(m)^2}<\infty.
\end{equation}
Then 
\begin{equation}
\summo  \frac{e^{2y_m}}{\Bigpar{\sum_{k=0}^m e^{y_k}}^2}
<\infty.
\end{equation}
\end{lemma}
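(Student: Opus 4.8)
The plan is to set $S_m:=\sum_{k=0}^m e^{y_k}$ and, recalling \eqref{eq:yk}, $c_m:=y_m-y_{m-1}=\gd_m/r(m)\ge0$, so that $(y_m)_{m\ge0}$ is nondecreasing, $y_0=0$, $S_0=e^{y_0}=1$, and $c_m\le 1/r(m)$; hence $\summ c_m^2\le\summ r(m)^{-2}<\infty$ by \eqref{eq:r2b}, and this is the only way the hypothesis will be used. Since the $m=0$ term of the sum in the lemma equals $e^{2y_0}/S_0^2=1$, it suffices to bound the truncated sums $B_M:=\sum_{m=1}^M e^{2y_m}/S_m^2$ uniformly in $M$ and then let $M\to\infty$.

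First I would use $e^{y_m}=S_m-S_{m-1}$ together with $0<S_{m-1}\le S_m$ to get the pointwise estimate
\[
\frac{e^{2y_m}}{S_m^2}=\frac{(S_m-S_{m-1})e^{y_m}}{S_m^2}\le\frac{(S_m-S_{m-1})e^{y_m}}{S_mS_{m-1}}=e^{y_m}\bigl(S_{m-1}^{-1}-S_m^{-1}\bigr).
\]
Summing over $1\le m\le M$ and rearranging by Abel summation,
\[
B_M\le\frac{e^{y_1}}{S_0}-\frac{e^{y_M}}{S_M}+\sum_{m=1}^{M-1}\frac{e^{y_{m+1}}-e^{y_m}}{S_m}\le e+\sum_{m=1}^{M-1}\frac{e^{y_{m+1}}-e^{y_m}}{S_m},
\]
using $S_0=1$, $e^{y_1}=e^{c_1}\le e$ (as $c_1\le1$), and dropping the nonpositive term $-e^{y_M}/S_M$. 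Since $0\le c_{m+1}=y_{m+1}-y_m\le1$, the convexity bound $e^x-1\le(e-1)x$ on $[0,1]$ gives $e^{y_{m+1}}-e^{y_m}=e^{y_m}(e^{c_{m+1}}-1)\le(e-1)c_{m+1}e^{y_m}$, so, as $e^{y_m}\le S_m$,
\[
\frac{e^{y_{m+1}}-e^{y_m}}{S_m}\le(e-1)\,c_{m+1}\,\frac{e^{y_m}}{S_m}.
\]

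The key step is then to avoid the wasteful bound $e^{y_m}/S_m\le1$ (which would only leave $\sum c_{m+1}$, and that can diverge, \eg{} when $r(m)=\ga m+O(1)$ as for \hmedian): instead I keep one factor $e^{y_m}/S_m$ and apply the \CSineq{},
\[
\sum_{m=1}^{M-1}c_{m+1}\frac{e^{y_m}}{S_m}\le\Bigl(\sum_{m=1}^{M-1}c_{m+1}^2\Bigr)^{1/2}\Bigl(\sum_{m=1}^{M-1}\frac{e^{2y_m}}{S_m^2}\Bigr)^{1/2}\le\kappa\,B_M^{1/2},
\]
where $\kappa:=\bigl(\summ c_m^2\bigr)^{1/2}<\infty$. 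Combining the displays gives the self-improving inequality $B_M\le e+(e-1)\kappa\,B_M^{1/2}$, i.e.\ a quadratic inequality in $B_M^{1/2}$ that forces $B_M^{1/2}\le\tfrac12\bigl((e-1)\kappa+\sqrt{(e-1)^2\kappa^2+4e}\bigr)$, a bound independent of $M$. Letting $M\to\infty$ yields $\sum_{m\ge1}e^{2y_m}/S_m^2<\infty$, and adding back the $m=0$ term proves the lemma.

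The main (and essentially only) obstacle is this last manoeuvre: the hypothesis $\summ r(m)^{-2}<\infty$ is too weak to control $\sum_m c_{m+1}e^{y_m}/S_m$ term by term, and absorbing that sum back into $B_M$ through Cauchy--Schwarz is what makes the estimate close. The rest is routine; the only care needed is to carry the argument through the finite truncations $B_M$, since the Cauchy--Schwarz step presupposes that the quantity it bounds is finite.
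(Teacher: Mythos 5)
Your proof is correct, and it takes a genuinely different route from the paper's. The paper proves the stronger pointwise estimate \eqref{eq:b}: using the monotonicity of $r$ from \eqref{eq:rr}, the last $a(m):=r(\floor{m/2})$ increments of $y$ total at most $1$, whence $\sum_{k=0}^m e^{y_k}\ge a(m)e^{y_m-1}$; this dominates each term of the series by $e^2/a(m)^2$ and summing gives the explicit bound $2e^2\summ r(m)\qww$. You instead prove only the summability, via the telescoping bound $e^{2y_m}/S_m^2\le e^{y_m}(S_{m-1}\qw-S_m\qw)$, summation by parts, and then a self-improving \CSineq{} step yielding $B_M\le e+(e-1)\kappa B_M^{1/2}$ uniformly in $M$ — all the steps check out, including the use of $y_1\le1$, the convexity bound $e^x-1\le(e-1)x$ on $[0,1]$, and your (correct) caution to work with finite truncations before bootstrapping. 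What each approach buys: yours uses only $\gd_m/r(m)\le1$ and $\summ \gd_m/r(m)^2<\infty$, so it does not need the monotonicity of $r$ or any comparison of $r$ at different arguments, making it slightly more robust; the paper's argument is shorter once one has \eqref{eq:b}, gives a termwise bound with an explicit constant, and — more importantly for the paper — the lower bound \eqref{eq:b} on $\sum_{k\le m}e^{y_k}$ is reused later (in the proof of \refL{Lgl}, and an analogous estimate appears in \refL{LC1}), which your argument does not provide.
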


\begin{proof}
Let $m\ge0$ and let $a(m):=r(\floor{m/2})\le \floor{m/2}+1$.
If $0\le i < a(m)$, then $i\le a(m)-1\le m/2$, and thus $m-i\ge m/2$.
Hence, 
\begin{equation}
  y_{m}-y_{m-i}=\sum_{j=m-i+1}^m\frac{\gd_j}{r(j)} 
\le  \frac{i}{r(\ceil{m/2})}
\le \frac{i}{a(m)}
\le 1.
\end{equation}
Consequently, 
\begin{equation}
  \label{eq:b}
  \sum_{k=0}^m e^{y_k}
\ge
  \sum_{i=0}^{a(m)-1} e^{y_{m-i}}
\ge
  \sum_{i=0}^{a(m)-1} e^{y_{m}-1}
=a(m)e^{y_m-1}
\end{equation}
and thus
\begin{equation}
\summo  \frac{e^{2y_m}}{\Bigpar{\sum_{k=0}^m e^{y_k}}^2}
\le
%\summo  \frac{e^{2y_m}}{a(m)^2e^{2(y_m-1)}}= 
\summo  \frac{e^{2}}{a(m)^2}
=2\sum_{\ell=0}^\infty\frac{e^2}{r(\ell)^2}
<\infty.
\end{equation}
\end{proof}

Define, with $y_k$  given by \eqref{eq:yk}, 
\begin{equation}\label{gl}
  \gl_m:=\sumkomi e^{y_k}, \qquad m\ge0.
\end{equation}

\begin{theorem}
  \label{TN4}
Suppose that $\summ r(m)\qww<\infty$.
Then, 
as \mtoo, 
\begin{equation}\label{tn4}
%  N_m  \sim \gl_m e^Z,
  N_m / \gl_m \asto e^Z,
\end{equation}
where $Z$ and $\gl_m$ are given by \eqref{Z} and \eqref{gl}.
\end{theorem}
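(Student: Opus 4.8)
The plan is to combine \refT{TN3} with \refL{L4}. By \refT{TN3} we have, a.s.\ as \mtoo,
\begin{equation*}
  N_m \sim e^Z \sum_{k=0}^{m-1} e^{y_k} E'_k = e^Z \sum_{k=0}^{m-1} e^{y_k} + e^Z \sum_{k=0}^{m-1} e^{y_k}(E'_k-1),
\end{equation*}
so, recalling $\gl_m=\sum_{k=0}^{m-1}e^{y_k}$, it suffices to show that
\begin{equation*}
  R_m := \frac{1}{\gl_m}\sum_{k=0}^{m-1} e^{y_k}(E'_k-1) \asto 0 \qquad\text{as \mtoo.}
\end{equation*}
Once this is established, $N_m/\gl_m \sim e^Z(1+R_m) \asto e^Z$, which is \eqref{tn4}.

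The key step is thus the a.s.\ convergence $R_m\to 0$, and the tool is a standard martingale/Kronecker-type argument. Write $S_m := \sum_{k=0}^{m-1} e^{y_k}(E'_k-1)$; this is a sum of independent mean-zero random variables (conditionally on, hence jointly with, $Z$ — recall from \refT{TN3} that $(E'_k)$ is independent of $Z$), with $\Var\bigpar{e^{y_k}(E'_k-1)} = e^{2y_k}$. Since $\gl_m\upto\infty$ (as $\gl_m\ge m$), Kronecker's lemma reduces $S_m/\gl_m\to0$ a.s.\ to the a.s.\ convergence of $\sum_{k} e^{y_k}(E'_k-1)/\gl_{k+1}$, which by the Khinchin--Kolmogorov three-series theorem (or the $L^2$ martingale convergence theorem) follows from
\begin{equation*}
  \sum_{k=0}^{\infty} \frac{e^{2y_k}}{\gl_{k+1}^2} < \infty.
\end{equation*}
But $\gl_{k+1}=\sum_{j=0}^{k}e^{y_j}$, so this is exactly the series shown to be finite in \refL{L4} (with the index shift $m\mapsto k$). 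Hence $S_m/\gl_m\asto0$, i.e.\ $R_m\asto0$.

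I expect the main obstacle to be purely bookkeeping rather than conceptual: one must be a little careful that \refT{TN3} gives an \emph{asymptotic} equivalence $N_m\sim e^Z\sum_{k=0}^{m-1}e^{y_k}E'_k$ (not an exact identity), so the argument should be phrased as: the right-hand side divided by $\gl_m$ tends to $e^Z$ a.s., and then $N_m/\gl_m$ has the same a.s.\ limit because the ratio of $N_m$ to $e^Z\sum e^{y_k}E'_k$ tends to $1$. One should also note $\gl_m\to\infty$ and $e^Z>0$ a.s.\ so that dividing is legitimate and no $0/0$ issue arises. Apart from invoking Kronecker's lemma and the $L^2$ convergence criterion, everything is immediate from the results already in the excerpt.
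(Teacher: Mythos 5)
Your proposal is correct and is essentially the paper's own argument: the paper also reduces \eqref{tn4} to showing $\gl_m\qw\sum_{k=0}^{m-1}e^{y_k}(E'_k-1)\asto0$ via \refL{L4}, merely citing Feller's strong-law criterion (Theorem VII.8.3) in place of your explicit Kronecker-plus-Kolmogorov unpacking of the same fact, and then combines with \refT{TN3} exactly as you do.
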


\begin{proof}
  Let $W_m:=e^{y_m}(E'_m-1)$ and $b_m:=\gl_{m+1}$.
Then $\E W_m=0$ and \refL{L4} shows that $\summo \Var(W_m)/b_m^2<\infty$.
Consequently,
see \cite[Theorem VII.8.3]{FellerII},
$b_m\qw\sum_{k=0}^m W_k \asto0$.
Hence,
\begin{equation}
\frac{\sum_{k=0}^{m-1} e^{y_k} E'_k} {\gl_m}
=1+ \frac{\sum_{k=0}^{m-1} e^{y_k} (E'_k-1)} {\gl_m}
=1+ \frac{\sum_{k=0}^{m-1} W_k} {b_{m-1}}\asto1,
\end{equation}
and the result follows from \eqref{tn3}.
\end{proof}

Equivalently, \as{}
$N_m  \sim \gl_m e^Z$  as \mtoo. 
Hence, $N_m$ grows as the deterministic sequence $\gl_m$, with
a random factor (asymptotically independent of $m$) given by $e^Z$.

\refT{TN4} gives the asymptotics (and in particular the asymptotic
distribution) of $N_m$, the number of candidates examined until $m$ have
been accepted. By inversion, we obtain corresponding asymptotic results for
$M_n$, the number of accepted candidates when $n$ have been examined.
We state one general result as the next theorem. 
More explicit results require inversion of
the function $m\mapsto\gl_m$, which easily is done under further
assumptions; we study an important case in \refS{Slinear} below.

Note first
that $M_n\asto\infty$ as \ntoo, since for every $m$, \as{} some future
candidate $n>N_m$ will satisfy $X_n>Y_m$ and thus be accepted.

\begin{theorem}
  \label{TM1}
Suppose that $\summ r(m)\qww<\infty$.
Then, 
as \ntoo,
\begin{equation}
  \gl_{M_n}/n \asto e^{-Z},
\end{equation}
where $Z$ and $\gl_m$ are given by \eqref{Z} and \eqref{gl}.
\end{theorem}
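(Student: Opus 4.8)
The plan is to deduce \refT{TM1} directly from \refT{TN4} by the standard inversion trick, exploiting the equivalence \eqref{eq:NM} and the monotonicity of both $m\mapsto N_m$ and $m\mapsto\gl_m$. First I would fix a realization in the almost sure event on which simultaneously $N_m/\gl_m\to e^Z$ (by \refT{TN4}), $M_n\to\infty$ as \ntoo{} (noted just before the statement), and $e^Z\in(0,\infty)$ (immediate since $Z$ is a finite \rv). On this event, write $m=M_n$; by \eqref{eq:NM} we have $N_{M_n}\le n< N_{M_n+1}$, so
\begin{equation*}
  \frac{\gl_{M_n}}{N_{M_n}}\cdot\frac{N_{M_n}}{n}\le\frac{\gl_{M_n}}{n}<\frac{\gl_{M_n}}{N_{M_n+1}}
 =\frac{\gl_{M_n}}{\gl_{M_n+1}}\cdot\frac{\gl_{M_n+1}}{N_{M_n+1}}.
\end{equation*}
Since $M_n\to\infty$, both $\gl_{M_n}/N_{M_n}$ and $\gl_{M_n+1}/N_{M_n+1}$ converge along this subsequence to $e^{-Z}$ by \refT{TN4} (a convergent sequence composed with an integer sequence tending to infinity still converges). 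So the claim reduces to showing the sandwiching factors $N_{M_n}/n$ and $\gl_{M_n}/\gl_{M_n+1}$ both tend to $1$.

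For $\gl_{M_n}/\gl_{M_n+1}$: by \eqref{gl}, $\gl_{m+1}-\gl_m=e^{y_m}$ and $\gl_{m+1}=\gl_m+e^{y_m}$, so $\gl_m/\gl_{m+1}=1-e^{y_m}/\gl_{m+1}$. By \eqref{eq:b} in the proof of \refL{L4}, $\gl_{m+1}=\sum_{k=0}^m e^{y_k}\ge a(m)e^{y_m-1}$ with $a(m)=r(\floor{m/2})\to\infty$ (since $r(m)\to\infty$ under \eqref{eq:r2}, as $\sum r(m)\qww<\infty$ forces $r(m)\to\infty$). Hence $e^{y_m}/\gl_{m+1}\le e/a(m)\to0$, so $\gl_m/\gl_{m+1}\to1$, and \emph{a fortiori} $\gl_{M_n}/\gl_{M_n+1}\to1$. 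For $N_{M_n}/n$: from $N_{M_n}\le n<N_{M_n+1}$ we get $1\ge N_{M_n}/n> N_{M_n}/N_{M_n+1}=(N_{M_n}/\gl_{M_n})\cdot(\gl_{M_n}/\gl_{M_n+1})\cdot(\gl_{M_n+1}/N_{M_n+1})\to e^{-Z}\cdot1\cdot e^{Z}=1$, so $N_{M_n}/n\to1$ by squeezing. Combining, the outer bounds in the displayed inequality both tend to $e^{-Z}$, whence $\gl_{M_n}/n\asto e^{-Z}$.

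The only genuinely delicate point is the one I would flag as the main obstacle: justifying that $M_n\to\infty$ lets us pass from $N_m/\gl_m\to e^Z$ to $N_{M_n}/\gl_{M_n}\to e^Z$ and $N_{M_n+1}/\gl_{M_n+1}\to e^Z$ along the random index sequence. This is routine — if $u_m\to u$ and $k_n\to\infty$ through $\bbZ_{\ge0}$ then $u_{k_n}\to u$ — but it must be invoked on the single good \as{} event, and one should note that $M_n$ is $\sigma((X_i))$-measurable while the convergence in \refT{TN4} is on that same probability space, so there is no measurability subtlety. I would also remark that $\gl_m$ is strictly increasing (each term $e^{y_k}>0$), so division is legitimate throughout. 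No separate treatment of $r(m)=m+1$ stretches is needed, since $\gl_m$ is defined purely through $y_k$ and \refT{TN4} already incorporates such behaviour.
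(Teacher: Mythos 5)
Your proposal is correct and is essentially the paper's own proof: squeeze $n$ between $N_{M_n}$ and $N_{M_n+1}$, apply \refT{TN4} along the random indices (legitimate since $M_n\to\infty$ a.s.), and use $\gl_{m+1}/\gl_m\to1$, which is exactly \refL{Lgl} — your re-derivation via \eqref{eq:b} and $a(m)=r(\floor{m/2})\to\infty$ is the paper's argument verbatim. The only blemish is that the inequalities in your first display point the wrong way (since $n<N_{M_n+1}$ one has $\gl_{M_n}/n>\gl_{M_n}/N_{M_n+1}$, and the left-hand comparison is a trivial identity), but as both bounding quantities tend to $e^{-Z}$ the squeeze goes through once the directions are corrected.
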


\begin{proof}
Since $M_n\to\infty$ \as{} as \ntoo, 
\eqref{tn4} and \refL{Lgl} below imply that
\as{} $N_{M_n}/\gl_{M_n}\asto e^Z$ and $N_{M_n+1}/\gl_{M_n}\asto e^Z$.
Furthermore, the definitions imply $N_{M_n}\le n< N_{M_n+1}$.
Hence, $n/\gl_{M_n}\asto e^Z$.
\end{proof}

\begin{lemma}\label{Lgl}
If $r(m)\to\infty$ 
  as \mtoo, then $\gl_{m+1}/\gl_m\to1$.
\end{lemma}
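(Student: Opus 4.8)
The plan is to show that the ratio $\gl_{m+1}/\gl_m = 1 + e^{y_m}/\gl_m$ tends to $1$ by proving that $e^{y_m}/\gl_m \to 0$, i.e., that $\gl_m = \sum_{k=0}^{m-1} e^{y_k}$ eventually dwarfs its last term $e^{y_{m-1}}$ (and hence also $e^{y_m}$, since $y_m-y_{m-1}=\gd_m/r(m)\le 1/r(m)\to 0$ so $e^{y_m}/e^{y_{m-1}}\to1$). So it suffices to show $e^{y_m}/\gl_m\to0$, equivalently $\gl_m/e^{y_m}\to\infty$.

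The key computation reuses the idea already present in the proof of \refL{L4}. Fix a slowly growing window: for any integer $j\ge1$, if $m$ is large then for $0\le i\le j$ we have $y_m-y_{m-i}=\sum_{\ell=m-i+1}^{m}\gd_\ell/r(\ell)\le \sum_{\ell=m-j+1}^{m}1/r(\ell)$, and since $r(\ell)\to\infty$ this last sum is $o(1)$ as \mtoo{} for each fixed $j$; in particular it is eventually $<1$. Hence for $m$ large, $\gl_m=\sum_{k=0}^{m-1}e^{y_k}\ge\sum_{i=1}^{j}e^{y_{m-i}}\ge j\,e^{y_m-1}$, so $\gl_m/e^{y_m}\ge j/e$. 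Since $j$ is arbitrary, $\gl_m/e^{y_m}\to\infty$, which gives $e^{y_m}/\gl_m\to0$ and therefore $\gl_{m+1}/\gl_m\to1$.

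I do not anticipate a serious obstacle here; the only mild subtlety is making the two-parameter limit honest. One should first pick $j$, then argue that for all sufficiently large $m$ the bound $y_m-y_{m-i}<1$ holds for every $i\le j$ (which uses only that $r(\ell)\to\infty$, so the tail sum $\sum_{\ell=m-j+1}^{m}1/r(\ell)$ of $j$ terms each $o(1)$ is itself $o(1)$), and only afterwards let $m\to\infty$ to get $\liminf_m \gl_m/e^{y_m}\ge j/e$; finally let $j\to\infty$. An alternative that avoids the windowing entirely: bound $\gl_m/e^{y_m}=\sum_{k=0}^{m-1}e^{y_k-y_m}=\sum_{k=0}^{m-1}\exp\!\bigl(-\sum_{\ell=k+1}^{m}\gd_\ell/r(\ell)\bigr)$ and observe that since the increments $\gd_\ell/r(\ell)\to0$, this is a sum whose terms, read backwards from $k=m-1$, decay no faster than geometrically near the top, forcing the sum to diverge; but the windowing argument above is cleaner and I would present that.

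One remark worth including: the converse implication fails in general, so the hypothesis $r(m)\to\infty$ cannot simply be dropped (e.g.\ for \hbest{r}, $y_m$ grows linearly and $\gl_{m+1}/\gl_m$ tends to a limit $>1$); this is consistent with the dichotomy, since \refL{Lgl} will be invoked only in the large-$r(m)$ regime where in fact $r(m)\to\infty$ follows from $\sum_m r(m)\qww<\infty$ together with monotonicity of $r$ up to unit jumps.
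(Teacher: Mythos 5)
Your proof is correct and follows essentially the same route as the paper: both arguments lower-bound $\gl_m$ by a block of terms $e^{y_{m-i}}$ near the top of the sum, each at least $e^{y_m-1}$ because the increments $\gd_\ell/r(\ell)\le 1/r(\ell)$ over the block sum to less than $1$. The only difference is that you use a fixed window of length $j$ and then let $j\to\infty$, whereas the paper takes the window of length $a(m)=r(\floor{m/2})\to\infty$ directly, reusing the estimate \eqref{eq:b} already established in the proof of \refL{L4} to get the quantitative bound $e^{y_m}\le (e/a(m))\gl_{m+1}$.
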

\begin{proof}
  By \eqref{gl} and \eqref{eq:b},
  \begin{equation}
    \gl_{m+1}-\gl_m = e^{y_m} 
\le \frac{e}{a(m)}\sum_{k=0}^me^{y_k}
=\frac{e}{a(m)}\gl_{m+1}.
  \end{equation}
Since $a(m):=r(\floor{m/2})\to\infty$ as \mtoo{} by assumption, it follows that
$(\gl_{m+1}-\gl_m)/\gl_{m+1}\to0$, and thus $\gl_m/\gl_{m+1}\to1$.
\end{proof}

\section{Roughly linear rank thresholds}\label{Slinear}

As said in the introduction, the strategies  
\hmedian{} and \hpercentile{\ga}
satisfy $r(m)=\ga m+O(1)$ for some constant $\ga>0$, and we stated
\refT{TI1} for this case.
Note that $r(m)=\ga m+O(1)$ 
implies $r(m)\qw = (\ga m)\qw + O(1/m^2)$, and thus
\begin{equation}\label{sofie}
  \summ \bigabs{r(m)\qw - (\ga m)\qw}<\infty.
\end{equation}
In fact, by the proof below, \refT{TI1} holds under the weaker
assumption \eqref{sofie}.
(For example, if $r(m)=\ga m + O(m^{1-\eta})$
for some $\eta>0$.)

 We assume throughout this section that \eqref{sofie} holds, with
$0<\ga\le1$.
We then define
\begin{equation}\label{rho}
\rho:=  \summ \Bigpar{\frac{1}{r(m)} - \frac{1}{\ga m}}\in\bbR.
\end{equation}
 Note that \eqref{sofie} implies
\begin{equation}\label{sofie2}
  r(m)\sim \ga m
\qquad \text{as \mtoo}.
\end{equation}
This is an easy exercise, but for the reader's convenience we give a proof in
 \refApp{A1}.
Note also that \eqref{sofie2} implies $\summ r(m)\qww<\infty$, so $r(m)$ is
large and we can use the results of \refS{Slarge}; our goal in this section
is to use the assumption \eqref{sofie} to make the results more explicit.

\begin{lemma}
  \label{LA}
Suppose that \eqref{sofie} holds for some $\ga\in(0,1]$. Then, 
as \mtoo,
\begin{equation}\label{la}
  y_m = \bigpar{\ga\qw-1}\log m + \bigpar{\ga\qw-1}\gam-\log\ga+\rho+1+o(1)
.
\end{equation}
\end{lemma}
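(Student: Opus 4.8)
The plan is to work directly from the exact expression \eqref{eq:yk},
\[
y_m = \sum_{k=1}^m \frac{1}{r(k)} - \sum_{\ell=2}^{r(m)} \frac{1}{\ell},
\]
and to estimate the two sums separately, using the hypothesis \eqref{sofie} and its consequence \eqref{sofie2} (the latter proved in \refApp{A1}).

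For the first sum, I would split each term as $1/r(k) = 1/(\ga k) + \bigpar{1/r(k) - 1/(\ga k)}$. Summed over $k\le m$, the part $1/(\ga k)$ contributes $\ga\qw\sum_{k=1}^m 1/k = \ga\qw\bigpar{\log m + \gam + o(1)}$ by the standard harmonic-sum asymptotics. The remaining terms $\sum_{k=1}^m \bigpar{1/r(k) - 1/(\ga k)}$ are the partial sums of an absolutely convergent series by \eqref{sofie}, hence converge to $\rho$ as defined in \eqref{rho}. Therefore $\sum_{k=1}^m 1/r(k) = \ga\qw\log m + \ga\qw\gam + \rho + o(1)$.

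For the second sum, \eqref{sofie2} gives $r(m)\sim\ga m\to\infty$, so $\sum_{\ell=2}^{r(m)} 1/\ell = \sum_{\ell=1}^{r(m)} 1/\ell - 1 = \log r(m) + \gam - 1 + o(1)$; and since $r(m)/(\ga m)\to 1$ we have $\log r(m) = \log(\ga m) + o(1) = \log m + \log\ga + o(1)$. Thus the second sum equals $\log m + \log\ga + \gam - 1 + o(1)$. Subtracting the two estimates yields
\[
y_m = \bigpar{\ga\qw-1}\log m + \bigpar{\ga\qw-1}\gam - \log\ga + \rho + 1 + o(1),
\]
which is \eqref{la}.

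The computation is entirely routine; there is no real obstacle. The only points that need a little care are that $r(m)\to\infty$ and the passage $\log r(m) = \log m + \log\ga + o(1)$, which is precisely what \eqref{sofie2} delivers, and that the tail of $\sum_m\bigpar{1/r(m) - 1/(\ga m)}$ is genuinely $o(1)$, guaranteed by the absolute convergence in \eqref{sofie}. As a consistency check one may note that when $\ga=1$ the $\log m$ terms cancel and the formula asserts $y_m\to\rho+1$, in agreement with the fact that $r(m)=m+O(1)$ keeps $y_m$ bounded (indeed, $r(m)=m+1$ gives $\gd_m\equiv0$, so $y_m\equiv0$, while $\rho=\summ\bigpar{(m+1)\qw-m\qw}=-1$).
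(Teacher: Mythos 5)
Your proof is correct and follows essentially the same route as the paper: starting from the exact identity \eqref{eq:yk}, splitting $\sum_{k\le m}1/r(k)$ into the harmonic part $\ga\qw H_m$ plus the convergent correction $\rho$, writing the second sum as $H_{r(m)}-1$, and using \eqref{sofie2} to replace $\log r(m)$ by $\log m+\log\ga+o(1)$. The consistency check at $\ga=1$ is a nice extra but not needed.
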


\begin{proof}
  Let $H_m:=\sum_1^m 1/k$, the $m$-th harmonic number, and recall
that $H_m=\log m + \gam + o(1)$.
Hence, by \eqref{eq:yk} and \eqref{rho},
\begin{align}
y_m
&= 
\sum_{k=1}^m \Bigpar{\frac{1}{r(k)}-\frac{1}{\ga k}} + \frac{1}{\ga}H_m 
 -(H_{r(m)} -1)
\nonumber\\
&=\rho + \ga\qw\bigpar{\log m + \gam} -\log r(m)-\gam +1 + o(1).
\end{align}
The result \eqref{la} follows since $\log(r(m))=\log(\ga m)+o(1)
=\log m + \log\ga+o(1)$ by \eqref{sofie2}.
\end{proof}

\begin{lemma}
  \label{LA2}
Suppose that \eqref{sofie} holds for some $\ga\in(0,1]$. Then, as \mtoo,
\begin{equation}\label{la2}
  \gl_m \sim e^{\xpar{\ga\qw-1}\gam+\rho+1} m^{1/\ga}.
\end{equation}
\end{lemma}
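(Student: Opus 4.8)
The plan is to substitute the asymptotic expansion of $y_m$ from \refL{LA} into the definition \eqref{gl} of $\gl_m$ and then perform the summation. Since $e^{-\log\ga}=\ga\qw$, \refL{LA} can be rewritten, with $C:=e^{\xpar{\ga\qw-1}\gam+\rho+1}$, as
\begin{equation*}
  e^{y_m}=\frac{C}{\ga}\,m^{\ga\qw-1}\bigpar{1+o(1)}\qquad\text{as \mtoo.}
\end{equation*}
Accordingly I would set $b_0:=1$ and $b_k:=\frac{C}{\ga}k^{\ga\qw-1}$ for $k\ge1$, so that $e^{y_k}/b_k\to1$ as \ktoo{} (using also $y_0=0$, hence $e^{y_0}=1=b_0$).

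Next I would invoke the deterministic \refL{LS}. Because $\ga\le1$ we have $\ga\qw-1\ge0$, so $b_k>0$ for all $k$ and $\sum_{k\ge0}b_k=\infty$; applying \refL{LS} with $\eps_k:=e^{y_k}/b_k-1\to0$ (after the trivial shift of the summation index from $m$ to $m-1$) gives
\begin{equation*}
  \gl_m=\sumkomi e^{y_k}=\sumkomi(1+\eps_k)b_k\sim\sumkomi b_k\qquad\text{as \mtoo.}
\end{equation*}

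It then remains only to evaluate $\sumkomi b_k$. The term $b_0$ is bounded and hence negligible, and a standard comparison of $\sum_{k=1}^{m-1}k^{\ga\qw-1}$ with $\int_0^{m}x^{\ga\qw-1}\dd x=\ga\,m^{1/\ga}$ (which is trivial when $\ga=1$, the sum then being simply $m-1$) gives $\sum_{k=1}^{m-1}k^{\ga\qw-1}\sim\ga\,m^{1/\ga}$. Therefore $\sumkomi b_k\sim\frac{C}{\ga}\cdot\ga\,m^{1/\ga}=C\,m^{1/\ga}=e^{\xpar{\ga\qw-1}\gam+\rho+1}m^{1/\ga}$, which together with the previous display is exactly \eqref{la2}. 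I do not expect a genuine obstacle here; the only points requiring a little care are the boundary case $\ga=1$ in the power-sum asymptotics and the verification that the $b_0$ term and the $1+o(1)$ factors do not affect the leading-order behaviour, both of which are immediate.
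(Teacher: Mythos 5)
Your proposal is correct and follows essentially the same route as the paper: substitute the asymptotics of $y_k$ from \refL{LA} into the definition \eqref{gl} and compare the resulting power sum with $\int_0^m x^{\ga\qw-1}\dd x=\ga m^{1/\ga}$. The paper compresses the step you justify via \refL{LS} into a single $\etto$ factor inside the sum, but the underlying argument is identical.
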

\begin{proof}
\refL{LA} implies, where $o(1)\to0$ as \ktoo,
  \begin{equation}\label{la2a}
\gl_m:=
\sum_{k=0}^{m-1}e^{y_k}
= e^{\xpar{\ga\qw-1}\gam+\rho+1}\ga\qw\sum_{k=0}^{m-1} k^{\ga\qw-1}\etto,
\end{equation}
and the result \eqref{la2} follows.
\end{proof}

We are prepared to prove the convergence \eqref{limW} in \refT{TI1}.

\begin{lemma}
  \label{LB}
Suppose that \eqref{sofie} holds for some $\ga\in(0,1]$. Then, as \ntoo,
\begin{equation}\label{W}
    M_n/n^\ga \asto W:=e^{(\ga-1)\gam- \ga\rho-\ga} e^{-\ga Z},
\end{equation}
with $\rho$ and $Z$ given by \eqref{rho} and \eqref{Z}.
\end{lemma}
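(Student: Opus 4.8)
The plan is to combine the asymptotic inversion provided by \refT{TM1} with the explicit evaluation of $\gl_m$ from \refL{LA2}. By \refT{TM1}, since $\summ r(m)\qww<\infty$ under \eqref{sofie2}, we have $\gl_{M_n}/n\asto e^{-Z}$ as \ntoo. On the other hand, $M_n\asto\infty$, so \refL{LA2} applies with $m=M_n$, giving
\begin{equation}
  \gl_{M_n}\sim e^{\xpar{\ga\qw-1}\gam+\rho+1} M_n^{1/\ga}
\qquad\text{a.s.\ as \ntoo}.
\end{equation}
Dividing, $e^{\xpar{\ga\qw-1}\gam+\rho+1} M_n^{1/\ga}/n\asto e^{-Z}$, hence
\begin{equation}
  M_n^{1/\ga}/n \asto e^{-\xpar{\ga\qw-1}\gam-\rho-1}e^{-Z}.
\end{equation}
Raising both sides to the power $\ga$ (a continuous operation, so a.s.\ convergence is preserved) yields
\begin{equation}
  M_n/n^{\ga}\asto e^{-(1-\ga)\gam-\ga\rho-\ga}e^{-\ga Z}
  = e^{(\ga-1)\gam-\ga\rho-\ga}e^{-\ga Z},
\end{equation}
which is exactly \eqref{W}.

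The only point requiring a little care is the legitimacy of substituting the random index $M_n$ into the deterministic asymptotic relation $\gl_m\sim c\,m^{1/\ga}$: this is valid precisely because $M_n\asto\infty$, so for almost every realization the relation $\gl_{M_n(\go)}/(c\,M_n(\go)^{1/\ga})\to1$ follows from the deterministic statement applied along the (realization-dependent) sequence $M_n(\go)\to\infty$. I expect this to be the only conceptual step; everything else is algebraic manipulation of the constants. One should also note that $\gl_{M_n}>0$ and $n>0$ so all quotients are well defined, and that $x\mapsto x^{\ga}$ is continuous and increasing on $\ooo$, so no issue arises in the final exponentiation.
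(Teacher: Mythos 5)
Your proof is correct and follows exactly the same route as the paper: combine Theorem~\ref{TM1} with Lemma~\ref{LA2}, substitute $m=M_n$ (legitimate since $M_n\asto\infty$), and raise to the power $\ga$. The algebra with the constants is also right.
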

\begin{proof}
  \refT{TM1} and \refL{LA2} imply that a.s.
  \begin{equation}
    M_n^{1/\ga}/n \to e^{-(\ga\qw-1)\gam-\rho-1} e^{-Z},
  \end{equation}
and thus \eqref{W} follows.
\end{proof}

We proceed to compute moments of $W$.

\begin{lemma}
  \label{LZ1}
Suppose that \eqref{sofie} holds for some $\ga\in(0,1]$.
If\/ $-\infty < u <\ga$, then
\begin{equation}
  \label{lz1}
\E e^{uZ} = e^{-u\rho-u+u(1-\ga\qw)\gam}
\frac{\gG(1-u/\ga)}{\gG(2-u)}\prodk\frac{1-u/\ga k}{1-u/r(k)}
<\infty.
\end{equation}
Hence, at least for $-1 < s <\infty$,
\begin{align}\label{ti1=}
\E W^s 
&= \frac{\gG(s+1)}{\gG(s\ga+2)} \prodk \frac{1+s/k}{1+s\ga /r(k)}
%\\
%&= \frac{\gG(s+1)}{\gG(s\ga+2)} \prodk \frac{r(k)+sr(k)/k}{r(k)+s\ga}
.
\end{align}
\end{lemma}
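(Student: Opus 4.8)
The plan is to compute the moment generating function $\E e^{uZ}$ directly from the representation \eqref{Z}, using independence of the summands, and then deduce \eqref{ti1=} by substituting into \eqref{W}. First I would write, using the independence of $(E_k)\xoo$ and the fact that $\gd_k\in\set{0,1}$,
\begin{equation*}
  \E e^{uZ} = \prodk \E\exp\Bigpar{\frac{u\gd_k}{r(k)}(E_k-1)}
  = \prodk \exp\Bigpar{-\frac{u\gd_k}{r(k)}}\cdot\E\exp\Bigpar{\frac{u\gd_k}{r(k)}E_k},
\end{equation*}
and then use $\E e^{tE}=(1-t)\qw$ for $t<1$, which here requires $u\gd_k/r(k)<1$; since $r(k)\ge1$ and $\gd_k\le1$ this holds for all $k$ as soon as $u<1$, and in fact the relevant range is $u<\ga$ because we will eventually need $1-u/(\ga k)>0$ for the comparison product to make sense — more on convergence below. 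The factors with $\gd_k=0$ contribute $1$, so writing the product over all $k$ with the convention that the $\gd_k=0$ terms are trivial, I get
\begin{equation*}
  \E e^{uZ} = \prodk \frac{e^{-u\gd_k/r(k)}}{1-u\gd_k/r(k)}.
\end{equation*}

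The next step is to massage this into the stated closed form. The idea is to compare the product over $k$ (with the $\gd_k$-weighting) against the product $\prodk \frac{e^{-u/(\ga k)}}{1-u/(\ga k)}$, which by the Weierstrass product for the Gamma function equals $e^{\gam u/\ga}\gG(1-u/\ga)$ up to the exponential prefactor. Concretely I would write $\gd_k/r(k) = 1/r(k) - (1/r(k))(1-\gd_k)$ and use \eqref{eq:yk} to identify $\sum_k \gd_k/r(k) = y_\infty$ type quantities; more cleanly, recall from \eqref{eq:yk} that for the purposes of the exponential prefactor, $\sum_{k=1}^m \gd_k/r(k) = \sum_{k=1}^m 1/r(k) - \sum_{\ell=2}^{r(m)}1/\ell$, and combine this with \refL{LA}'s computation $y_m = (\ga\qw-1)\log m + (\ga\qw-1)\gam - \log\ga + \rho + 1 + o(1)$. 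The $(1-u\gd_k/r(k))\qw$ factors are handled by splitting: a factor $(1-u/r(k))\qw$ for each $k$ with $\gd_k=1$, i.e.\ each value $r(k)=\ell\in\set{2,\dots}$ attained exactly once, which after reindexing over $\ell$ combines with the $(1-u/(\ga k))\qw$ comparison into the quotient $\prodk \frac{1-u/(\ga k)}{1-u/r(k)}$ appearing in \eqref{lz1}, together with a leftover $\gG(1-u/\ga)/\gG(2-u)$ coming from shifting the product index by one (the $\ell=1$ term is excluded because $r(k)\ge2$ when $\gd_k=1$ for $k\ge1$, while $k$ itself starts at $1$). Convergence of this last quotient-product is the crucial point: since \eqref{sofie} gives $\sum_k |1/r(k)-1/(\ga k)|<\infty$, each factor is $1+O(|1/r(k)-1/(\ga k)|)$ uniformly for $u$ in compact subsets of $(-\infty,\ga)$, so the product converges absolutely and is finite; this is exactly where the hypothesis \eqref{sofie} (rather than just largeness of $r(m)$) is used. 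Assembling the exponential prefactors, one matches them against $e^{-u\rho - u + u(1-\ga\qw)\gam}$ using \eqref{rho} and the harmonic-number expansion, giving \eqref{lz1}.

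Finally, \eqref{ti1=} follows by a direct substitution. From \eqref{W}, $W = e^{(\ga-1)\gam-\ga\rho-\ga}e^{-\ga Z}$, so $\E W^s = e^{s(\ga-1)\gam - s\ga\rho - s\ga}\,\E e^{-s\ga Z}$, and applying \eqref{lz1} with $u = -s\ga$ — which lies in $(-\infty,\ga)$ precisely when $s>-1$ — gives, after the exponential prefactors cancel,
\begin{equation*}
  \E W^s = \frac{\gG(1+s)}{\gG(2+s\ga)}\prodk\frac{1+s/k}{1+s\ga/r(k)},
\end{equation*}
which is \eqref{ti1=}. The main obstacle I anticipate is purely bookkeeping: correctly tracking the Gamma-function prefactors and the index shift that produces $\gG(1-u/\ga)/\gG(2-u)$ from the Weierstrass products, and making sure the exponential terms ($\gam$-terms, $\log\ga$, $\rho$, the trailing $+1$) all cancel as claimed; the analytic content — absolute convergence of the infinite product under \eqref{sofie} and finiteness of the MGF for $u<\ga$ — is comparatively straightforward.
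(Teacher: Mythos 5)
Your plan follows the paper's proof almost verbatim: compute $\E e^{uZ}$ from \eqref{Z} using independence and $\E e^{tE}=(1-t)\qw$, compare the resulting $\gd_k$-weighted product against $\prodk e^{-u/(\ga k)}/(1-u/(\ga k))$ via the Weierstrass product for $\gG$, use \eqref{rho} for the exponential prefactor, and then obtain \eqref{ti1=} by substituting $u=-\ga s$ into \eqref{W}. One small slip in your bookkeeping sketch: you say the factors with $\gd_k=1$ correspond to each $\ell\in\set{2,\dots}$ ``attained exactly once,'' but it is the complementary indices, those with $\gd_k=0$, whose $r(k)$-values run bijectively through $\set{2,3,\dots}$ (this is exactly the observation behind \eqref{eq:yk}); the paper uses this to rewrite $\prod_{\gd_k=1}$ as $\prodk$ divided by $\prod_{\ell\ge2}$. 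Fixing that parenthetical, the derivation goes through exactly as you outline, and your identification of where \eqref{sofie} enters (absolute convergence of $\prodk(1-u/(\ga k))/(1-u/r(k))$) and of the range $u<\ga$ (poles of $\gG(1-u/\ga)$ and positivity of $1-u/(\ga k)$) both match the paper.
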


\begin{proof}%[Proof of \refL{LZ1}]
It is elementary that $\E e^{uE_k} =1/(1-u) $ for $u<1$.
Hence, \eqref{Z} implies
\begin{equation}\label{qn}
  \E e^{uZ} = \prod_{k\ge1:\,\gd_k=1} \frac{e^{-u/r(k)}}{1-u/r(k)}.
\end{equation}
Arguing as in \eqref{eq:yk}, this can be written
\begin{align}\label{qm}
  \E e^{uZ}
&=\prodk \frac{e^{-u/r(k)}}{1-u/r(k)} 
  \prod_{\ell=2}^\infty \frac{1-u/\ell} {e^{-u/\ell}}
  \end{align}
where the products are absolutely convergent as a consequence of
\eqref{sofie2} and simple estimates. 
Using \eqref{rho}, we rewrite this as
  \begin{align}\label{ql}
    \E e^{uZ}
=
e^{-u\rho}
\prodk \frac{e^{-u/\ga k}}{1-u/\ga k} 
\prodk \frac{1-u/\ga k}{1-u/r(k)} 
  \prod_{\ell=2}^\infty \frac{1-u/\ell} {e^{-u/\ell}}.
\end{align}
The standard product formula for the Gamma function
\cite[(5.8.2)]{NIST} can be written
$\prodk e^{z/k}/(1+z/k) =z e^{\gam z}\gG(z)=e^{\gam z}\gG(z+1)$,
and thus \eqref{ql} yields
\begin{align*}
  \E e^{uZ}
&=
e^{-u\rho}
e^{-u\ga\qw \gam} \gG(1-u/\ga) \frac{e^{-u}}{1-u} 
e^{u \gam} \gG(1-u)\qw
\prodk \frac{1-u/\ga k}{1-u/r(k)} 
, 
\end{align*}
which simplifies to \eqref{lz1}.

Finally, \eqref{ti1=} follows from \eqref{W} and \eqref{lz1}, with $u=-\ga s$.
\end{proof}
\begin{remark}\label{RZ1} 
Let $\rx:=\inf\set{r(k):\gd_k=1}\ge1$ be the first repeated value in the
sequence $r(0),r(1),\dots$.
Then the proof shows that \eqref{lz1}
 holds for $\ga\le u<\rx$ too, with the right hand side interpreted
by continuity when necessary, and that $\E e^{uZ}=\infty$  for $u\ge \rx$.
Consequently, \eqref{ti1=} holds for $s\ge-\rx/\ga$.
Furthermore, by analytic continuation, or by the proof above, \eqref{ti1=}
extends to complex $s$ with $\Re s>-\rx/\ga$.
However, $\E W^{-\rx/\ga}=\infty$.
\end{remark}

Next we bound moments of $M_n$, using a series of lemmas.
Recall $T_m:=\tau_{N_m}$ from the proof of \refT{TN2}.
We tacitly continue to assume \eqref{sofie}.

\begin{lemma}
  \label{LtT}
For every integer
$K\ge1$ there exists a constant $C_K$ such that if $m\ge 2K+2$,
then
\begin{equation}\label{ltt}
\E\bigpar{T_m/m^{1/\ga}}^{-K} \le C_K.
\end{equation}
\end{lemma}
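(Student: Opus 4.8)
The plan is to work with the exact representation $T_m=\sum_{k=0}^{m-1}e^{Y_k}E'_k$ from \eqref{tm3}, where, conditioned on $(Y_k)\xoo$, the $E'_k$ are \iid{} $\Exp(1)$ and $(Y_k)\xoo$ is independent of $(E'_k)\oxoo$. Since $Y_k$ is non-decreasing with $Y_0=0$ (\refL{L2}), I would discard all terms except those with $L\le k\le m-1$, where $L:=\floor{m/2}$, and bound
\begin{equation}\label{eq:LtTsplit}
  T_m\ge \sum_{k=L}^{m-1}e^{Y_k}E'_k\ge e^{Y_L}\sum_{k=L}^{m-1}E'_k =: e^{Y_L}G_n,
  \qquad n:=m-L=\ceil{m/2}.
\end{equation}
The key structural point is that $Y_L$ is a function of $E_1,\dots,E_L$ only, while $G_n$ is a function of $E'_L,\dots,E'_{m-1}$ only, so $Y_L$ and $G_n$ are independent, and $G_n$ is a sum of $n$ \iid{} $\Exp(1)$ variables. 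Hence $T_m^{-K}\le e^{-KY_L}G_n^{-K}$ gives $\E\bigpar{T_m^{-K}}\le \E\bigpar{e^{-KY_L}}\,\E\bigpar{G_n^{-K}}$, and \eqref{ltt} reduces to bounding this product by $C_Km^{-K/\ga}$.

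For the $G_n$-factor one has $\E\bigpar{G_n^{-K}}=\gG(n-K)/\gG(n)$, which is finite and $O_K(n^{-K})=O_K(m^{-K})$ precisely because $n=\ceil{m/2}\ge K+1$ once $m\ge 2K+2$ — this is the role of the hypothesis $m\ge2K+2$. For the $Y_L$-factor, writing $Y_L=\sum_{k\le L:\,\gd_k=1}E_k/r(k)$ and $y_L=\E Y_L$ yields the exact product
\begin{equation}
  \E\bigpar{e^{-KY_L}}=\prod_{k\le L:\,\gd_k=1}\frac{1}{1+K/r(k)}
  = e^{-Ky_L}\prod_{k\le L:\,\gd_k=1}\frac{e^{K/r(k)}}{1+K/r(k)}.
\end{equation}
Each factor $e^x/(1+x)$ with $x=K/r(k)>0$ exceeds $1$, so the last product increases in $L$ and, by the computation \eqref{qn}--\eqref{ql} with $u=-K$, is bounded above by $\E e^{-KZ}$, which is finite by \refL{LZ1} (since $-K<\ga$). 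Thus $\E\bigpar{e^{-KY_L}}\le(\E e^{-KZ})\,e^{-Ky_L}$, and by \refL{LA} one has $y_L\ge(\ga\qw-1)\log L-C$ for all $L$, so $e^{-Ky_L}\le C_K m^{-K(\ga\qw-1)}$ (using $L=\floor{m/2}\ge m/4$ and $\ga\qw-1\ge0$). Combining the two factors gives $\E\bigpar{T_m^{-K}}\le C_K m^{-K}\cdot m^{-K(\ga\qw-1)}=C_K m^{-K/\ga}$, which is \eqref{ltt}.

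The Gamma integral and the product manipulation are routine. The step that needs genuine care is the $Y_L$-factor in \eqref{eq:LtTsplit}: $\E\bigpar{e^{-KY_L}}$ must be controlled \emph{uniformly in $m$} and must carry exactly the power $m^{-K(\ga\qw-1)}$, since this is what cancels the $m^{K/\ga}$ in \eqref{ltt} against the $m^{-K}$ coming from $G_n$. Handling the event that $Y_L$ is atypically small (invisible to any crude a.s.\ lower bound on $T_m$) is what forces the use of the exact product formula together with the convergence $Y_L-y_L\to Z$ through monotone partial products; the precise asymptotics $y_L=(\ga\qw-1)\log L+O(1)$ from \refL{LA} then supply the right exponent, and the independence of $Y_L$ and $G_n$ is what lets the two negative-moment estimates multiply.
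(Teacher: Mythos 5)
Your proposal is correct and follows essentially the same route as the paper: lower-bound $T_m$ by $e^{Y_{\floor{m/2}}}$ times an independent Gamma-distributed sum, then estimate the three factors via $\E e^{-K(Y-y)}\le\E e^{-KZ}<\infty$ (each factor of the product being $\ge1$), the asymptotics $y_L=(\ga\qw-1)\log L+O(1)$ from \refL{LA}, and $\gG(n-K)/\gG(n)\le C_K n^{-K}$. The only cosmetic difference is that you split at $\floor{m/2}$ directly, whereas the paper proves the bound for $T_{2m}$ and handles odd indices by monotonicity.
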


\begin{proof}
For convenience, we first consider $T_{2m}$. By \eqref{tm3},
\begin{equation}
  T_{2m}\ge \sum_{k=m}^{2m-1} e^{Y_k}E_k' 
\ge e^{Y_m}\sum_{k=m}^{2m-1} E_k'=: e^{Y_m}U,
\end{equation}
where $U:=\sum_{k=m}^{2m-1} E_k'$ has a Gamma distribution $\Gamma(m)$ 
and is independent of $Y_m$.
Consequently,
\begin{equation}\label{tre}
  \E T_{2m}^{-K} \le \E e^{-K (Y_m-y_m)} e^{-K y_m} \E U^{-K}.
\end{equation}
We estimate the three factors on the \rhs{} separately.

For the first factor, by \eqref{Yy} and \eqref{qn}, for any real $u$,
\begin{equation}\label{Q0}
    \E e^{u(Y_m-y_m)} = \prod_{k\le m:\,\gd_k=1} \frac{e^{-u/r(k)}}{1-u/r(k)}
\le 
  \E e^{uZ},
\end{equation}
since each factor in \eqref{qn} is $\ge1$ (by the explicit form or by
Jensen's inequality). In particular, 
\begin{equation}\label{Q1}
\E e^{-K(Y_m-y_m)}\le \E e^{-KZ}=C_K,
\end{equation}
using \eqref{lz1} to see that  $\E e^{-KZ}<\infty$.

For the second factor in \eqref{tre}, \refL{LA} yields
\begin{equation}\label{Q2}
  e^{-K y_m} = m^{-K(\ga\qw-1)} C_K e^{o(1)}
\le m^{-K(\ga\qw-1)} C_K.
\end{equation}

Finally, for the third factor, we use the fact that $U\xsim\gG(m)$ and compute
\begin{equation}\label{Q3}
  \E U^{-K} =\frac{1}{\gG(m)}  \intoo x^{-K} x^{m-1} e^{-x} \dd x
=\frac{\gG(m-K)}{\gG(m)}
\le C_K m^{-K},
\end{equation}
for $m\ge K+1$.

Combining \eqref{Q1}, \eqref{Q2} and \eqref{Q3} with \eqref{tre}, we find
that if $m\ge K+1$, then
\begin{equation}
  \E T_{2m}^{-K} \le C_K m^{-K/\ga}.
\end{equation}
This proves \eqref{ltt} for even $m\ge 2K+2$, and the case of odd $m$
follows because $T_{2m+1}\ge T_{2m}$.
\end{proof}

\begin{lemma}
  \label{LtN}
For every
$K>0$ there exists a constant $C_K$ such that 
%if $m\ge 1$, then
\begin{equation}\label{ltn}
\E\bigpar{N_m/m^{1/\ga}}^{-K} \le C_K,
\qquad m\ge1.
\end{equation}
\end{lemma}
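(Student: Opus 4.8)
The plan is to deduce \eqref{ltn} from \refL{LtT} by comparing $N_m$ with the continuous-time quantity $T_m=\tau_{N_m}$ introduced in the proof of \refT{TN2}. The key observation is that the arrival times $(\tau_n)_n$ are, by construction, independent of the values $(X_n)_n$, while the index $N_m$ of the $m$-th accepted candidate is a function of $(X_n)_n$ alone (it is determined by the thresholds $Y_k$, which depend only on the values). Hence, conditioned on $(X_n)_n$, the quantity $T_m=\tau_{N_m}$ is a sum of $N_m$ \iid{} $\Exp(1)$ waiting times, so that $T_m\xsim\gG(N_m)$ conditionally and, in particular,
\begin{equation}
\E\bigpar{T_m\mid (X_n)_n}=N_m .
\end{equation}

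First I would record the elementary fact that $t\mapsto t^{-K}$ is convex on $(0,\infty)$, so that (conditional) Jensen's inequality gives
\begin{equation}
\E\bigpar{T_m^{-K}\mid (X_n)_n}\ge \bigpar{\E(T_m\mid (X_n)_n)}^{-K}=N_m^{-K},
\end{equation}
and therefore, taking expectations, $\E N_m^{-K}\le \E T_m^{-K}$. Multiplying by $m^{K/\ga}$ yields
\begin{equation}
\E\bigpar{N_m/m^{1/\ga}}^{-K}\le \E\bigpar{T_m/m^{1/\ga}}^{-K},
\end{equation}
and the right-hand side is at most $C_K$ for $m\ge 2K+2$ by \refL{LtT}. (Lemma~\ref{LtT} is stated for integer $K$; for general $K>0$ one can either check that its proof goes through verbatim for real $K$, or apply it with $\ceil K$ in place of $K$ and use $x^{-K}\le 1+x^{-\ceil K}$ for $x>0$.)

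It remains to handle the finitely many indices with $1\le m< 2K+2$, and for these I would just use the deterministic bound $N_m\ge m\ge 1$, which gives $N_m^{-K}\le 1$ and hence $\E\bigpar{N_m/m^{1/\ga}}^{-K}=m^{K/\ga}\E N_m^{-K}\le (2K+1)^{K/\ga}$; enlarging $C_K$ then makes \eqref{ltn} hold for all $m\ge1$. I do not expect a genuine obstacle: the only step needing a little care is the independence of $N_m$ from the Poisson arrival process, which is precisely what validates the conditional Jensen comparison and lets \refL{LtT} carry all the analytic weight.
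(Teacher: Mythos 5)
Your argument is correct, and it takes a genuinely different (and in fact simpler) route than the paper. The paper also reduces to comparing $N_m$ with $T_m=\tau_{N_m}$, but it does so via the \CSineq: it writes
\begin{equation*}
\lrpar{\E\bigpar{N_m/m^{1/\ga}}^{-K}}^2\le \E\bigpar{T_m/m^{1/\ga}}^{-2K}\,\E\bigpar{T_m/N_m}^{2K},
\end{equation*}
bounds the first factor by \refL{LtT} with exponent $2K$ (hence needs $m\ge 4K+2$), and controls the second factor by conditioning on $N_m=n$ and invoking the law of large numbers with moment convergence for $\tau_n/n$ (citing Gut) to get $\E(\tau_n/n)^{2K}\le C_K$ uniformly in $n$. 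Your observation that $N_m$ is $\sigma\bigpar{(X_k)_k}$-measurable while $(\tau_n)_n$ is independent of the values, so that conditionally $T_m\xsim\Gamma(N_m)$ with $\E\bigpar{T_m\mid (X_k)_k}=N_m$, lets conditional Jensen (convexity of $t\mapsto t^{-K}$) give the one-line bound $\E N_m^{-K}\le \E T_m^{-K}$ directly; this avoids the moment-convergence input entirely, uses \refL{LtT} only with exponent $K$ (so $m\ge 2K+2$ suffices), and is in that sense slightly sharper and more elementary. Your treatments of non-integer $K$ (either rerunning the proof of \refL{LtT} or using $x^{-K}\le 1+x^{-\ceil K}$; the paper instead reduces to integer $K$ by Lyapunov's inequality) and of the finitely many small $m$ via $N_m\ge m$ match the paper's and are fine. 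The only point deserving the care you already gave it is the measurability/independence claim validating the conditional Gamma distribution, and that claim is exactly as stated in the construction in the proof of \refT{TN2}.
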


\begin{proof}
We may assume that $K$ is an integer (by Lyapunov's inequality).
Furthermore,  $N_m\ge m\ge1$, and thus $\E N_m^{-K}\le1$.
It follows that \eqref{ltn} holds trivially for $m< 4K+2$, so we may assume
$m\ge 4K+2$.
In this case, we use the \CSineq{} and obtain by \refL{LtT} 
\begin{equation}\label{qq}
\lrpar{  \E \Bigparfrac{N_m}{m^{1/\ga}}^{-K}}^2
\le
  \E \Bigparfrac{T_m}{m^{1/\ga}}^{-2K}
  \E \Bigparfrac{N_m}{T_m}^{-2K}
\le C_K   \E \Bigparfrac{T_m}{N_m}^{2K}.
\end{equation}
Furthermore, conditioned on $N_m=n$, $T_m=\tau_n$ is the sum of $n$ \iid{}
waiting times $E''_i\xsim\Exp(1)$. Since $E''_i$ have moments of all orders, 
the law of large numbers holds with moment convergence 
\cite[Theorem 6.10.2]{Gut}, and thus $\E \tau_n^{2K}/n^{2K}\to1$ as $\ntoo$,
and therefore $\E \tau_n^{2K}/n^{2K}\le C_K$ for all $n\ge1$.
Consequently,
\begin{equation}
   \E \Bigpar{\Bigparfrac{T_m}{N_m}^{2K}\Bigm| N_m=n}
=\frac{\E \tau_n^{2K}}{n^{2K}}\le C_K
\end{equation}
and thus
\begin{equation}\label{qa}
   \E \Bigparfrac{T_m}{N_m}^{2K}
=    \E \Bigpar{\Bigparfrac{T_m}{N_m}^{2K}\Bigm| N_m}
\le C_K.
\end{equation}
The result follows by \eqref{qq} and \eqref{qa}.
\end{proof}

\begin{lemma}
  \label{LtM}
For every 
$K>0$ there exists a constant $C_K$ such that 
%if $n\ge 1$, then
\begin{equation}\label{ltm}
\E\bigpar{M_n/n^{\ga}}^{K} \le C_K,
\qquad n\ge1.
\end{equation}
\end{lemma}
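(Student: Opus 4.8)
The plan is to transfer the lower bound on the negative moments of $N_m$ from \refL{LtN} into an upper bound on the positive moments of $M_n$, via the duality $M_n\ge m\iff N_m\le n$ of \eqref{eq:NM}. As in the proof of \refL{LtN}, by Lyapunov's inequality we may assume that $K$ is a positive integer. Then the layer-cake identity for the nonnegative integer-valued variable $M_n$, together with $m^K-(m-1)^K\le Km^{K-1}$ and \eqref{eq:NM}, gives
\begin{equation*}
  \E M_n^K=\sum_{m\ge1}\bigpar{m^K-(m-1)^K}\P(M_n\ge m)
  \le K\sum_{m\ge1}m^{K-1}\P(N_m\le n).
\end{equation*}

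Next I would estimate $\P(N_m\le n)$. Fix an auxiliary exponent $L>\ga K$. By Markov's inequality $\P(N_m\le n)=\P\bigpar{N_m^{-L}\ge n^{-L}}\le n^L\E N_m^{-L}$, and \refL{LtN} bounds $\E N_m^{-L}\le C_L m^{-L/\ga}$; combined with the trivial bound $\P(N_m\le n)\le1$ this yields $\P(N_m\le n)\le\min\bigpar{1,\,C_Ln^Lm^{-L/\ga}}$. Substituting this and splitting the sum at $m_0:=\ceil{n^\ga}$: the terms with $m\le m_0$ contribute at most $K\sum_{m\le m_0}m^{K-1}\le Cm_0^K\le Cn^{\ga K}$ (using $m_0\le 2n^\ga$ for $n\ge1$), and the terms with $m>m_0$ contribute at most
\begin{equation*}
  KC_Ln^L\sum_{m>m_0}m^{K-1-L/\ga}\le Cn^L m_0^{\,K-L/\ga}\le Cn^L n^{\ga K-L}=Cn^{\ga K},
\end{equation*}
where I used that $K-1-L/\ga<-1$, so the tail sum is at most a constant times $m_0^{K-L/\ga}$. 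Adding the two contributions gives $\E M_n^K\le C_Kn^{\ga K}$ uniformly for $n\ge1$, i.e.\ \eqref{ltm}.

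I do not anticipate a genuine obstacle; this is a routine combination of the duality \eqref{eq:NM}, a Markov estimate, and \refL{LtN}. The only delicate point is the choice of the auxiliary exponent: $L$ must be taken strictly larger than $\ga K$ so that the exponent $K-1-L/\ga$ of the tail sum is below $-1$, and one then checks that the small-$m$ and large-$m$ parts of the split both land at the common order $n^{\ga K}$, which is exactly what makes the estimate uniform in $n$.
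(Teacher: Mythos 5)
Your proposal is correct and follows essentially the same route as the paper: both transfer the negative-moment bound of \refL{LtN} through the duality \eqref{eq:NM} and Markov's inequality, with an auxiliary exponent strictly larger than $\ga K$ (the paper takes $\KK=(K+1)\ga$ and bounds the tail $\P(M_n\ge xn^\ga)\le C_Kx^{-(K+1)}$ before integrating, while you sum the layer-cake identity over integer thresholds and split at $m_0=\ceil{n^\ga}$). The difference is only in bookkeeping, not in the underlying argument.
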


\begin{proof}
  Let $x>0$ and $n\ge1$, and let $m:=\ceil{x n^{\ga}}$.
If $M_n\ge x n^{\ga}$, then $M_n\ge m$, and thus, by \eqref{eq:NM},
 $N_m\le n \le x^{-1/\ga}m^{1/\ga}$.
Consequently, for any $\KK>0$, by Markov's inequality and \refL{LtN},
\begin{equation}
  \P \bigpar{M_n\ge x n^{\ga}}
\le 
  \P \bigpar{N_m\le x^{-1/\ga} m^{1/\ga}}
\le 
x^{-\KK/\ga}
\E\bigpar{N_m/m^{1/\ga}}^{-\KK} 
\le C_{\KK} x^{-\KK/\ga}.
\end{equation}
Choosing $\KK:=(K+1)\ga$, we obtain
$  \P \bigpar{M_n\ge x n^{\ga}}\le C_K x^{-(K+1)}$, which implies \eqref{ltm}.
\end{proof}

\begin{proof}[Proof of \refT{TI1}]
We have shown the \as{} convergence \eqref{limW} in Lemma \ref{LB}, and moment
convergence follows from this and the uniform estimate in \refL{LtM}.
Finally, the moments of $W$ are computed in \refL{LZ1}.
\end{proof}

Although perhaps of less interest, we show further
that the moment convergence in
\refT{TI1} holds also for some, but not all, $s<0$.
Let $\rx\ge1$ be as in \refR{RZ1}.

\begin{lemma}
  \label{LtN+}
For every real $u<\rx$, there exists a constant $C_u$ such that 
%if $m\ge 1$, then
\begin{equation}
%  \E\bigpar{T_m/m^{1/\ga}}^{u} &\le C_u,\label{ltt+}\\
\E\bigpar{N_m/m^{1/\ga}}^{u} \le C_u,
\qquad m\ge1.\label{ltn+}
\end{equation}
\end{lemma}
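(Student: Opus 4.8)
The plan is to control $N_m$ through its conditional law given the threshold sequence, thereby reducing the whole statement to a moment bound for $S_m:=\sum_{k=0}^{m-1}e^{Y_k}$. Recall from \refT{TN1} that, conditioned on $\cF:=\sigma\bigpar{(Y_k)_{k\ge0}}$, we have $N_m=\sum_{k=0}^{m-1}V_k$ with the $V_k$ (conditionally) independent and $V_k\xsim\Ge\bigpar{e^{-Y_k}}$; in particular $\E[N_m\mid\cF]=\sum_{k=0}^{m-1}e^{Y_k}=S_m$, and more generally $\E[V_k^u\mid\cF]\le C_u e^{uY_k}$ for every real $u\ge0$ (an elementary bound on the moments of a geometric variable, obtained from the integer case by Lyapunov's inequality). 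The case $u\le0$ of \eqref{ltn+} is already \refL{LtN} (and $u=0$ is trivial), so from now on assume $0<u<\rx$.

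The core estimate is: for every $u$ with $0\le u<\rx$ there is a constant $C_u$ such that $\E S_m^u\le C_u m^{u/\ga}$ for all $m\ge1$. To prove it I would use that the threshold is nondecreasing (\refL{L2}), so $e^{Y_k}\le e^{Y_{m-1}}$ for $k\le m-1$ and hence $S_m\le m\,e^{Y_{m-1}}$. Therefore $\E S_m^u\le m^u e^{uy_{m-1}}\E e^{u(Y_{m-1}-y_{m-1})}$, where by \eqref{Q0} the last factor is $\le\E e^{uZ}$, which is finite precisely because $u<\rx$ (\refR{RZ1}). Since \refL{LA} gives $y_{m-1}=(\ga\qw-1)\log m+O(1)$, we have $e^{uy_{m-1}}\le C_u m^{u(\ga\qw-1)}$, and multiplying the two powers gives $\E S_m^u\le C_u m^{u/\ga}$ (the case $m=1$ is trivial, since $N_1=1$ a.s.).

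Granting this, the lemma follows by splitting on the size of $u$. If $0<u\le1$, concavity of $t\mapsto t^u$ and conditional Jensen give $\E[N_m^u\mid\cF]\le\bigpar{\E[N_m\mid\cF]}^u=S_m^u$, so $\E N_m^u\le\E S_m^u\le C_u m^{u/\ga}$. If $1\le u<\rx$, I would instead apply Minkowski's inequality, conditionally on $\cF$, to the sum $N_m=\sum_k V_k$:
\[
\bigpar{\E[N_m^u\mid\cF]}^{1/u}\le\sum_{k=0}^{m-1}\bigpar{\E[V_k^u\mid\cF]}^{1/u}\le C_u\sum_{k=0}^{m-1}e^{Y_k}=C_u S_m ,
\]
so again $\E N_m^u\le C_u\E S_m^u\le C_u m^{u/\ga}$. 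Together these ranges exhaust all real $u<\rx$, which is \eqref{ltn+}.

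The step I expect to need the most care is the range $0<u<1$ when $\rx=1$, a case that really occurs (\eg{} for \hpercentile{1/2}, whose sequence $r(m)$ starts $1,1,1,2,\dots$, so that $\rx=1$): then $\E e^{Z}=\infty$ and $\E S_m=\infty$, so one can neither bound $\E N_m^u$ by $\bigpar{\E N_m}^u$ nor use the crude subadditive estimate $\E N_m^u\le\sum_k\E V_k^u$, which overshoots the target by a factor $m^{1-u}$. Routing through conditional Jensen with the exact identity $\E[N_m\mid\cF]=S_m$, and then estimating $\E S_m^u$ only for exponents $u<\rx$, is precisely what circumvents this difficulty; the remaining ingredients are standard.
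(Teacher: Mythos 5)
Your proof is correct and follows a genuinely different, somewhat cleaner route than the paper's. The paper passes to continuous time, writing $T_m=\tau_{N_m}=\sum_{k<m}e^{Y_k}E'_k$, bounds $\E T_m^u$ by Minkowski together with $\gl_m\le Cm^{1/\ga}$ when $1\le u<\rx$, handles $\rx=1$ (where $\E e^{uZ}=\infty$ for $u\ge1$, so Minkowski is unavailable) by a separate decomposition starting from $k_0:=\min\set{k:r(k)=2}$ and subadditivity of $x\mapsto x^u$, and finally transfers the $T_m$ bound back to $N_m$ via a lower tail estimate for the Poisson arrival times $\tau_n$. You instead stay with the discrete representation $N_m=\sum_{k<m}V_k$ from \refT{TN1}, note that $\E\bigsqpar{N_m\mid(Y_k)_k}=S_m:=\sum_{k<m}e^{Y_k}$, and reduce the lemma to the single estimate $\E S_m^u\le C_u m^{u/\ga}$ valid for $0\le u<\rx$; the bound $S_m\le m\,e^{Y_{m-1}}$ is cruder than the paper's $\gl_m$-estimate but loses only a multiplicative constant, and it needs only \eqref{Q0}, \refR{RZ1} and \refL{LA}. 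The split into conditional Jensen for $0<u\le1$ and conditional Minkowski for $1\le u<\rx$ then covers all exponents, and, as you correctly observe, the delicate case $\rx=1$ requires only $u<1$, which falls entirely within the Jensen range, so no separate argument is needed. What the paper's route buys is that the $T_m$ machinery is already set up in \refT{TN2} and reused across \refL{LtT} and \refL{LtN}; what your route buys is a shorter, self-contained argument that avoids both the continuous-time detour and the $\rx=1$ special case. The only estimate worth recording explicitly is the uniform geometric moment bound $\E\bigsqpar{V_k^u\mid\cF}\le C_u e^{uY_k}$ that you invoke: it is Jensen for $u\le1$ and follows from the integer case plus Lyapunov's inequality for $u>1$, uniformly in the success probability.
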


\begin{proof}
The case $u<0$ is \refL{LtN}, and $u=0$ is trivial, so we may assume $u>0$.
We consider again first $T_m=\tau_{N_m}$.

Assume first that $r(1)=2$, so that $\rx\ge2$. It then suffices to prove
\eqref{ltn+} for $1\le u<\rx$, so we assume this.
Recall from \refR{RZ1} that then $\E e^{uZ}<\infty$.
Hence, \eqref{tm3}, Minkowski's inequality, \eqref{Q0}, \eqref{gl} and
\eqref{la2} yield, 
with $\norm{X}_u:=\xpar{\E |X|^u}^{1/u}$,
\begin{align}\label{kf}
 \norm{T_m}_u 
&\le
\sum_{k=0}^{m-1}\bignorm{e^{Y_k}E'_k}_u
=\sum_{k=0}^{m-1} e^{y_k}\bigpar{\E e^{u(Y_k-y_k)}}^{1/u} \norm{E'_k}_u
\\&
\le
C_u\bigpar{\E e^{u Z}}^{1/u}\sum_{k=0}^{m-1}  e^{y_k}
= C_u \gl_m \le C_u m^{1/\ga}.
\end{align}
In other words, 
\begin{align}\label{kf2}
  \E T_m^u \le
C_u  m^{u/\ga}, \qquad m\ge1.
\end{align}

The argument above fails if $r(1)=1$, since then $\rx=1$ and
$\E e^{uZ}=\infty$ for every $u\ge1$, see again \refR{RZ1}. 
(And we need $u\ge1$ in
order to use Minkowski's inequality.)
In this case, 
let $k_0:=\min\set{k:r(k)=2}$ and consider
\begin{equation}
  \tT_m:=\sum_{k=k_0}^{m-1} e^{Y_k-Y_{k_0}} E'_k, \qquad m\ge1.
\end{equation}
We have, \cf{} \eqref{Yy} and \eqref{qn}, 
\begin{equation}
  \E e^{Y_k-Y_{k_0}-(y_k-y_{k_0})} 
= 
\prod_{k_0< j \le m:\,\gd_j=1} \frac{e^{-1/r(j)}}{1-1/r(j)}
\le 
\prod_{j=k_0}^\infty\frac{e^{-1/r(j)}}{1-1/r(j)}<\infty,
\end{equation}
and thus, using \eqref{gl} and \eqref{la2} as above,
\begin{equation}
 \E \tT_m:=\sum_{k=k_0}^{m-1} \E e^{Y_k-Y_{k_0}} 
\le C \sum_{k=k_0}^{m-1} e^{y_k-y_{k_0}} 
\le C\gl_m
\le C m^{1/\ga}.
\end{equation}
Consequently, for $0<u<1=\rx$, using $T_m=T_{k_0}+e^{Y_{k_0}}\tT_m$
and the subadditivity of $x\mapsto x^u$,
\begin{align}
    \E T_m^u 
&\le \E T_{k_0}^u + \E e^{uY_{k_0}} \E \tT_m^u
\le \sum_{k=0}^{k_0-1}\E e^{uY_k} \E (E'_k)^u 
  + \E e^{uY_{k_0}} \bigpar{\E \tT_m}^u
\notag\\&
\le C_u+C_u m^u.
\end{align}
Hence, \eqref{kf2} holds in this case too.

Finally, by the law of large numbers, 
$\P(\tau_n>n/2)\to1$ as \ntoo, and thus $\P(\tau_n>n/2)\ge c$ for every $n\ge1$.
Hence,
\begin{equation}
  \E\bigpar{T_m^u\mid N_m} \ge c \xpar{N_m/2}^u = c_u N_m^u
\end{equation}
and thus
\begin{equation}
\E T_m^u \ge c_u \E N_m^u.
\end{equation}
Consequently, \eqref{ltn+} follows from \eqref{kf2}.
\end{proof}

\begin{lemma}
  \label{LtM-}
For every $s>-\rx/\ga$, 
\begin{equation}\label{ltm-}
\E\bigpar{M_n/n^{\ga}}^{s} \le C_s.
\end{equation}
\end{lemma}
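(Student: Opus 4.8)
The plan is to treat the two signs of $s$ separately. For $s\ge0$ the bound \eqref{ltm-} is exactly \refL{LtM} (with $s=0$ trivial, since $M_n\ge1$), so it remains to handle $-\rx/\ga<s<0$. Writing $s=-t$ with $0<t<\rx/\ga$, hence $\ga t<\rx$, the goal becomes $\E M_n^{-t}\le C_t\,n^{-\ga t}$ for all $n\ge1$. Structurally this is the mirror image of \refL{LtM}: there the \emph{upper} tail of $M_n$ was controlled through the \emph{negative} moments of $N_m$ (\refL{LtN}), whereas here the \emph{lower} tail of $M_n$ must be controlled through the \emph{positive} moments of $N_m$, and these are available only up to order $\rx$ by \refL{LtN+}; this is precisely why the condition $\ga t<\rx$ enters, in agreement with $\E W^{-\rx/\ga}=\infty$ from \refR{RZ1}.

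First I would pass to the distribution function via the layer-cake identity. Since $M_n\ge1$ (the first candidate is always accepted because $r(0)=1$),
\begin{equation*}
  \E M_n^{-t}=\int_1^\infty t\,x^{-t-1}\,\P(M_n<x)\dd x .
\end{equation*}
For $x\ge1$ one has $M_n<x\iff M_n\le\ceil x-1\iff N_{\ceil x}>n$ by \eqref{eq:NM}, so $\P(M_n<x)=\P\bigpar{N_{\ceil x}>n}$. Next I would fix an exponent $u$ with $\ga t<u<\rx$ — possible exactly because $\ga t<\rx$ — and apply Markov's inequality together with \refL{LtN+}:
\begin{equation*}
  \P\bigpar{N_{\ceil x}>n}\le n^{-u}\,\E N_{\ceil x}^{\,u}\le C_u\,n^{-u}\ceil x^{\,u/\ga}\le C'_u\,n^{-u}x^{u/\ga},\qquad x\ge1 .
\end{equation*}

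It then remains to combine this with the trivial bound $\P(M_n<x)\le1$ and split the integral at the crossover point $x_0$ (a constant times $n^\ga$) where the two estimates balance, i.e.\ $C'_u n^{-u}x_0^{u/\ga}=1$. On $[1,x_0]$ the polynomial bound applies, and since $u/\ga-t>0$ the integral $\int_1^{x_0}x^{u/\ga-t-1}\dd x$ is at most a constant times $x_0^{u/\ga-t}$, so this part contributes at most a constant times $n^{-u}x_0^{u/\ga-t}=x_0^{-t}$, which is of order $n^{-\ga t}$; on $[x_0,\infty)$ the trivial bound gives $\int_{x_0}^\infty x^{-t-1}\dd x=t^{-1}x_0^{-t}$, again of order $n^{-\ga t}$. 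Hence $\E M_n^{-t}\le C_t\,n^{-\ga t}$. For the finitely many $n$ so small that $x_0<1$ (where the split degenerates) the crude bound $\E M_n^{-t}\le1$ already dominates $C_t n^{-\ga t}$ after enlarging $C_t$, so \eqref{ltm-} holds for all $n\ge1$.

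The only substantive input is \refL{LtN+}, already in hand; the rest is the same crossover computation used in the proof of \refL{LtM}. The one point deserving care is that $u$ must sit strictly between $\ga t$ and $\rx$, so the method genuinely fails at $t=\rx/\ga$ — as it must, given \refR{RZ1}. I do not foresee any real obstacle beyond bookkeeping.
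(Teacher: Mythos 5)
Your proposal is correct and follows essentially the same route as the paper's proof: reduce to $s<0$ via \refL{LtM}, use Markov's inequality together with \refL{LtN+} at a slack exponent strictly inside $(0,\rx)$ to obtain a polynomial lower-tail bound on $M_n$, and then integrate. The paper phrases the tail bound as $\P\bigpar{M_n+1<xn^\ga}\le C_u x^{-t}$ for a fixed $t$ with $-\rx/\ga<t<s$ and then cites the resulting moment bound for $M_n+1$, whereas you work directly with $\P(M_n<x)=\P(N_{\ceil x}>n)$ and spell out the layer-cake/crossover computation; this is only a bookkeeping difference, and your choice of a free parameter $u\in(\ga t,\rx)$ plays exactly the role of the paper's choice of $t$ strictly between $-\rx/\ga$ and $s$.
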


\begin{proof}
By \refL{LtM}, it suffices to consider $s<0$. Then, let $-\rx/\ga<t<s$ and
$u:=-\ga t\in (0,\rx)$.
We argue as in the proof of \refL{LtM} with minor modifications. Let $x>0$
and let $m:=\floor{x n^\ga}$.
Then, by \refL{LtN+},
\begin{align}
  \P\bigpar{M_n+1 < x n^{\ga}}
&
\le   \P\bigpar{M_n < m} = \P(N_m>n)
\le n^{-u} \E N_m^{u} 
\nonumber\\&
\le C_u m^{u/\ga} n^{-u} \le C_u x^{u/\ga} = C_u x^{-t}.
\end{align}
It follows that, since trivially $1+M_n\le 2M_n$,   
\begin{align}
  \E M_n^s \le 2^{-s}\E (M_n+1)^s \le C_s n^{s\ga}.
\end{align}
\end{proof}

\begin{theorem}\label{TM-}
  The moment convergence \eqref{ti1} holds for every real $s>-\rx/\ga$, and
  more generally for every complex $s$ with $\Re s >-\rx/\ga$, with a finite
  limit. 
On the other hand, if $s\le -\rx/\ga$, then $\E M_n^s\to\infty$.
\end{theorem}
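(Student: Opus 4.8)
The plan is to obtain \refT{TM-} as a short consequence of three facts already in place: the \as{} convergence $M_n/n^\ga\asto W$ of \refL{LB}; the uniform moment bounds of Lemmas~\ref{LtM} and~\ref{LtM-}; and the exact range of finiteness of $\E W^s$ recorded in \refL{LZ1} and \refR{RZ1}. I will use throughout that $M_n\ge1$ for all $n\ge1$ (the first candidate is always accepted, since $r(0)=1$), so that $M_n^s$ is unambiguously defined for every complex $s$, and that $0<W<\infty$ \as{} (because $Z$ is \as{} finite by \refL{LZ} and $W=ce^{-\ga Z}$ with $c:=e^{(\ga-1)\gam-\ga\rho-\ga}>0$ by \eqref{W}); hence $(M_n/n^\ga)^s\asto W^s$ by continuity of the principal power on $(0,\infty)$.

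For the convergence part, I would first treat real $s>-\rx/\ga$ (the case $s=0$ being trivial, and $s\ge0$ already covered by \refT{TI1}). The key step is a uniform-integrability estimate: choose a real exponent $t$ lying strictly beyond $s$ on the side away from $0$ --- namely $t:=s+1$ if $s>0$, and any $t$ with $-\rx/\ga<t<s$ if $-\rx/\ga<s<0$ --- so that $p:=t/s>1$ and
\[
\E(M_n/n^\ga)^{sp}=\E(M_n/n^\ga)^{t}\le C_t\qquad(n\ge1),
\]
by \refL{LtM} when $t>0$ and by \refL{LtM-} when $t<0$. Thus $\{(M_n/n^\ga)^s\}_{n}$ is bounded in $L^p$ for some $p>1$, hence uniformly integrable, and combined with $(M_n/n^\ga)^s\asto W^s$ this upgrades the \as{} convergence to $L^1$-convergence, i.e.\ $\E(M_n/n^\ga)^s\to\E W^s$, which is \eqref{ti1}; the limit equals the finite product in \eqref{ti1=} by \refL{LZ1} and \refR{RZ1}. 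For complex $s$ with $\Re s>-\rx/\ga$ I would reduce to the real case: $\bigabs{(M_n/n^\ga)^s}=(M_n/n^\ga)^{\Re s}$, so $\{(M_n/n^\ga)^s\}_n$ is uniformly integrable by the real case applied to $\Re s$; with the \as{} convergence this gives $\E(M_n/n^\ga)^s\to\E W^s$, which is finite and given by the analytically continued formula \eqref{ti1} (\refR{RZ1}).

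For the divergence part, $s\le-\rx/\ga$: here the assertion ``$\E M_n^s\to\infty$'' should be understood, consistently with \eqref{ti1}, as $\E(M_n/n^\ga)^s=n^{-\ga s}\E M_n^s\to\infty$ (note $M_n^s\le1$, so the unnormalised moment cannot diverge). Since $W=ce^{-\ga Z}$, $\E W^s=c^s\,\E e^{-\ga sZ}$ with $-\ga s\ge\rx$, and $\E e^{uZ}=\infty$ for $u\ge\rx$ by \refR{RZ1}, so $\E W^s=\infty$. Because $(M_n/n^\ga)^s\asto W^s\ge0$, Fatou's lemma gives $\liminf_n\E(M_n/n^\ga)^s\ge\E W^s=\infty$, hence $\E(M_n/n^\ga)^s\to\infty$.

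I do not expect a genuine obstacle here: the hard analytic work is already done in Lemmas~\ref{LtM} and~\ref{LtM-} (the uniform moment bounds, via the inversion $M_n\leftrightarrow N_m$ and the estimates for $T_m$) and in \refR{RZ1} (the sharp threshold $-\rx/\ga$ for $\E W^s<\infty$). The points that do require a little care are: picking the auxiliary exponent $t$ on the correct side of $s$ so that boundedness in $L^p$ with $p>1$ applies (this side differs for $s>0$ and for $-\rx/\ga<s<0$); reducing the complex case to the real case through $|z^s|=|z|^{\Re s}$, which is where $M_n\ge1$ is used; and interpreting ``$\E M_n^s\to\infty$'' as divergence of the normalised moment.
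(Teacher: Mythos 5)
Your proposal is correct and follows essentially the same route as the paper: a.s.\ convergence $(M_n/n^\ga)^s\asto W^s$ from \refL{LB}, uniform integrability from the moment bounds of \refLs{LtM} and~\ref{LtM-} (for $s\ge0$ and $-\rx/\ga<s<0$ respectively), reduction of the complex case to the real case via $|z^s|=z^{\Re s}$, and Fatou's lemma for the divergence part. Your spelling-out of the $L^p$-boundedness step (choosing an auxiliary exponent $t$ strictly beyond $s$ on the side away from $0$) and your observation that, since $M_n\ge1$, the conclusion for $s\le-\rx/\ga$ must be read as divergence of the \emph{normalised} moment $\E M_n^s/n^{\ga s}$ (which is what the paper's own proof actually establishes) are both accurate and fill in details the paper leaves implicit.
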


\begin{proof}
Note that the \rhs{} of \eqref{ti1} is finite for $\Re s>-\rx/\ga$
and an analytic function of $s$ in that half-plane, see \refR{RZ1}.

\refL{LtM-} implies uniform integrability of $(M_n/n^{\ga})^s$ for every
$s\in(-\rx/\ga,0)$, and thus by \refT{TI1} for every  real $s>-\rx/\ga$,
which implies \eqref{ti1} for $\Re s>-\rx/\ga$.
On the other hand, if $s\le -\rx/\ga$, then $\E W^s=\infty $, see \refR{RZ1},
and thus $\E M^s/n^{\ga s}\to\infty$ by \eqref{limW} and Fatou's lemma.
\end{proof}

\section{Linear-periodic rank thresholds}\label{Slarge++}
In this section we restrict $r(m)$ further to an important case
where we can evaluate the moments in \eqref{ti1=} explicitly.
We assume that 
\begin{equation}\label{per}
  r(m+q) = r(m)+\nu, \qquad m\ge1,
\end{equation}
for some integers $\nu$ and $q$
with $1\le\nu\le q$.
Hence, with $\ga:=\nu/q\in\ooi$, $r(m)-\ga m$ is periodic. 
In particular, $r(m)=\ga m+O(1)$, 
and thus the results of \refSs{Slarge}--\ref{Slinear} apply.
Moreover, it is obivious from \eqref{hmedian}--\eqref{hpercentile} that
\hmedian{} satisfies \eqref{per} with $\nu=1$, $q=2$, and that 
\hpercentile{\ga} satisfies \eqref{per} whenever $\ga=\nu/q$ is rational.

When \eqref{per} holds, the asymptotic moments can be evaluated explicitly;
for convenience in applications, we give several equivalent formulas. 
\begin{theorem}\label{Tper}
  Suppose that \eqref{per} holds, with $1\le\nu\le q$, and let $\ga:=\nu/q$.
Then,  for $0\le s<\infty$, as \ntoo,
\begin{align}\label{tomas}
  \E M_n^s/n^{\ga s} \to 
\E W^s
&=
%&\frac{\gG(s+1)}{\gG(s\ga+2)} \prodk \frac{1+s/k}{1+s\ga /r(k)}
%\\&= 
\frac{\gG(s+1)}{\gG(s\ga+2)} 
\prodiq \frac{\gG\bigpar{\frac{i}q}\gG\bigpar{\frac{s}{q}+\frac{r(i)}\nu}}
 {\gG\bigpar{\frac{r(i)}\nu}\gG\bigpar{\frac{s}{q}+\frac{i}q}}
\\&\label{tomas2}
=
\frac{ q^{s}}{\gG(s\ga+2)} 
\prodiq \frac{\gG\bigpar{\frac{s}{q}+\frac{r(i)}\nu}}
 {\gG\bigpar{\frac{r(i)}\nu}}
\\&\label{tomas3}
=
\frac{ q^{s}}{\nu^{\ga s}}
\prodiq \frac{\gG\bigpar{\frac{s}{q}+\frac{r(i)}\nu}} 
  {\gG\bigpar{\frac{r(i)}\nu}}
\prod_{j=2}^{\nu+1} \frac
{\gG\bigpar{\frac{j}{\nu}}}
{\gG\bigpar{\frac{s}{q}+\frac{j}{\nu}}}
\\&\label{tomas4}
=
\frac{ q^{s+1}}{\nu^{\ga s}(\nu s+q)}
\prodiq \frac{\gG\bigpar{\frac{s}{q}+\frac{r(i)}\nu}} 
  {\gG\bigpar{\frac{r(i)}\nu}}
\prod_{j=1}^{\nu} \frac
{\gG\bigpar{\frac{j}{\nu}}}
{\gG\bigpar{\frac{s}{q}+\frac{j}{\nu}}}
.
%<\infty.
% \\
%   \gG(nz) = (2\pi)^{(1-n)/2} n^{nz-1/2} \prod_{k=0}^{n-1}
%   \gG\Bigpar{z+\frac{k}{n}}.
% \\
%   \gG(s+1) = (2\pi)^{(1-q)/2} q^{s+1/2} \prod_{k=1}^{q}
%   \gG\Bigpar{\frac{s}{q}+\frac{k}{q}}.
% \\
%   \gG(s\ga+2) = (2\pi)^{(1-\nu)/2} \nu^{s\ga+3/2} \prod_{k=0}^{\nu-1}
%   \gG\Bigpar{\frac{s}{q}+\frac{2}{\nu}+\frac{k}{\nu}}.
\end{align}
\end{theorem}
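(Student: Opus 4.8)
The plan is to start from the closed form for $\E W^s$ already available and to evaluate its infinite product explicitly using the periodicity of $r$. By \eqref{per}, $r(m)-\ga m$ is periodic, so $r(m)=\ga m+O(1)$ and \eqref{sofie} holds with $\ga=\nu/q\in\ooi$; hence \refT{TI1} applies and $\E M_n^s/n^{\ga s}\to\E W^s$ for $s\ge0$, with $\E W^s$ given by \eqref{ti1=}. Everything therefore reduces to computing
\begin{equation*}
  P(s):=\prodk\frac{1+s/k}{1+s\ga/r(k)}
\end{equation*}
in closed form. Since $r(m)\sim\ga m$, the logarithm of the $k$-th factor is $O(k^{-2})$, so $P(s)$ converges absolutely and may be regrouped freely.

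The key step is to write every $k\ge1$ uniquely as $k=jq+i$ with $j\ge0$, $i\in\set{1,\dots,q}$; iterating \eqref{per} gives $r(jq+i)=r(i)+j\nu$. A short computation rewrites the $(i,j)$-factor as
\begin{equation*}
  \frac{1+\tfrac{s}{jq+i}}{1+\tfrac{s\ga}{r(jq+i)}}
  =\frac{\bigpar{j+\tfrac iq+\tfrac sq}\bigpar{j+\tfrac{r(i)}\nu}}
        {\bigpar{j+\tfrac iq}\bigpar{j+\tfrac{r(i)}\nu+\tfrac sq}}.
\end{equation*}
For each fixed $i$ the numerator and denominator parameters have equal sums, so the classical identity $\prod_{j\ge0}\frac{(j+a)(j+b)}{(j+c)(j+d)}=\frac{\gG(c)\gG(d)}{\gG(a)\gG(b)}$ (valid when $a+b=c+d$; a consequence of the Pochhammer product and Stirling's formula) gives $\prod_{j\ge0}$ of the displayed factor equal to $\gG(\tfrac iq)\gG(\tfrac sq+\tfrac{r(i)}\nu)\big/\bigpar{\gG(\tfrac sq+\tfrac iq)\gG(\tfrac{r(i)}\nu)}$. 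Multiplying over $i=1,\dots,q$ identifies $P(s)$ with the product occurring in \eqref{tomas}, and combining with the prefactor $\gG(s+1)/\gG(s\ga+2)$ yields \eqref{tomas}.

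The remaining formulas \eqref{tomas2}--\eqref{tomas4} are Gamma-function identities derived from \eqref{tomas}. For \eqref{tomas2} I would apply Gauss's multiplication formula with modulus $q$ to $\prod_{i=1}^q\gG(\tfrac sq+\tfrac iq)$ and to $\prod_{i=1}^q\gG(\tfrac iq)$; the $(2\pi)^{(q-1)/2}$ factors cancel, leaving $\gG(s+1)\prod_{i=1}^q\gG(i/q)\big/\prod_{i=1}^q\gG(\tfrac sq+\tfrac iq)=q^{s}$, which converts \eqref{tomas} into \eqref{tomas2}. For \eqref{tomas3}, the multiplication formula with modulus $\nu$ applied to $\prod_{j=2}^{\nu+1}\gG(\tfrac sq+\tfrac j\nu)$ and $\prod_{j=2}^{\nu+1}\gG(\tfrac j\nu)$, using $\nu(\tfrac sq+\tfrac2\nu)=s\ga+2$, gives $\prod_{j=2}^{\nu+1}\gG(j/\nu)\big/\prod_{j=2}^{\nu+1}\gG(\tfrac sq+\tfrac j\nu)=\nu^{s\ga}/\gG(s\ga+2)$; inserting this into \eqref{tomas2} gives \eqref{tomas3}. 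Finally \eqref{tomas4} follows from \eqref{tomas3} by shifting the range of the last product from $\set{2,\dots,\nu+1}$ to $\set{1,\dots,\nu}$: by the functional equation $\gG(z+1)=z\gG(z)$ the ratio of these two products equals $\frac{\gG(1+1/\nu)\gG(\tfrac sq+\tfrac1\nu)}{\gG(1/\nu)\gG(\tfrac sq+1+\tfrac1\nu)}=\frac{q}{\nu s+q}$, producing the extra factor $q/(\nu s+q)$.

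I do not expect a genuine obstacle. The only points needing care are: justifying the regrouping of $P(s)$ into blocks of length $q$ and the convergence of the inner $j$-products (both handled by the absolute convergence of $P(s)$ together with the balance condition $a+b=c+d$), and the bookkeeping of the powers of $q$ and $\nu$ together with the cancellation of the $(2\pi)$-factors in the two uses of the multiplication formula.
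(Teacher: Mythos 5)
Your proposal is correct and follows essentially the same route as the paper: invoke Theorem \ref{TI1}/\eqref{ti1=}, split $k=jq+i$ using $r(jq+i)=r(i)+j\nu$, evaluate each inner $j$-product by the balanced Gamma-product identity (the paper does this via the limit of ratios $\gG(n+\cdot)$, which is the same computation), and then obtain \eqref{tomas2}--\eqref{tomas4} from the Gauss multiplication formula together with its $s=0$ case and $\gG(z+1)=z\gG(z)$. Your detailed verifications of the cancellations (the $(2\pi)$-powers, the factor $q/(\nu s+q)$) match what the paper leaves as "standard manipulations."
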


\begin{proof}
\refT{TI1} applies and shows \eqref{ti1}, and it remains only to compute the
infinite product there.
Standard manipulations yield, for every $s\ge0$,
writing $k=jq+i$ and noting that $r(jq+i)=j\nu+r(i)$,
\begin{align*}
& \prodk \frac{1+s/k}{1+s\ga /r(k)}
=
 \prodk \frac{(k+s)r(k)}{k(r(k)+s\ga)}
=\prodiq\prodoj  \frac{(jq+i+s)r(jq+i)}{(jq+i)\bigpar{r(jq+i)+s\ga}}
\\&
\hskip3em
\begin{aligned}
&
=\prodiq\prodoj  \frac{(jq+i+s)(j\nu+r(i))}{(jq+i)\bigpar{j\nu+r(i)+s\ga}}
=\prodiq\prodoj  \frac{\bigpar{j+\frac{i+s}q}\bigpar{j+\frac{r(i)}\nu}}
   {\bigpar{j+\frac{i}q}\bigpar{j+\frac{r(i)}\nu+\frac{s}q}}
\\&
=\prodiq\lim_{n\to\infty}%\prod_{j=0}^{n-1}  
\frac{\gG\bigpar{n+\frac{i+s}q}\gG\bigpar{n+\frac{r(i)}\nu}}
 {\gG\bigpar{\frac{i+s}q}\gG\bigpar{\frac{r(i)}\nu}}
\frac{\gG\bigpar{\frac{i}q}\gG\bigpar{\frac{r(i)}\nu+\frac{s}q}}
    {\gG\bigpar{n+\frac{i}q}\gG\bigpar{n+\frac{r(i)}\nu+\frac{s}q}}
\\&
=\prodiq \frac{\gG\bigpar{\frac{i}q}\gG\bigpar{\frac{r(i)}\nu+\frac{s}{q}}}
 {\gG\bigpar{\frac{i+s}q}\gG\bigpar{\frac{r(i)}\nu}}
.
\end{aligned}
\end{align*}
The result \eqref{tomas} follows.
Furthermore, by the Gauss multiplication formula for the Gamma function
\cite[(5.5.6)]{NIST},
\begin{align}\label{tho1}
\gG(s+1) &= (2\pi)^{(1-q)/2} q^{s+1/2} 
  \prod_{k=0}^{q-1}  \gG\bigpar{\frac{s}{q}+\frac{1}q+\frac{k}{q}},
  \\\label{tho2}
   \gG(s\ga+2)& = (2\pi)^{(1-\nu)/2} \nu^{s\ga+3/2} \prod_{k=0}^{\nu-1}
   \gG\Bigpar{\frac{s}{q}+\frac{2}{\nu}+\frac{k}{\nu}},
\end{align}
which together with the special case $s=0$ in \eqref{tho1} and \eqref{tho2}
easily yield \eqref{tomas2}--\eqref{tomas4}.
\end{proof}

See \refEs{Emedian}, \ref{Epercentile} and \ref{Ekapad}.

\begin{example}
  Taking $s=q$ in \eqref{tomas2}, we see that the $q$-th moment has the
  rational value 
  \begin{align}
\E W^q=
\frac{q^q}{\gG(\nu+2)} 
\prodiq \frac{r(i)}\nu
    =
\frac{q^q} {\nu^q(\nu+1)!}\prodiq r(i).
  \end{align}
\end{example}

\begin{remark}
  The result extends
to all complex $s$ such
  that $\Re s>-q/\nu$ (and more generally to $\Re s>-\rx/\ga$); 
in particular to imaginary $s$, which gives the
  \chf{} of $\log W$, see \refR{RZ1} and \refT{TM-}.
\end{remark}

\begin{remark}
  \label{R244}
Positive random variables with moments that can be expressed as a fraction
of products of Gamma functions as in \eqref{tomas} are studied in \eg{}
\cite{SJ244}. In particular,
\cite[Theorems 5.4 and  6.1]{SJ244} imply
that $W$ has an infinitely differentiable density dunction $f_W(x)$ on $\ooo$,
with the asymptotic 
\begin{equation}
  \label{eq:fW}
  f_W(x)\sim C_2  x^{c_1-1} e^{-c_2 x^{1/(1-\ga)}},
\end{equation}
where the positive constants $C_2, c_1, c_2$ can be expressed explicitly in
$\nu$, $q$ and $r(1),\dots,r(q)$.
This density $f_W$ is of a type known as $H$-function, see
\cite[Addendum]{SJ244}. 
\end{remark}

\section{Small $r(m)$}\label{Ssmall}

In this section we develop the results in the case of small $r(m)$,
i.e., $\sum_m r(m)\qww=\infty$.
However, we begin with some results holding in general, although their main
interest is for the case of small $r(m)$.

Let, recalling \eqref{eq:ly},
\begin{equation}\label{gss}
  \gss_m:=\Var Y_m = \sumkm \frac{\gd_k}{r(k)^2}
\end{equation}
and, as a simpler approximation,
\begin{equation}\label{hgss}
  \hgss_m:= \sumkm \frac{1}{r(k)^2}.
\end{equation}
We have, by the argument in \eqref{eq:yk},
\begin{equation}\label{gss2}
  \gss_m:=\hgss_m-\sum_{\ell=2}^{r(m)} \frac{1}{\ell^2}
=\hgss_m-O(1).
\end{equation}
Hence, the condition that $r(m)$ is small can be written in three equivalent
forms: 
\begin{equation}\label{gssoo}
\sum_m r(m)\qww=\infty 
\iff
  \hgss_m\to\infty
\iff
\gss_m\to\infty
\end{equation}
Furthermore, when this holds, then $\gs_m\sim \hgs_m$.

For convenience, define 
\begin{align}
  A_m&:=\sumkom e^{Y_k}E_k',  \label{A}
\\
B_m&:=\sumkom e^{Y_k}. \label{B}
\end{align}
Hence, \refT{TN2} says that \as{} $N_m\sim A_{m-1}$ as \ntoo.

\begin{lemma}
  \label{LC1}
For any sequence $r(m)$,
  \begin{equation}
    B_m = r(m)e^{Y_m+\Op(1)}, \label{lc1}
\qquad m\ge1
,
  \end{equation}
i.e., 
\begin{equation}
  \log B_m -\log r(m) - Y_m = \Op(1).
\end{equation}
\end{lemma}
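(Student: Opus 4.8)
The plan is to study the ratio $R_m:=B_m/\bigpar{r(m)e^{Y_m}}$, so that \eqref{lc1} becomes precisely the assertion $\log R_m=\Op(1)$; since $R_m>0$, this is equivalent to showing that $R_m$ and $1/R_m$ are both tight. Using \refL{LY} and \eqref{eq:ly},
\begin{equation*}
  R_m=\frac{1}{r(m)}\sum_{i=0}^{m}e^{-(Y_m-Y_{m-i})},
  \qquad
  Y_m-Y_{m-i}=\sum_{j=m-i+1}^{m}\frac{\gd_j}{r(j)}E_j .
\end{equation*}
Two elementary facts about the sequence $r$ will be used repeatedly: $r(j)\le r(m)$ for $j\le m$ by monotonicity, and, since $1-\gd_j=r(j)-r(j-1)$ telescopes, the number $p_i:=\#\{m-i<j\le m:\gd_j=1\}$ of nonzero increments in the sum above equals $i-\bigpar{r(m)-r(m-i)}$, hence $p_i\ge(i-r(m)+1)^+$.

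For the upper bound $R_m=\Op(1)$ I would bound $Y_m-Y_{m-i}$ from below by replacing every $r(j)$ in the denominators by the larger value $r(m)$; this makes $Y_m-Y_{m-i}\ge r(m)\qw$ times a sum of $p_i$ independent $\Exp(1)$ variables, so that $\E e^{-(Y_m-Y_{m-i})}\le\bigpar{r(m)/(r(m)+1)}^{p_i}$. Inserting $p_i\ge(i-r(m)+1)^+$ and summing the resulting geometric series gives $\sum_{i=0}^{m}\bigpar{r(m)/(r(m)+1)}^{p_i}\le 2r(m)$, hence $\E R_m\le 2$, and therefore $R_m=\Op(1)$ by Markov's inequality. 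This is the step I expect to carry the weight: $\E e^{Y_m}$ may well be infinite (already for the records rule $r\equiv1$, where $Y_m$ is a sum of $m$ independent $\Exp(1)$'s), so $B_m$ and $e^{Y_m}$ cannot be estimated separately; it is exactly the uniform replacement of $r(j)$ by $r(m)$ that collapses the estimate of $\E[B_m e^{-Y_m}]$ to a single geometric ratio.

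For the lower bound I would keep only the top $t+1$ terms of the sum, where $t:=\floor{r(m)/2}$. Since $r$ grows by at most $1$ per step, $r(j)\ge r(m)-t+1>r(m)/2$ whenever $m-t<j\le m$, so for every $k$ with $m-t\le k\le m$,
\begin{equation*}
  Y_m-Y_k\le\frac{1}{r(m-t+1)}\sum_{j=m-t+1}^{m}E_j\le\frac{2}{r(m)}\,G,
  \qquad G:=\sum_{j=m-t+1}^{m}E_j\xsim\gG(t),
\end{equation*}
and hence $R_m\ge\tfrac{t+1}{r(m)}\,e^{-2G/r(m)}>\tfrac12 e^{-2G/r(m)}$. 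Since $\E G=t\le r(m)/2$, Markov's inequality gives, for $0<\eps<\tfrac12$ and uniformly in $m\ge1$,
\[
  \P(R_m\le\eps)\le\P\Bigpar{G>\tfrac12 r(m)\log\tfrac1{2\eps}}\le\frac{1}{\log(1/(2\eps))},
\]
which tends to $0$ as $\eps\to0$; so $1/R_m=\Op(1)$. Combining the two bounds yields $\log R_m=\Op(1)$, i.e.\ \eqref{lc1}. The only remaining points are routine bookkeeping: $0\le m-t$ because $r(m)\le m+1$, and the degenerate case $r(m)=1$ (then $t=0$, $G=0$, $R_m\ge1$) is harmless.
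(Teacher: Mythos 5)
Your proof is correct and follows essentially the same route as the paper: the upper bound is the first-moment estimate $\E\bigpar{B_m e^{-Y_m}}\le C\,r(m)$ obtained by bounding $\E e^{-(Y_m-Y_{m-i})}$ termwise (replacing $r(j)$ by $r(m)$ and counting the nonzero increments, then summing a geometric series), and the lower bound restricts to the last $\approx r(m)/2$ summands and controls $Y_m-Y_{m-t}$, whose mean is $O(1)$, by Markov's inequality. Only the bookkeeping differs slightly (the paper bounds $\E(Y_m-Y_{m_1})\le1$ directly rather than via a $\gG(t)$ variable), so there is nothing to add.
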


\begin{proof}
  Let $m\ge1$ and let $m_1:=m-\ceil{r(m)/2}\ge0$.
Then, by \eqref{B},
\begin{align}\label{skrubb}
B_m e^{-Y_m} \ge \sum_{k=m_1}^m e^{Y_k-Y_m} 
\ge (m-m_1) e^{Y_{m_1}-Y_m } 
\ge \frac{r(m)}2 e^{Y_{m_1}-Y_m }. 
\end{align}

If $i<m-m_1=\ceil{r(m)/2}$, then $i\le \floor{r(m)/2}$ and thus
\begin{equation}\label{kank}
r(m-i)\ge r(m)-i \ge r(m)-\floor{r(m)/2}=\ceil{r(m)/2}.  
\end{equation}
Hence,
\begin{align}
\E |Y_m-Y_{m_1} | 
&=
\E (Y_m-Y_{m_1})
= y_m - y_{m_1}
=
\sum_{k=m_1+1}^m\frac{\gd_k}{r(k)}
\notag\\&
=\sum_{i=0}^{m-m_1-1}\frac{\gd_{m-i}}{r(m-i)}
%\notag\\&
\le (m-m_1)\cdot\frac{1}{\ceil{r(m)/2}}
=1.
\end{align}
Thus
$Y_m-Y_{m_1}=\Op(1)$, which by \eqref{skrubb} yields the lower bound
\begin{equation}\label{BYlower}
  B_me^{-Y_m}\ge r(m) e^{-(Y_m-Y_{m_1})-\log 2} = r(m) e^{\Op(1)}.
\end{equation}

In the other direction, we note that $\gd_k=0$ for exactly $r(m)-1$ values
of $k\le m$.
Hence, if $r(m)\le j\le m$, then $\gd_k=1$ for more that $j-r(m)$ values of
$k\in[m-j+1,m]$. For each such $k$,
\begin{equation}
\E e^{-E_k/r(k)}=(1+1/r(k))\qw\le (1+1/r(m))\qw.
\end{equation}
Consequently, by \eqref{eq:ly},
\begin{equation}
  \E e^{Y_{m-j}-Y_m}
= \prod_{\substack{m-j<k\le m\\\gd_k=1}} \E e^{-E_k/r(k)}
\le \Bigparfrac{1}{1+1/r(m)}^{j-r(m)}
.%= \bigpar{1+1/r(m)}^{r(m)-j}
\end{equation}
In other words, if $r(m)\le j\le m$, then 
\begin{equation}
  \E e^{Y_{m-j}-Y_m}
\le  \bigpar{1+1/r(m)}^{r(m)-j}
\le e \bigpar{1+1/r(m)}^{-j},
\end{equation}
and the same estimate holds trivially if $0\le j<r(m)$ too.
Consequently,
\begin{equation}
  \E \bigpar{B_me^{-Y_m}} 
%\le \sum_{j=0}^m e \bigpar{1+1/r(m)}^{-j}
\le \sum_{j=0}^\infty e \bigpar{1+1/r(m)}^{-j}
= e(r(m)+1)
\le 2e r(m)
\end{equation}
and thus, noting that $\log_+\Op(1)=\Op(1)$,
\begin{equation}\label{BYupper}
 B_me^{-Y_m} = r(m) \Op(1) \le r(m) e^{\Op(1)}. 
\end{equation}
 The result follows by combining \eqref{BYlower} and \eqref{BYupper}.
\end{proof}

\begin{lemma}\label{LC2}
For any sequence $r(m)$,
\begin{equation}
  \label{lc2}
  A_m=B_m e^{\Op(1)} = r(m) e^{Y_m+\Op(1)}.
\end{equation}
%If $r(m)\to\infty$ as \mtoo, then \XXX
\end{lemma}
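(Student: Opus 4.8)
The plan is to prove the first equality $A_m=B_me^{\Op(1)}$, i.e.\ $\log A_m-\log B_m=\Op(1)$, by two one-sided bounds; the second equality $B_me^{\Op(1)}=r(m)e^{Y_m+\Op(1)}$ is then immediate from \refL{LC1}.

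For the upper bound I would condition on $(Y_k)\xoo$. By \refT{TN2} the $E'_k$ are then \iid{} $\Exp(1)$, so $\E\bigpar{A_m\mid(Y_k)\xoo}=\sumkom e^{Y_k}=B_m$, and Markov's inequality gives $\P(A_m>KB_m)\le1/K$ uniformly in $m$; hence $A_m/B_m=\Op(1)$, i.e.\ $\log_+(A_m/B_m)=\Op(1)$.

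For the (harder) lower bound I would keep only the top block of terms. With $m_1:=m-\ceil{r(m)/2}$ as in the proof of \refL{LC1}, monotonicity of $k\mapsto Y_k$ and nonnegativity of the summands give $A_m\ge e^{Y_{m_1}}G_m$, where $G_m:=\sum_{k=m_1}^m E'_k$. Since $m_1,\dots,m$ are deterministic, $G_m\xsim\gG(n)$ with $n:=m-m_1+1\ge r(m)/2$, and $G_m$ is independent of $(Y_k)\xoo$; a Chernoff bound $\P(G_m<n/K)\le(e/K)^n\le e/K$ (for $K\ge3$, $n\ge1$) then gives $r(m)/G_m=\Op(1)$, i.e.\ $G_m\ge r(m)e^{-\Op(1)}$. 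Using this, together with $Y_m-Y_{m_1}=\Op(1)$ — which follows from $\E\abs{Y_m-Y_{m_1}}=y_m-y_{m_1}\le1$, established in the proof of \refL{LC1} — and with \refL{LC1} in the form $\log B_m=\log r(m)+Y_m+\Op(1)$, one obtains $\log A_m\ge Y_{m_1}+\log G_m\ge\log B_m-\Op(1)$, i.e.\ $A_m\ge B_me^{-\Op(1)}$.

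Combining the two bounds yields $\log A_m-\log B_m=\Op(1)$, and then $A_m=r(m)e^{Y_m+\Op(1)}$ follows from \refL{LC1}. The main obstacle is the lower bound, and the point to watch there is uniformity: the Gamma-tail estimate must hold uniformly in the shape parameter $n\ge1$ (one exponential-moment bound suffices), and the several $\Op(1)$ error terms — from $G_m$, from $Y_m-Y_{m_1}$, and from \refL{LC1} — must be combined additively in the exponent, using that a finite sum of tight sequences is tight.
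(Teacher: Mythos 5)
Your proposal is correct, but it takes a somewhat different route from the paper. The paper's proof is a conditional second-moment computation: conditioning on $(Y_k)\xoo$ it bounds $\E\bigl((A_m/B_m-1)^2\mid\YYY\bigr)\le B_m\qw e^{Y_m}$, then feeds in \eqref{skrubb} and the estimate $\E e^{Y_m-Y_{m_1}}\le 4$, and splits into two cases: when $r(m)\to\infty$ this gives the stronger statement $A_m/B_m\pto1$ (hence $A_m=B_me^{\op(1)}$), while for bounded $r(m)$ it only extracts $A_m/B_m=\Op(1)$ and supplements this with the one-term lower bound $A_m\ge e^{Y_m}E'_m=r(m)e^{Y_m+\Op(1)}$. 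You instead prove the two one-sided bounds directly and uniformly, with no case distinction: the upper bound by the conditional first moment $\E(A_m\mid\YYY)=B_m$ and Markov, and the lower bound by keeping the block $[m_1,m]$ (the same $m_1=m-\ceil{r(m)/2}$ as in \refL{LC1}), so that $A_m\ge e^{Y_{m_1}}G_m$ with $G_m\xsim\gG(n)$, $n\ge r(m)/2$, together with a uniform Gamma lower-tail (Chernoff) estimate, the bound $\E(Y_m-Y_{m_1})=y_m-y_{m_1}\le1$ from the proof of \refL{LC1}, and \refL{LC1} itself; all the pieces you invoke (independence of $(E'_k)$ from $(Y_k)\xoo$ via \refT{TN2}, monotonicity of $Y_k$, uniformity of the tail bound in the shape parameter, additivity of tight error terms) check out. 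What each approach buys: the paper's variance argument yields the sharper intermediate fact $A_m/B_m\pto1$ when $r(m)\to\infty$ (not needed for the lemma), at the price of the bounded/unbounded dichotomy; your argument is more elementary (first moments and a one-line Chernoff bound) and treats all sequences $r(m)$ uniformly, delivering exactly the $\Op(1)$ statement claimed.
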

\begin{proof}
  First condition on the entire sequence $\YYY$. Then, by \eqref{A}--\eqref{B},
  \begin{align}
\E\Bigpar{(A_m-B_m)^2\mid\YYY}
&=\E\Bigpar{\Bigpar{\sumkom e^{Y_k}(E_k'-1)}^2\Bigm|\YYY} 
\notag\\&
=\sumkom e^{2Y_k}
\le \sumkom e^{Y_k+Y_m}
=B_m e^{Y_m}
  \end{align}
and thus
  \begin{align}
\E\Bigpar{\Bigpar{\frac{A_m}{B_m}-1}^2\Bigm|\YYY} 
%\notag\\&
\le 
B_m\qw e^{Y_m}.
\label{gulla}
  \end{align}
Using \eqref{skrubb}, with $m_1:=m-\ceil{r(m)/2}$ as above,
and taking the expectation, we obtain
  \begin{align}
\E\Bigpar{\frac{A_m}{B_m}-1}^2
\le 
\E\bigpar{B_m\qw e^{Y_m}}
\le \frac{2}{r(m)}\E e^{Y_m-Y_{m_1}}.
\label{kalla}
  \end{align}
Furthermore, by \eqref{eq:ly} and \eqref{kank}, if $r(m)\ge3$, 
\begin{align}
  \E e^{Y_m-Y_{m_1}} 
&\le \prod_{k=m_1+1}^m \E e^{E'_k/r(k)}
\le 
\Bigpar{1-\frac{1}{r(m_1+1)}}^{-(m-m_1)}
\notag\\&
\le 
\Bigpar{1-\frac{1}{\ceil{r(m)/2}}}^{-\ceil{r(m)/2}}
\le 4.
\label{kalle}
\end{align}
Consequently, if $r(m)\to\infty$ as \mtoo, then
\eqref{kalla} and \eqref{kalle} imply $\E\bigpar{A_m/B_m-1}^2\to0$, and thus
$A_m/B_m\pto1$, which is the same as $\log(A_m/B_m)=\op(1)$, or
$A_m=B_m e^{\op(1)}$. In particular, $A_m=B_me^{\Op(1)}$, and \eqref{lc2}
follows in this case by \refL{LC1}.

On the other hand, if $\sup_m r(m)<\infty$, we note that the \rhs{} of
\eqref{gulla} is $\le1$, and thus taking the expectation yields
  \begin{align}
\E\Bigpar{\frac{A_m}{B_m}-1}^2
\le 
\E\bigpar{B_m\qw e^{Y_m}}
\le 1,
\label{kulla}
  \end{align}
which implies that $A_m/B_m=\Op(1)$ and thus, using \eqref{lc1},
\begin{equation}\label{pik}
A_m\le B_me^{\Op(1)} = r(m)e^{Y_m+\Op(1)}.  
\end{equation}
Furthermore, assuming $r(m)=O(1)$ and thus $\log r(m)=O(1)$,
\begin{equation}\label{staaf}
  A_m\ge e^{Y_m} E'_m = e^{Y_m + \Op(1)}= r(m)e^{Y_m + \Op(1)}.
\end{equation}
The result in this case (bounded $r(m)$)
follows from \eqref{pik} and \eqref{staaf}, together with \eqref{lc1} again.
\end{proof}

\begin{theorem}\label{TNO}
For any sequence $r(m)$,
\begin{equation}\label{tno}
  N_m = r(m) e^{Y_m+\Op(1)}.
\end{equation}
\end{theorem}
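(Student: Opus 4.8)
The plan is to obtain \eqref{tno} by combining \refT{TN2}, which expresses $N_m$ asymptotically through the sum $A_{m-1}$ of \eqref{A}, with \refL{LC2}, which already evaluates that sum; a short argument then bridges the index $m-1$ appearing in \refL{LC2} to $m$.

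First I would invoke \refT{TN2}: \as, $N_m\sim A_{m-1}=\sumkomi e^{Y_k}E_k'$ as \mtoo. Since $N_m/A_{m-1}\asto1$, the sequence $\log N_m-\log A_{m-1}$ tends to $0$ \as, hence $\pto0$, so it is $\op(1)$ and in particular $\Op(1)$; thus $N_m=A_{m-1}e^{\Op(1)}$. Next, \refL{LC2} gives $A_{m-1}=r(m-1)e^{Y_{m-1}+\Op(1)}$, so $\log N_m=\log r(m-1)+Y_{m-1}+\Op(1)$.

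It then remains to check that replacing $r(m-1)$ by $r(m)$ and $Y_{m-1}$ by $Y_m$ costs only a further $\Op(1)$. For the rank: \eqref{eq:rr} gives $r(m-1)\le r(m)\le r(m-1)+1$ and $r(m-1)\ge1$, whence $0\le\log r(m)-\log r(m-1)\le\log2$, a deterministic $O(1)$. For the threshold: by \refL{L2}, $Y_m-Y_{m-1}$ is either $0$ or $\Exp\bigpar{r(m-1)}$; in both cases $Y_m-Y_{m-1}\ge0$ and $\P\bigpar{Y_m-Y_{m-1}>K}\le e^{-r(m-1)K}\le e^{-K}$ uniformly in $m$, so the sequence $Y_m-Y_{m-1}$ is tight, i.e.\ $\Op(1)$. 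Adding these contributions,
\begin{equation*}
  \log N_m=\log r(m-1)+Y_{m-1}+\Op(1)=\log r(m)+Y_m+\Op(1),
\end{equation*}
which is \eqref{tno}.

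I do not expect any step to present a genuine obstacle; the theorem is essentially a repackaging of \refT{TN2} and \refL{LC2}. The one point needing a little care is that the two bridging corrections must be shown tight \emph{uniformly} in $m$ (not merely finite for each fixed $m$), and this is exactly where the hypotheses $r(m)\ge1$ and $r(m)\le r(m-1)+1$ from \eqref{eq:rr} are used.
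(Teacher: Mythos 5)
Your proposal is correct and follows essentially the same route as the paper: Theorem~\ref{TN2} gives $\log N_m=\log A_{m-1}+\op(1)$, Lemma~\ref{LC2} turns this into $\log r(m-1)+Y_{m-1}+\Op(1)$, and the shift from $m-1$ to $m$ costs only $O(1)$ for the rank and $\Op(1)$ for the threshold (the paper writes $Y_m-Y_{m-1}=\gd_m E_m'/r(m)=\Op(1)$, which is the same uniform-tightness fact you verify via the exponential tail bound).
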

\begin{proof}
  \refT{TN2} yields $N_m/A_{m-1}\asto1$, and thus
$N_m/A_{m-1}\pto1$, which is the same as $\log(N_m/A_{m-1})=\op(1)$.
Hence, \refL{LC2} yields
\begin{align}\label{tnq}
  \log N_m = \log A_{m-1} + \op(1) = \log r(m-1) + Y_{m-1} + \Op(1).
\end{align}
Furthermore, $\log r(m-1)=\log r(m)+O(1)$ and 
$Y_m-Y_{m-1}=\frac{\gd_m}{ r(m)} E_m'=\Op(1)$. Hence, \eqref{tno} follows from
\eqref{tnq}. 
\end{proof}

\refT{TNO} holds for any sequence $r(m)$, but if $r(m)$ is large, then
\refT{TN4} gives a sharper result.
In the remainder of this secion we assume that 
$r(m)$ is small.

\begin{lemma}
  \label{LD}
Suppose that $\sum_m r(m)\qww=\infty$.
Then, as \mtoo,
\begin{equation}\label{ld}
  \frac{Y_m-y_m}{\gs_m}\dto N(0,1).
\end{equation}
\end{lemma}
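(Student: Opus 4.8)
The plan is to read off $Y_m-y_m$ from \refL{LY} as a sum of independent, centred summands whose variances sum to $\gss_m$, and then to invoke a classical central limit theorem for triangular arrays (Lyapunov's form), the only point needing checking being the negligibility of individual terms.

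First I would recall that, by \refL{LY} together with \eqref{eq:yk},
\[
 Y_m-y_m=\sum_{k=1}^m\xi_k,\qquad \xi_k:=\frac{\gd_k}{r(k)}\,(E_k-1),
\]
where the $\xi_k$ are independent with $\E\xi_k=0$ and, since $\gd_k\in\setoi$ and $\Var E_k=1$, with $\Var\xi_k=\gd_k/r(k)^2$; hence $\sum_{k=1}^m\Var\xi_k=\gss_m$. By \eqref{gssoo} the hypothesis $\sum_m r(m)\qww=\infty$ is exactly the statement $\gss_m\to\infty$, so the variance normalising the sum diverges, as a CLT requires.

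Next I would verify the Lyapunov condition with third absolute moments. Put $C:=\E|E-1|^3<\infty$ where $E\xsim\Exp(1)$. Since $r(k)\ge1$ we have $r(k)^{-3}\le r(k)^{-2}$, so
\[
 \sum_{k=1}^m\E|\xi_k|^3=C\sum_{k=1}^m\frac{\gd_k}{r(k)^3}\le C\sum_{k=1}^m\frac{\gd_k}{r(k)^2}=C\gss_m,
\]
and therefore $\gs_m^{-3}\sum_{k=1}^m\E|\xi_k|^3\le C/\gs_m\to0$ as \mtoo. By the Lyapunov central limit theorem for sums of independent random variables (see e.g.\ \cite{FellerII}) it follows that $(Y_m-y_m)/\gs_m\dto N(0,1)$, which is \eqref{ld}.

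I do not expect a real obstacle here: the whole content is that the individual summands are asymptotically negligible, and the trivial bound $r(k)^{-3}\le r(k)^{-2}$ forces the third moments to be dominated by the variances, so Lyapunov's condition is automatic as soon as $\gss_m\to\infty$. A couple of harmless bookkeeping remarks: the terms with $\gd_k=0$ vanish, so one may instead index the sums over $\set{k\le m:\gd_k=1}$; and by \eqref{gss2} one could replace $\gs_m$ by $\hgs_m$ throughout at the cost of an asymptotically negligible factor. Neither is needed for the statement.
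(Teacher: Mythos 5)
Your proposal is correct and is essentially the paper's own argument: the same decomposition $Y_m-y_m=\sum_{k\le m}\frac{\gd_k}{r(k)}(E_k-1)$ from \eqref{Yy}, the same observation that $r(k)\ge1$ gives $\sum_{k\le m}\E|\xi_k|^3\le C\gss_m=o(\gs_m^3)$, and the same appeal to the Lyapunov central limit theorem. No gaps.
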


\begin{proof}
An immediate consequence of \eqref{Yy} and the central limit theorem with
Lyapounov's condition \cite[Theorem 7.2.2]{Gut},   
using the estimate
\begin{equation}
  \sumkm \E \Bigabs{\frac{\gd_k}{r(k)}(E_k-1)}^3
%=   \sumkm \frac{\gd_k}{r(k)^3}\E|E_k-1|^3
=
 C \sumkm \frac{\gd_k}{r(k)^3}
\le C \gs_m^2
=o(\gs_m^3)
.
\end{equation}
\end{proof}

\begin{theorem}\label{TNsmall}
Suppose that $\sum_m r(m)\qww=\infty$.  
Then
\begin{equation}
  \label{tnsmall}
  \frac{\log N_m -\bigpar{y_m+\log r(m)}}{\hgs_m} \dto N(0,1).
\end{equation}
\end{theorem}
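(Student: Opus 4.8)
The plan is to obtain \eqref{tnsmall} by assembling three ingredients already available: the additive form of \refT{TNO}, the central limit theorem for $Y_m$ in \refL{LD}, and the elementary comparison $\gs_m\sim\hgs_m$ recorded around \eqref{gss2}--\eqref{gssoo}; the glue is Slutsky's theorem.

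First I would rewrite \refT{TNO}. Taking logarithms in \eqref{tno} gives $\log N_m=\log r(m)+Y_m+\Op(1)$, hence
\begin{equation*}
\log N_m-\bigpar{y_m+\log r(m)}=(Y_m-y_m)+\Op(1).
\end{equation*}
Dividing by $\hgs_m$ then yields
\begin{equation*}
\frac{\log N_m-\bigpar{y_m+\log r(m)}}{\hgs_m}
=\frac{Y_m-y_m}{\gs_m}\cdot\frac{\gs_m}{\hgs_m}+\frac{\Op(1)}{\hgs_m}.
\end{equation*}
Since we are in the case $\sum_m r(m)\qww=\infty$, \eqref{gssoo} gives $\hgs_m\to\infty$, so the last term is $\op(1)$, while \eqref{gss2} gives $\gs_m/\hgs_m\to1$.

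It then remains to apply \refL{LD}, whose hypothesis is precisely $\sum_m r(m)\qww=\infty$, to get $(Y_m-y_m)/\gs_m\dto N(0,1)$. By Slutsky's theorem the product $\frac{Y_m-y_m}{\gs_m}\cdot\frac{\gs_m}{\hgs_m}$ converges in distribution to $N(0,1)$, and adding the $\op(1)$ term does not change the limit; this is \eqref{tnsmall}.

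There is no real obstacle here: the whole argument is a direct combination of earlier results. The only point deserving a word of care is that the error term coming from \refT{TNO} is genuinely $\Op(1)$ (a tight sequence), not merely $\op(\hgs_m)$, so that division by the diverging normalization $\hgs_m$ sends it to $0$ in probability; this is exactly what \refT{TNO} provides, so the passage is harmless.
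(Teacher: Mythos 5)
Your proposal is correct and follows essentially the same route as the paper: take logarithms in Theorem~\ref{TNO}, note that the $\Op(1)$ error is $\op(\hgs_m)$ because $\hgs_m\to\infty$ by \eqref{gssoo}, use $\gs_m/\hgs_m\to1$ from \eqref{gss2}, and conclude with Lemma~\ref{LD} and the Cram\'er--Slutsky theorem. No gaps; nothing further to add.
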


\begin{proof} 
Since $\hgs_m\to\infty$ 
by the assumption and \eqref{gssoo},  
\refT{TNO} yields
  \begin{align*}
    \log N_m - \bigpar{y_m+\log r(m)} = Y_m + \Op(1)-y_m
= Y_m-y_m+\op(\hgs_m)
  \end{align*}
and thus
  \begin{align}
\frac{\log N_m - \bigpar{y_m+\log r(m)}}{\hgs_m} 
= \frac{\gs_m}{\hgs_m}\cdot\frac{Y_m-y_m}{\gs_m}+\op(1).
  \end{align}
Since \eqref{gss2} implies $\gs_m/\hgs_m\to1$,  
the result follows from \refL{LD} and the standard Cram\'er--Slutsky theorem
\cite[Theorem 5.11.4]{Gut}.
\end{proof}

The asymptotic distribution of $N_m$ is thus log-normal, for any small
sequence $r(m)$.
Under weak regularity assumptions, this can be inverted to yield asymptotic
normality of $M_n$.
For convenience, we will assume that the sequence $r(m)$ is regularly
varying, see \eg{} \cite[\S1.9]{RegVar}.
We define, 
\begin{align}\label{hy}
  \hy_m:=\sumkm \frac{1}{r(k)},
\end{align}
and see by  \eqref{eq:yk} that
\begin{align}\label{hyy}
  \hy_m-y_m = \sum_{\ell=2}^{r(m)}\frac{1}{\ell} = \log r(m)+O(1).
\end{align}

\begin{lemma}\label{LR}
Suppose that $\sum_m r(m)\qww=\infty$, and that $r(m)$ is regularly varying.
\begin{romenumerate}
\item \label{LRgb}  
Let $\gb(m):=r(m)\hgs_m$ as in \eqref{ti2gb}.
Then, for all $m\ge1$,
\begin{align}
\gb(m)&\ge  m\qq, \label{lrgb-}
\\
\gb(m)&\le C m^{0.51}=o(m).    \label{lrgb+}
\end{align}

\item \label{LRxm}
Suppose further that $m=m_n$ and $\xm=m_n$ are two sequences such that, as
\ntoo,
$\xm\sim m$. 
Then, 
\begin{align}
  r(\xm)&\sim r(m), \label{lrr}
\\
\hgs_{\xm}&\sim\hgs_m.\label{lrgs}
\end{align}
\end{romenumerate}
\end{lemma}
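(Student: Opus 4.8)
The plan is to reduce everything to standard facts about regularly varying sequences---the uniform convergence theorem, Karamata's theorem on partial sums, and the Potter bounds---for which I would cite \cite[\S1.9]{RegVar}; a regularly varying \emph{sequence} $(r(m))$ is conveniently handled by passing to the regularly varying \emph{function} $x\mapsto r(\floor x)$. I would first dispose of the lower bound \eqref{lrgb-}, which needs no regularity: from \eqref{ti2gb}, $\gb(m)^2=\sum_{k=1}^m\bigpar{r(m)/r(k)}^2$, and since \eqref{eq:rr} makes $r$ nondecreasing, $r(k)\le r(m)$ for $1\le k\le m$, so each term is $\ge1$ and hence $\gb(m)^2\ge m$.

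For the remaining claims I would let $b$ denote the index of $r$. The bound $r(m)\le m+1$ forces $b\le1$, and $\sum_m r(m)\qww=\infty$ forces $b\le\tfrac12$, since $b>\tfrac12$ would make $(r(m)\qww)$ regularly varying of index $-2b<-1$ and the series convergent. Applying Karamata's theorem to the partial sums of the regularly varying sequence $(r(k)\qww)$, of index $-2b\ge-1$, I get that $\hgss_m=\sum_{k\le m}r(k)\qww$ is regularly varying: of index $1-2b\in(0,1]$ when $b<\tfrac12$, and slowly varying in the boundary case $b=\tfrac12$ (where the hypothesis $\sum_m r(m)\qww=\infty$ is needed again, to ensure $\hgss_m\to\infty$). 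Consequently $\gb(m)^2=r(m)^2\hgss_m$, a product of regularly varying sequences, is regularly varying of index $2b+(1-2b)=1$.

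With this in hand, \eqref{lrgb+} would follow from the Potter bounds: $\gb(m)^2/m$ is slowly varying, hence $\le m^{0.02}$ for large $m$, so $\gb(m)^2\le Cm^{1.02}$ and $\gb(m)\le Cm^{0.51}$ for all $m\ge1$ after enlarging $C$; and an index-$1$ regularly varying sequence is $o(m^2)$, giving $\gb(m)=o(m)$. For part~\ref{LRxm} I would simply invoke the uniform convergence theorem: since $\xm/m\to1$ and $r$, $\hgss$ are regularly varying of indices $b$ and $1-2b$, we get $r(\xm)/r(m)\to1$ and $\hgss_{\xm}/\hgss_m\to1$, which are \eqref{lrr} and \eqref{lrgs}.

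The one point I expect to need care with is the boundary index $b=\tfrac12$: there Karamata's theorem yields no asymptotic equivalent for $\hgss_m$, only that it is slowly varying---but that is precisely what is needed, through the Potter bounds for \eqref{lrgb+} and through uniform convergence for \eqref{lrgs}. Beyond this, the proof is a direct appeal to the regular-variation toolbox, together with the elementary monotonicity bound used for \eqref{lrgb-}.
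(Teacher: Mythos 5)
Your proof is correct and uses essentially the same regular-variation toolbox as the paper: the lower bound \eqref{lrgb-} from monotonicity of $r$, exclusion of index $>\tfrac12$ from divergence of $\sum_m r(m)\qww$, Potter-type growth bounds for \eqref{lrgb+}, and the uniform convergence theorem together with Karamata's theorem (the results cited from \cite{RegVar}) for part \ref{LRxm}, including the boundary index $\tfrac12$. The only cosmetic difference is that the paper proves \eqref{lrgb+} by bounding $r(m)/r(k)\le C(m/k)^{0.51}$ directly inside the sum defining $\gb(m)^2$, whereas you first establish regular variation of $\hgss_m$ and hence of $\gb(m)^2$ and then apply the growth bound; both routes are standard and equivalent in substance.
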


\begin{proof}
Suppose that $r(m)$ is regularly varying of index $\kk$.
Then $r(m)=m^{\kk+o(1)}$, as a consequence of 
\cite[Theorem 1.9.7 or Theorem 1.5.4]{RegVar}.
Hence, the assumption $\sum_m r(m)\qww=\infty$ implies $\kk\le0.5$. 
Hence, $r(m)=O\bigpar{ m^{0.5+o(1)}}$, and in particular, 
\begin{align}\label{0.51}
r(m)=O\bigpar{ m^{0.51}} = o(m).  
\end{align}

Since $\kk<0.51$, we have $r(m)/r(k)\le C (m/k)^{0.51}$
whenever $1\le k\le m$;
this is another consequence of \cite[Theorem 1.9.7, or Theorem 1.5.6]{RegVar}.
Hence,
\begin{align}
  \gb(m)^2=\sumkm\frac{r(m)^2}{r(k)^2}
\le \sumkm C \frac{m^{1.02}}{k^{1.02}} \le C m^{1.02},
\end{align}
which yields \eqref{lrgb+}.
%and thus $\gb(m)=O\bigpar{m^{0.51}}=o(m)$.
In the opposite direction, trivially
\begin{align}
  \gb(m)^2=\sumkm\frac{r(m)^2}{r(k)^2}
\ge \sumkm 1=m.
\end{align}

\pfitemref{LRxm}
Since $r(m)$ is regularly varying, 
if $\xm\sim m$ then $r(\xm)\sim r(m)$, as a consequence of 
\cite[Theorem 1.9.7 or Theorem 1.5.2]{RegVar}.

Furthermore, $r(m)\qww$ is regularly varying of index $-2\kk$, and it
follows, using the assumption $\sum_m r(m)\qww=\infty$ when $\kk=1/2$, 
that $\hgss_m$ defined by \eqref{hgss} is regularly varying of index
$1-2\kk$, see 
\cite[Theorem 1.9.5(ii) and Propositions 1.5.8 and  1.5.9b]{RegVar}.
Hence $\hgss_{\xm}\sim\hgss_m$.
%, and thus $\gs_{\xm}\sim\hgs_{\xm}\sim\hgs_{m}\sim\gs_{m}$.
\end{proof}

\begin{proof}[Proof of \refT{TI2}]
Note first that \eqref{ti2mu} implies that $\mu(n)\to\infty$ as \ntoo. We
may  thus assume $\mu(n)\ge1$. Furthermore, 
\eqref{lrgb-} yields $\gam(n)=\gb(\floor{\mu(n)})\to\infty$ as \ntoo.
We may thus also replace $\mu(n)$ by $\floor{\mu(n)}$, and thus in the
sequel assume that $\mu(n)$ is an integer.

Fix $x\in\bbR$ and let
$m:=\ceil{\mu(n)+x\gam(n)}$.
By \eqref{ti2gam} and   \eqref{lrgb+}, as \ntoo,
\begin{align}
  \gam(n)=\gb\bigpar{\mu(n)}=o\bigpar{\mu(n)}
\end{align}
and thus $m\sim \mu(n)$. In particular, $m\to\infty$ as \ntoo{} and 
$m\ge1$ for all large $n$;  consider only such $n$.
Then, using \eqref{eq:NM},
\begin{align}
&  \P\bigpar{M_n\ge \mu(n)+x\gam(n)}
= \P\bigpar{M_n\ge m}
= \P\bigpar{N_m\le n}
\notag
\\&\qquad
= \P\lrpar{\frac{\log N_m - (y_m+\log r(m))}{\hgs_m} 
\le \frac{\log n - (y_m+\log r(m))}{\hgs_m}}.\label{julie}
\end{align}
In the last line, the random variable on the left of '$\le$' is
asymptotically normal by \eqref{tnsmall}; we turn to the term on the right.
By \eqref{ti2mu} and \eqref{hy}, $\hy_{\mu(n)}=\log n+O(1)$, and by
\eqref{hyy} this yields
\begin{align}\label{cec}
  \log n - \bigpar{y_m+\log r(m)}
= \hy_{\mu(n)} -\hy_m + O(1).
\end{align}
Write $\xm:=\mu(n)$.
Since $\xm\sim m$, it follows by \eqref{lrr} that
$r(k)\sim r(m)$ uniformly for all $k$ with $\xm\le k\le m$ or $m\le k\le
\xm$.
Consequently, by \eqref{eq:yk},
\begin{align}\label{zez}
  \hy_{\xm}-\hy_m \sim \frac{\xm-m}{r(m)} 
.%= \frac{-x\gam(n)+O(1)}{r(m)}
%=\frac{-x r(\xm)\hgs_{\xm}}{r(m)}+O(1)
\end{align}
Furthermore, by \eqref{lrr}--\eqref{lrgs},
\begin{align}
\gam(n)=\gb(\xm)=r(\xm)\hgs_{\xm} \sim r(m)\hgs_m,
\end{align}
and hence, using \eqref{lrr}--\eqref{lrgs} again,
\begin{align}
\frac{\xm-m}{r(m)} 
& \nonumber
= \frac{-x\gam(n)+O(1)}{r(m)}
=\frac{-x r(\xm)\hgs_{\xm}}{r(m)}+O(1)
\\&
=-x\hgs_m\etto+O(1).\label{xex}
\end{align}
Since $\hgs_m\to\infty$ by \eqref{gssoo}, it follows from \eqref{xex}
that
\begin{align}
\frac{\xm-m}{r(m)\hgs_m} 
\to -x
\end{align}
and thus \eqref{cec} and \eqref{zez} yield
\begin{align}\label{cac}
\frac{  \log n - \bigpar{y_m+\log r(m)}}{\hgs_m}
= \frac{\hy_{\xm} -\hy_m}{\hgs_m} + o(1)
\to -x.
\end{align}
Consequently \eqref{julie} and \refT{TNsmall} imply, with $\zeta\xsim N(0,1)$,
\begin{align}\label{thoth}
  \P\bigpar{M_n\ge\mu(n)+x\gam(n)} \to \P(\zeta \le -x) = \P(\zeta\ge x)
\end{align}
for every $x\in\bbR$, which proves \eqref{ti2}.

Finally, the limit \eqref{ld} is evidently mixing, i.e., it holds also
conditioned on any fixed set $Y_1,\dots,Y_{K}$ of the variables.
(See \cite[Proposition 2]{mixing}.) 
Using the argument in the proof of \refL{L2}, \eqref{ld} holds also 
conditioned on the sequence of indicators $\indic{X_k}$ is accepted, $k\le
N_K$, which is equivalent to conditioning on $M_1,\dots,M_{N_K}$.
It follows that the results above, including
 \eqref{thoth} and \eqref{ti2}, hold also conditioned on 
$M_1,\dots,M_{N_K}$,
and thus a fortiori also conditioned on $M_1,\dots,M_K$.
Hence \eqref{ti2} is mixing.
\end{proof}

\begin{remark}
  It can be seen from the proof that the assumption that $r(m)$ be regularly
  varying may be replaced by the weaker
  \begin{align}
    \text{if}\quad\xm=m+O(\gb(m)), 
\quad \text{then}\quad
r(\xm)\sim r(m).
  \end{align}
\end{remark}

\section{Examples}\label{Sex}

\begin{example} \label{Emedian} 
  Consider \hmedian. By \eqref{hmedian}, this is the case
$r(m)=\floor{m/2}+1$.
Hence, \eqref{per} holds with $\nu=1$ and $q=2$, and thus
\refT{Tper} applies and yields
\begin{align}\label{EWmed}
  \E W^s = 2^s \gG\Bigpar{\frac{s}2+1},
\end{align}
which shows that $(W/2)^2\xsim\Exp(1)$ and that $W$ thus has a Rayleigh
distribution with density function
\begin{equation}\label{fWmed}
  \tfrac12 x e^{-x^2/4}, 
\qquad x>0.
\end{equation}
Consequently, \refT{TI1} shows that $M_n/n\qq$ converges 
in distribution to this Rayleigh distribution, as shown by
\citet{HelmiP-median2013}; 
moreover, \refT{TI1} shows \as{} convergence, and convergence of all moments.
(\cite{HelmiP-median2013} treated only the mean.)

Furthermore,
the definition \eqref{gd} yields $\gd_k=\indic{m\text{ odd}}$, and
consequently, \eqref{Z} yields
\begin{align}
  Z=\sumj \frac{1}{r(2j-1)}(E_{2j-1}-1)
=\sumj \frac{1}{j}(E_{2j-1}-1)
\eqd\sumj \frac{1}{j}(E_{j}-1).
\end{align}
It is well-known that this sum yields a centered Gumbel distribution:
\begin{equation}\label{Zgumbel}
  \P(Z\le x) = e^{-e^{-(x-\gam)}}.
\end{equation}
(Recall that by definition, $\E Z=0$.)
This can be seen in several ways, for example by computing the \mgf{} 
$\E e^{sZ}=e^{-\gam s}\gG(1-s)$, $\Re s<1$, by arguments similar to the proof
of \refT{Tper}, or directly from \eqref{W} and the identification of $W$ as
a Rayleigh distribution; we omit the details.
\end{example}

\begin{example}  \label{Epercentile}
Let $0<\ga\le1$ and consider \hpercentile{\ga}, \ie{} $r(m)=\ceil{\ga m}$ by
\eqref{hpercentile}. 
\refT{TI1} applies and shows convergence $M_n/n^\ga\to W_\ga$, a.s., in
distribution, and with all moments, to some positive random variable $W_\ga$
with moments given by \eqref{ti1}.
In particular, when $\ga$ is rational, the moments can be calculated (in
terms of the Gamma function) by \refT{Tper}.
We give a few examples in \refTab{TABpercent}.
(The expectations were given in  \cite{GaitherWard2012}, see below.
Note that the results can be written in different forms, using standard
  Gamma  functions identities; 
% for $\ga=2/5$ we have also simplified using the duplication  formula. 
cf.\ the partly different but equivalent formulas for the expectations 
given here and in  \cite{GaitherWard2012}.)
In particular, note that \eqref{tomas4} yields for the special case
$\nu=1$ (where $r(i)=1$ for $1\le i\le q$)
\begin{align}\label{EW1}
\E W_{1/q}^s &= \frac{q^{s+1}}{s+q} \gG\Bigpar{\frac{s}{q}+1}^{q-1} 
= \frac{q^{s+2-q}s^{q-1}}{s+q} \gG\Bigpar{\frac{s}{q}}^{q-1} 
\end{align}
and for $\nu=q-1$
(where $r(i)=i$ for $1\le i\le q-1$ and $r(q)=\nu=q-1$)
\begin{align}\label{EWq-1}
\E W_{\nuq/q}^s &
= \frac{q^{s+1}}{\nuq^{\nuq s/q}\xpar{\nuq s+q}} 
  \gG\Bigpar{\frac{s}{q}+1}
= \frac{sq^{s}}{\nuq^{\nuq s/q}\xpar{\nuq s+q}} 
  \gG\Bigpar{\frac{s}{q}}.
\end{align}

\begin{figure}
  \centering
  \begin{tabular}{r|c|c}
    $\ga$ &  $\E W_\ga$ & $\E W_\ga^2$ \\
\hline
1 & $\frac12$ & $\frac13$\\[3pt]
$\frac12$ & $\frac{2\sqrt\pi}{3}$ & $2$ \\[3pt]
$\frac13$ & $\frac{1}{4}\gG\bigparfrac{1}{3}^2$ 
   & $\frac{12}{5}\,\gG\bigparfrac{2}{3}^2 $ \\[3pt]
$\frac23$ & $\frac{3}{2^{2/3}\cdot5}\gG\bigparfrac{1}{3}$ 
   & $\frac{9}{2^{1/3}\cdot7}\gG\bigparfrac{2}{3}$  \\[3pt]
$\frac14$ & $\frac{1}{20}\gG\bigparfrac{1}{4}^3$
  & $\frac{4}{3}\pi^{3/2}$ \\[3pt]
$\frac34$ & $\frac{4}{3^{3/4}\cdot7}\gG\bigparfrac{1}{4}$ 
  & $\frac{16}{3^{3/2}\cdot5}\pi\qq$  \\[3pt]
$\frac15$ & $\frac{1}{150}\gG\bigparfrac{1}{5}^4$
  & $\frac{16}{35}\gG\bigparfrac{2}{5}^4$ \\[3pt]
$\frac25$ & $\frac{2^{1/5}}{7}\gG\bigparfrac{1}{5}\gG\bigparfrac{2}{5}$
 & $\frac{2^{7/5}\cdot 5}{9}\gG\bigparfrac{2}{5}\gG\bigparfrac{4}{5}$ \\[3pt]
%%SJ251a:=GAMMA(1/5)^2*GAMMA(7/10)/(2^(2/5)*7*sqrt(Pi));
%SJ251b:=GAMMA(1/5)*GAMMA(2/5)*2^(1/5)/7;
%SJ252:=GAMMA(2/5)*GAMMA(4/5)*2^(7/5)*5/9;
$\frac35$ & $\frac{5\gG\parfrac{13}{15}\gG\parfrac{1}{5}}
  {{3^{3/5}\cdot 8}\gG\parfrac23}$
 & $\frac{10\gG\parfrac{1}{15}\gG\parfrac{2}{5}}
       {{3^{11/5}\cdot 11}\gG\parfrac23}$ 
%SJ351:=GAMMA(13/15)*GAMMA(1/5)*5/(3^(3/5)*8*GAMMA(2/3));
%SJ352:=GAMMA(1/15)*GAMMA(2/5)*10/(3^(11/5)*11*GAMMA(2/3));
\\[6pt]
$\frac45$ & $\frac{5}{2^{8/5}\cdot9}\gG\bigparfrac{1}{5}$
 & $\frac{25}{2^{11/5}\cdot13}\gG\bigparfrac{2}{5}$
\\[3pt]
%SJ451:=GAMMA(1/5)*5/(9*2^(8/5));
%SJ452:=GAMMA(2/5)*25/(13*2^(11/5));
\hline
  \end{tabular}
  \caption{Some values of the asymptotic first and second moments for  
\hpercentile{\ga}.}
  \label{TABpercent}
\end{figure}

The  expectations $c_\ga:=\E W_\ga = \lim_{\ntoo} \E M_n/n^\ga$ have been
considered before.
\citet{KriegerPS-rank2007} found that $W_1\sim U(0,1)$ and thus 
$c_1=1/2$, but otherwise showed only existence of the limit $c_\ga$.
\citet{GaitherWard2012} computed 
$c_\ga$ (our $\E W_\ga$)
as
\begin{equation}\label{GW}
\newcommand\pga{\ga}
c_\ga= \frac{1+\sum_{k\ge1}
  \frac{\ceil{\pga k}-\pga k}{\ceil{\pga k}}\prod_{j=1}^k \frac{1}{1+\xfrac{\pga }{\ceil{\pga j}}}}
{(\pga +1)\gG(\pga +1)}.
\end{equation}
In the case $\ga=\nu/q$ rational, 
they showed further how this can be transformed
into a form that they could evaluate symbolically; as examples they gave
explicit values for all cases with $q\le 6$.
The formula \eqref{GW} must agree with \eqref{ti1} for $s=1$, i.e.,
\begin{equation}\label{cga}
 c_\ga=\E W_\ga= \frac{1}{\gG(\ga+2)}\prodk\frac{1+1/k}{1+\ga/\ceil{\ga k}},
\end{equation}
but we do not see any direct proof of this.
The explicit values for $\ga$ rational are obtained more easily from
\eqref{tomas}--\eqref{tomas4}; 
in particular, for the cases $\nu=1$ and $\nu=q-1$, we can take $s=1$ in
\eqref{EW1}--\eqref{EWq-1}.

\citet{GaitherWard2012} gave a graph of the function $\ga\mapsto c_{\ga}$,
and conjectured that it is continuous at all irrational $\ga$ but only
left-continuous at rational $\ga$.
This is easily verified from our form \eqref{cga}, since 
the infinite product converges unifomly on each interval $[a,1]$, 
and each factor in it is continuous at irrational $\ga$ and left-continuous
everywhere, but for each rational $\ga$ there are factors that have jumps;
furthermore, the jumps in the factors are always positive. Hence, $c_\ga$ has
a positive jump at each rational $\ga\in(0,1)$.

Let us consider the case $\ga=1/2$ in more detail. 
For comparisons, let $\Wmed$ denote the limit variable for \hmedian{}
in \refE{Emedian}.
Then \eqref{tomas2} and \eqref{EWmed} yield
\begin{align}
\E W_{1/2}^s = \frac{2^s}{1+s/2} \gG\Bigpar{\frac{s}2+1}  
=\frac{1}{s/2+1}\E \Wmed^s
\end{align}
and thus 
\begin{align}
  \label{WW1/2}
W_{1/2}\eqd U\qq\Wmed, 
\end{align}
where $U\sim U(0,1)$ is independent of $\Wmed$.
In particular, $\E W_{1/2}=2\sqrt\pi/3=\frac{2}3\E\Wmed$, as shown by
\citet{HelmiP-median2013}, who also gave a formula for the density function
of $W_{1/2}$ which is equivalent to \eqref{WW1/2}.

The relation between $W_{1/2}$ and $\Wmed$ is studied further in \refE{E+1}
and \refE{Er-}.
\end{example}

\begin{example}\label{E+1}
One way to regard the difference between \hmedian{} and \hpercentile{\frac12}
is that their sequences $(r(m))_0^\infty$ are $1,1,2,2,\dots$ and
$1,1,1,2,2,\dots$, respectively, and thus differ only by an extra 1 in the
latter case.

We can study this in general. Given a sequence $r(m)$ satisfying
\eqref{eq:rr}, define a new sequence $\xr(m)$ by inserting an extra 1 first,
i.e., let $\xr(m):=r(m-1)$, $m\ge1$. We use $\tilde{}$ to denote variables
for the new sequence.
It follows from \eqref{Z} that
\begin{align}\label{xZ}
  \xZ\eqd Z+E-1
\end{align}
with $E\xsim\Exp(1)$ independent of $Z$. Suppose now that $r(m)=\ga m+O(1)$,
or more generally that \eqref{sofie} holds. Then the same is true for
$\xr(m)$; furthermore it is easy to see from \eqref{rho} that
$\xrho=\rho+1$, and thus
\eqref{W} yields, using \eqref{xZ},
\begin{align}
  \xW \eqd e^{-\ga E} W = U^{\ga} W
\end{align}
where $U=e^{-E}\xsim U(0,1)$ is independent of $W$.
Equivalently, 
\begin{align}\label{erika}
\E \xW^s = \frac{1}{1+\ga s}\E W^s,
\end{align}
which also follows from \eqref{ti1}.

In fact, this has a simple probabilistic explanation. 
In the modified strategy, the first candidate is, as always, accepted, and
because $\xr(1)=1$, the threshold for the next candidate is $\xY_1=X_1$. 
Since the threshold never decreases (see \refL{L2}), this means that only
candidates better than $X_1$ have a chance of being considered. Moreover, it
is easy to see that if we consider only the subsequence of candidates with
values $X_n>X_1$, then the ones hired by the modified strategy are precisely
those that would have been hired by the original strategy applied to this
subsequence of candidates. Conditioning on $X_1=x_1$, the values in the
subsequence will be independent with the conditional distribution 
$\cL(E+x_1)$, and subtracting $x_1$ from all values, we obtain the original
problem for the original sequence. However, still conditioned on $X_1$,
if we start with a sequence of
$n$ candidates, the subsequence will contain only $\Bin(n-1,e^{-X_1})$
candidates. Note that $U:=e^{-X_1}\xsim U(0,1)$.
It follows, using the law of large numbers,
that if \refT{TI1} applies to the original strategy, then
it holds for the modified one too, with
\begin{align}
\tW \eqd U^\ga W,
\end{align}
where $U\xsim U(0,1)$ is independent of $W$.
\end{example}

\begin{example}\label{Er-}
  Another way to view the difference between \hmedian{} and
  \hpercentile{\frac12} is that $r(m)$ has been decreased by 1 for every
  even $m\ge2$. Let us consider, in general, 
the effect of decreasing a single value
  $r(m)$ by 1, assuming that this is possible (i.e., that $r(m-1)<r(m)=r(m+1)$).
Assume also for simplicity that \refT{TI1} applies.
Then \eqref{ti1} shows that $W$ is modified such that $\E W^s$ is multiplied
by
\begin{equation}
  \frac{1+s\ga/r(m)}{1+s\ga/(r(m)-1)} 
= \frac{r(m)-1}{r(m)} +   \frac{1/r(m)}{1+s\ga/(r(m)-1)} 
= \E V^{s\ga/(r(m)-1)},
\end{equation}
where $V$ has density $1/r(m)$ on $(0,1)$ and a point mass
$\P(V=1)=1-1/r(m)$.
Hence, the modified limit $\xW\eqd V^{\ga/(r(m)-1)}W$.
This can be repeated for several changes.

In particular, looking just at the expectation, decreasing $r(2)$ from 2 to
1 in \hmedian{} multiplies $\E W$ by $(1+\frac{1}4)/(1+\frac12)=5/6$.
As seen above, $\E W$ decreases by a factor $2/3$ if we change \hmedian{} to
\hpercentile{1/2}, and we now see that half of the decrease is due to the
decrease of $r(2)$.
This illustrates that, as said in \refS{S:intro}, 
in the case of large $r(m)$, the asymptotic behaviour
is heavily influenced by the effects of the first candidates.
\end{example}

\begin{example}\label{Ekapad}
Another variation of \hmedian{} is to take the sequence 1,2,2,3,3,\dots,
i.e.,
\begin{align}
  r(m)=\ceil{m/2}+1,
\qquad m\ge0.
\end{align}
\refT{Tper} applies and yields, \eg{} by \eqref{tomas2},
\begin{align}\label{eleonora}
  \E W^s = 2^s\gG\Bigpar{\frac{s}2+2}.
\end{align}
This follows also from \refE{E+1}, since if we insert an extra 1 first in
this sequence $r(m)$, we obtain \hmedian{} as in \refE{Emedian}, 
and thus by \eqref{erika} and \eqref{EWmed},
\begin{align}
  2^s\gG\Bigpar{\frac{s}2+1} = \frac{1}{s/2+1}\E W^s.
\end{align}
It follows from \eqref{eleonora} that $W^2/4\xsim\gG(2)$, and thus $W$ has
the density
\begin{align}\label{Wkapad}
  f_W(x) = \frac{x^3}{8}e^{-x^2/4}, \qquad x>0.
\end{align}
Equivalently, $W/\sqrt2\sim\chi(4)$, a chi distribution.

We return to the significance of this example in \refS{Scond}.
\end{example}

\begin{example} \label{Erecord}
  The extreme case of small $r(m)$ is $r(m)=1$, $m\ge0$.
This means the we only accept candidates that are better than all previous
candidates, i.e., the record values in the sequence $(X_n)$. 

\refT{TI2} applies with $\mu(n)=\log n$, $\gb(m)^2=m$ and
$\gam(n)^2=\floor{\log n}\sim \log n$, which yields
\begin{align}
\frac{  M_n-\log n}{\log\qq n}\dto N(0,1).
\end{align}

This is a well-known result for the number of records, see \eg{}
\cite[Theorem 7.4.2]{Gut}, and is easily proved directly by the central
limit theorem, 
observing that the indicators $I_k:=\indic{X_k \text{ is a record}}$
are independent with $I_k\sim \Be(1/k)$.
See further the next example.
(This connection between records and the hiring problem was noted by 
\cite{BroderEtAl}.)
\end{example}

\begin{example}\label{Ebest}
More generally, consider \hbest{r} for a fixed $r\ge1$, with $r(m)$ given by
\eqref{hbest}. Thus \refE{Erecord} is the case $r=1$.
This strategy was studied by \citet{ArchibaldMartinez2009} and, in great
detail, by
\citet{HelmiMP-best2014}.
A value $X_k$ is accepted if it is an \emph{$r$-record}, in the sense that it is
one of the $r$ best values seen so far.
(In particular, 
the first $r$ values $X_k$ are always accepted.)

\refT{TI2} applies with $\mu(n)= r\log n$, $\gb(m)^2\sim m$ and
$\gam(n)^2\sim r\log n$, which yields
\begin{align}
\frac{  M_n-r\log n}{(r\log n)\qq}\dto N(0,1),
\end{align}
as shown by \cite{HelmiMP-best2014} (who also gave many other results,
including for fixed $n$, and for the case when both $n,r\to\infty$).
Again, this is easily shown directly by the central limit theorem, using the
fact that 
the indicators $I_k:=\indic{X_k \text{ is an $r$-record}}$
are independent with $I_k\sim \Be(r/k)$ for $k\ge r$, which is noted in
\cite{HelmiMP-best2014}, see also the furthern references given there.
 \end{example}

 \begin{example}
   Let $r(m):=\floor{\sqrt{m}}$, $m\ge1$.
This is an example of small $r(m)$. (But rather large among the small ones.)
We have 
\begin{align*}
  \gss_m = \hgss_m+O(1)
=\sumkm \frac{1}{\floor{\sqrt k}^2}+ O(1)
=\sumkm \frac{1}{k}+ O(1)
=\log m+ O(1)
\end{align*}
and
\begin{align}
  \hy_m=\sumkm\frac{1}{\floor{\sqrt k}}
=\sum_{j=1}^{\floor{\sqrt m}}\frac{2j+1}{j} + O(1)
=2\sqrt m + \tfrac12\log m + O(1).
\end{align}
It follows that \refT{TI2} applies with 
\begin{align}
\mu(n)= \tfrac{1}{4}\log^2n -\tfrac12\log n\log\log n,
\end{align}
$\gb(m)^2=r(m)^2\hgss_m\sim m\log m$
and
\begin{align}
\gam(n)^2\sim \mu(n)\log \mu(n) \sim \tfrac12\log^2n \log\log n.  
\end{align}
Hence, \refT{TI2} yields
\begin{align}
  \frac{M_n - (\tfrac{1}{4}\log^2n -\tfrac12\log n\log\log n)}
{\log n (\log\log n)\qq} \dto N\Bigpar{0,\frac12}.
\end{align}
 \end{example}

 \begin{example}
   \label{Eirreg}
We give an example showing how \refT{TI2} can fail when the sequence $r(m)$
is too irregular.

Let $r(m)$ be such that $r(8^i)=2^i$, 
$r(m)$ increases linearly on each interval $[8^i,8^i+2^i]$, and is constant
between these intervals; i.e.,
\begin{align}
  r(m):=
  \begin{cases}
    2^i+m-8^i, & 8^i\le m\le 8^i+2^i,\quad i\ge0\\
    2^{i+1}, & 8^i+2^i\le m\le8^{i+1},\quad i\ge0.
  \end{cases}
\end{align}
Thus $r(m)\asymp m^{1/3}$ (meaning that $cm^{1/3} \le r(m) \le C m^{1/3}$
for some constants $c$ and $C$), and thus \eqref{hgss}--\eqref{gss2} yield 
$\gss_m\sim\hgss_m\asymp m^{1/3}$. Hence $\gb(m)^2=r(m)^2\hgss_m\asymp m$ so
$\gb(m)\asymp m\qq$.
We see also that $\xm\sim m$ implies $\hgs_{\xm}\sim \hgs_m$.

Define $m_i=8^i$ and $n_i:=\floor{\exp\bigpar{\hy_{8^i}}}$, $i\ge1$.
Consider only the subsequence $(n_i)_i$. We have chosen $n_i$ such that
\eqref{ti2mu} holds with $n=n_i$ and $\mu(n)=8^i$, so we may choose $\mu(n)$
such that $\mu(n_i)=8^i=m_i$ for $i\ge1$.
Then $\gam(n_i)=\gb(8^i)\asymp 2^{3i/2}$.

Now argue as in the proof of \refT{TI2} for $n=n_i$. Thus
$\xm=\mu(n_i)=m_i=8^i$.
Suppose first that $x<0$. Then $m\le\xm$, and $r(k)=2^i=r(m_i)$ for every
$k\in[m,\xm]$ (at least if $i$ is large); hence, as $i\to\infty$,
\begin{align}
  \hy_{\xm}-\hy_m=\frac{\xm-m}{r(m_i)}\sim \frac{-x\gam(n_i)}{r(m_i)}
=\frac{-x\gb(m_i)}{r(m_i)}=-x\hgs_{m_i},
\end{align}
and since $\hgs_{m}\sim\hgs_{m_i}$, 
\begin{align}
\frac{\hy_{\xm}-\hy_m}{\hgs_m}\to-x.
\end{align}
On the other hand, if $x>0$, then $m\ge \xm$, and for most $k\in[\xm,m]$,
$r(k)=2^{i+1}=2r(m_i)$. Hence, as \itoo, similarly,
\begin{align}
  \hy_{\xm}-\hy_m\sim -\frac{m-\xm}{2r(m_i)}
\sim -\frac{x\gam(n_i)}{2r(m_i)}
%=\frac{-x\gb(m_i)}{r(m_i)}
=-\frac{x}2\hgs_{m_i},
\end{align}
and
\begin{align}
\frac{\hy_{\xm}-\hy_m}{\hgs_m}\to-\frac{x}2.
\end{align}
Instead of \eqref{thoth}, we thus obtain
\begin{align}
    \P\bigpar{M_{n_i}\ge\mu(n_i)+x\gam(n_i)} \to 
    \begin{cases}
\P(\zeta \le -x) = \P(\zeta\ge x), & x<0,
\\
\P(\zeta \le -x/2) = \P(2\zeta\ge x), & x>0.      
    \end{cases}
\end{align}
Consequently, along the sequence $(n_i)$, $M_n$ after normalization
as in \eqref{ti2} converges to the non-normal random variable
\begin{align}
\eta:=
    \begin{cases}
\zeta, & \zeta \le 0,
\\
2\zeta, & \zeta >0,
    \end{cases}
\end{align}
where $\zeta\in N(0,1)$.

On the other hand, for many other subsequences there is asymptotic
normality (by the same proof), for example for $\floor{\exp(\hy_{2\cdot 8^i})}$.
 \end{example}

\section{Conditioning on the first value}\label{Scond}
We have seen above that in the case of large $r(m)$, the asymptotics depend
heavily on the first values $X_k$, and thus in particular on the first value
$X_1$.  Furthermore, as have been remarked by \cite{BroderEtAl}, assuming
$r(1)=1$, so that the second accepted candidate is the first one with
$X_n>X_1$, the waiting time $N_2-N_1$ until the second candidate is accepted
has, conditioned on $X_1$, the distribution $\Ge\bigpar{e^{-X_1}}$ with
expectation 
\begin{align}
  \E \bigpar{N_2-N_1\mid X_1}= e^{X_1} =1/U,
\end{align}
where $U:=e^{-X_1}\xsim U(0,1)$. Consequently, $\E (N_2-N_1)=\E
U\qw=\infty$, and thus $\E N_m=\infty$ for every $m\ge2$.

These effects led \cite{BroderEtAl} to consider \hmedian{}
conditioned on $X_1$. We can do this in general. We assume that $r(m)$ is
large, since for small $r(m)$, conditioning on $X_1$ has no effect on the
asymptotics, see \eg{} the mixing property in \refT{TI2}.

\begin{theorem}\label{Tcheck}
  Suppose that $r(m)$ is large and that $r(1)=1$, 
and define $\chr(m):=r(m+1)$, $m\ge0$.
Conditioned on $X_i=x_1$, the results in \refT{TI1} and
\refSs{Slarge}--\ref{Slarge++} hold,
mutatis mutandis, with 
$r(m)$ replaced by $\chr(m)$ and
$n$ replaced by $pn$, where
$p:=\P(X_2>x_1)=e^{-x_1}$.

In particular,  when \refT{TI1} applies, \eqref{limW} extends to
  \begin{align}\label{tcheck}
M_n/n^\ga\asto  W= p^\ga \chW = e^{-\ga X_1} \chW,
  \end{align}
where $\chW$ is independent of $X_1$ and has moments as in \eqref{ti1} for
the sequence $\chr(m)$.
\end{theorem}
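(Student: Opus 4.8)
The key observation, already exploited in \refE{E+1}, is that since $r(1)=1$ the first candidate is accepted and then $Y_1=X_1$, and, as the threshold never decreases (\refL{L2}), after that no candidate with value $\le X_1$ is ever accepted. So, conditioning on $X_1=x_1$, I would pass to the subsequence $X'_1,X'_2,\dots$ of the later candidates $X_2,X_3,\dots$ whose value exceeds $x_1$, listed in order of appearance. By the lack-of-memory property of the exponential distribution, conditionally on $X_1=x_1$ the variables $X'_j-x_1$ are \iid{} $\Exp(1)$; and if $N'_n:=\#\{2\le k\le n:X_k>x_1\}$, then the law of large numbers gives $N'_n/n\asto p:=e^{-x_1}$ and $N'_n\asto\infty$.

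The core of the proof is a deterministic statement: the candidates accepted by the original strategy, other than $X_1$ itself, coincide with those accepted by the hiring strategy with rank sequence $\chr$ run on the sequence $(X'_j-x_1)_{j\ge1}$. I would prove this by induction on the number of accepted candidates. Suppose the original process has accepted $X_1$ together with $k$ of the $X'_j$, a total of $m=k+1$ values; among them $x_1=X_1$ is the strict minimum, so $Y_m$, the $r(m)$-th largest accepted value, equals the $r(k+1)$-th largest of the $k$ accepted $X'_j$ when $r(k+1)\le k$, and equals $x_1$ when $r(k+1)=k+1$. In the first case, $X'_j>Y_m$ holds iff $X'_j-x_1$ exceeds the $\chr(k)$-th largest of the $k$ values $X'_i-x_1$ already accepted, which is exactly the acceptance rule for the $\chr$-strategy after $k$ acceptances; in the second case $\chr(k)=k+1$, the $\chr$-strategy always accepts, and indeed $X'_j-x_1>0$. (Here $\chr$ satisfies \eqref{eq:rr}: $\chr(0)=r(1)=1$, the conditions $\chr(m)\le\chr(m+1)\le\chr(m)+1$ are inherited from $r$, and $r(1)=1$ forces $r(m+1)\le m+1$, so $\chr(m)\le m+1$.) Hence
\[
  M_n=1+\check M_{N'_n},
\]
where $\check M_{n'}$ denotes the number of candidates accepted among the first $n'$ when the $\chr$-strategy is applied to $(X'_j-x_1)$; equivalently the original process viewed up to time $n$ is the $\chr$-process viewed up to time $N'_n\sim pn$, which is the meaning of ``$n$ replaced by $pn$''.

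It then remains to transport the asymptotics. Since $\sum_m\chr(m)\qww=\sum_{m\ge1}r(m)\qww<\infty$, the sequence $\chr$ is large, and if \eqref{sofie} holds for $r$ with exponent $\ga$ then it holds for $\chr$ with the same $\ga$; thus \refT{TI1} and the results of \refSs{Slarge}--\ref{Slarge++} apply verbatim to the $\chr$-process, giving $\check M_{n'}/(n')^\ga\asto\chW$ with convergence of all moments, where $\chW$ has the moments \eqref{ti1} for $\chr$. By its construction in \refSs{Slarge}--\ref{Slarge++} (see \refL{LZ} and \refT{TN4}), $\chW$ is a measurable function of the \iid{} sequence $(X'_j-x_1)$, hence, conditionally on $X_1=x_1$, independent of $X_1$; and since its law does not depend on $x_1$, it is unconditionally independent of $X_1$. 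Finally, from
\[
  \frac{M_n}{n^\ga}=\frac{\check M_{N'_n}}{(N'_n)^\ga}\Bigpar{\frac{N'_n}{n}}^{\ga}+\frac{1}{n^\ga}
\]
together with $N'_n\asto\infty$ and $N'_n/n\asto p$, one obtains $M_n/n^\ga\asto p^\ga\chW=e^{-\ga X_1}\chW$ conditionally on $X_1=x_1$, which is \eqref{tcheck}; the moment convergence transfers because $N'_n\le n$ gives $M_n\le1+\check M_n\le 2\check M_n$, so $(M_n/n^\ga)^s$ is bounded in $L^1$ uniformly in $n$ by \refL{LtM} for the $\chr$-process, yielding uniform integrability. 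The corresponding statements for $N_m$ and the other quantities of \refSs{Slarge}--\ref{Slarge++} follow from the same time change. The only delicate step is the deterministic identification of the two processes; everything else is bookkeeping together with an appeal to the already-established large-$r$ theory.
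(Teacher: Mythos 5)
Your proposal is correct and follows essentially the same route as the paper, which reduces the conditioned process to the $\chr$-strategy applied to the subsequence of candidates exceeding $x_1$ (as in \refE{E+1}) and invokes the law of large numbers for $N'_n\sim pn$; you merely make explicit the details the paper omits (the inductive identification of the two processes, the check that $\chr$ satisfies \eqref{eq:rr} and \eqref{sofie}, and the uniform-integrability transfer of moments via \refL{LtM}).
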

For a different distribution of the values $X_n$, \eg{} uniform, $p$ is of
course given by the corresponding tail probability.

\begin{proof}
  This was explained already in \refE{E+1}, although we here modify in the
  opposite direction, so the original sequence here is the modified one there.
As explained in \refE{E+1}, of the first $n$ candidates, the ones accepted
are the first one and then the candidates accepted using the strategy
given by $\chr(m)$ on the candidates that pass the test $X_k>x_1$.
For asymptotics, we can ignore the first accepted candidate, and thus the
results are the same as for $\chr(m)$ with $n$ replaced by
$\chN_n$, the (random) 
number of values $X_k$, $2\le k\le n$, such that $X_k>x_1$.
%$\ch N_n\xsim\Bi\bigpar{n-1,  e^{-X_1}}$ candidates 
By the law of large numbers, \as{} $\chN_n\sim pn$, and the result follows.
We omit the details.
\end{proof}

\begin{example}
  Consider again \hmedian{} as in \refE{Emedian}, but condition on $X_1$.
The sequence $\chr(m):=r(m+1)$ then is the one studied in \refE{Ekapad};
thus we find, for example, see \eqref{tcheck}, that conditioned on $X_1=x_1$,
\begin{align}
M_n/n\qq\asto p\qq \chW  
\end{align}
where $p=e^{-x_1}$ and $W$ has the distribution with density \eqref{Wkapad}.
This (and more) has been shown by \citet[Theorem 5]{HelmiP-median2013}.
Moment convergence holds too, and thus, using \eqref{eleonora}, for
$-2<s<\infty$, 
\begin{align}
  \E \bigpar{M_n^s\mid X_1=x_1} \sim e^{-sx_1/2}\E \chW^s n^{s/2}
=e^{-sx_1/2} 2^s \gG\Bigpar{\frac{s}2+2}  n^{s/2}.
\end{align}
\end{example}

\begin{problem}
  What happens if we condition on $X_1$ in a case with $r(1)=2$?
\end{problem}

\section{Probability of accepting and length of gaps}\label{Sgaps}

Let $I_n:=\indic{X_n \text{ is accepted}}$ and $p_n:=\E I_n$
be the indicator and the probability that candidate
$n$ is accepted; thus $M_n=\sumkn I_k$ and $\E M_n=\sumkn p_k$.
Let $\YYn$ be the current threshold when candidate $n$ is examined.
We then have accepted $M_{n-1}$ candidates, and thus
\begin{align}\label{yyn}
  \YYn&=Y_{M_{n-1}}.
  \end{align}
Furthermore, if $P_n$ is the conditional probability that $X_n$ is accepted
given the past,
  \begin{align}\label{emmPn}
P_n:&=  \E \bigpar{I_n\mid X_1,\dots,X_{n-1}}= e^{-\YYn}=e^{-Y_{M_{n-1}}}
  \end{align}
and thus
  \begin{align}\label{emmpn}
p_n &= \E P_n = \E e^{-Y_{M_{n-1}}}.
\end{align}
We return to a more explicit asymptotic result in the case \eqref{sofie}
in \refT{TPn} below.

Conditioned on $\YYn$, or equivalently on $P_n$,
the waiting time until the next candidate is accepted
is $\Ge(P_n)$. We will see that asymptotically, the same holds if we go back
in time from $n$ to the last acceptance.
The next lemma excludes some extreme cases.

\begin{lemma}\label{LE}
If
\begin{align}\label{le}
  \sumk \frac{\gd_k}{r(k)} = \infty,
\end{align}
then $Y_m\asto\infty$ as \mtoo, and thus $P_n\asto0$ and $p_n\to0$ as \ntoo.

Conversely, if the sum in \eqref{le}
converges, then $Y_m\asto Y_\infty<\infty$.
\end{lemma}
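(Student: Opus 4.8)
The plan is to read everything off the representation $Y_m=\sum_{k=1}^m\frac{\gd_k}{r(k)}E_k$ of \refL{LY}. The summands $\frac{\gd_k}{r(k)}E_k$ are independent and nonnegative, so $Y_m$ is nondecreasing in $m$, and hence \as{} $Y_m\uparrow Y_\infty$ for some $Y_\infty\in\ooox$; the whole lemma then reduces to deciding whether $Y_\infty$ is finite. I would dispose of the converse first: if $\sumk\gd_k/r(k)<\infty$, then monotone convergence gives $\E Y_\infty=\lim_m\E Y_m=\lim_m y_m=\sumk\gd_k/r(k)<\infty$ by \eqref{eq:yk}, so $Y_\infty<\infty$ \as{}, which is exactly $Y_m\asto Y_\infty<\infty$.

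For the main direction I would compute a Laplace transform. Using $\E e^{-tE}=1/(1+t)$ for $t\ge0$ and independence,
\[
  \E e^{-Y_m}=\prod_{k=1}^m\E e^{-\gd_k E_k/r(k)}
  =\prod_{k=1}^m\frac{r(k)}{r(k)+\gd_k}
  =\prod_{\substack{1\le k\le m:\\ \gd_k=1}}\frac{r(k)}{r(k)+1}.
\]
Since $r(k)\ge1$, whenever $\gd_k=1$ one has $\log\bigpar{1+1/r(k)}\asymp 1/r(k)$, so the series $\sum_{k:\gd_k=1}\log\bigpar{1+1/r(k)}$ and $\sumk\gd_k/r(k)$ converge or diverge together. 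Hence hypothesis \eqref{le} forces the product above to tend to $0$, i.e.\ $\E e^{-Y_m}\to0$. As $(e^{-Y_m})_m$ is a nonincreasing sequence bounded by $1$, it converges \as{} to $e^{-Y_\infty}$, and bounded convergence gives $\E e^{-Y_\infty}=0$; therefore $Y_\infty=\infty$ \as{}, that is, $Y_m\asto\infty$. (Alternatively one can observe that $\{Y_\infty<\infty\}$ is a tail event and combine Kolmogorov's zero--one law with $\E e^{-Y_\infty}=0$.)

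The consequences are then immediate: by \eqref{emmPn}, $P_n=e^{-Y_{M_{n-1}}}$, and since $M_{n-1}\asto\infty$ and $Y_m\asto\infty$ we get $Y_{M_{n-1}}\asto\infty$, hence $P_n\asto0$; as $0\le P_n\le1$, dominated convergence gives $p_n=\E P_n\to0$. The only slightly delicate point in the whole argument is passing from $\E e^{-Y_m}\to0$ to \as{} convergence of $Y_m$ to $\infty$, and the monotonicity of $m\mapsto e^{-Y_m}$ takes care of this automatically, so I do not expect any real obstacle.
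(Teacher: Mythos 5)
Your proof is correct, and for the key step it takes a genuinely different route from the paper. The converse and the deduction of $P_n\asto0$, $p_n\to0$ are essentially the paper's argument: the paper also reads off from \eqref{eq:ly} that $Y_m$ increases to $Y_\infty=\sum_k\frac{\gd_k}{r(k)}E_k\le\infty$, notes that the sum in \eqref{le} is $\E Y_\infty$ (so finiteness of the sum gives $Y_\infty<\infty$ \as), and then uses $M_{n-1}\asto\infty$ together with \eqref{emmPn} and dominated convergence exactly as you do. The difference is in proving \as{} divergence of $Y_m$ under \eqref{le}: the paper invokes Kolmogorov's three-series theorem (or, alternatively, \refL{LZ} when $r(m)$ is large and \refL{LD} plus $Y_m/y_m\pto1$ otherwise), whereas you compute $\E e^{-Y_m}=\prod_{k\le m:\,\gd_k=1}\frac{r(k)}{r(k)+1}$, use $\log(1+1/r(k))\asymp 1/r(k)$ (valid uniformly since $r(k)\ge1$) to see that \eqref{le} forces this product to $0$, and then exploit monotonicity of $e^{-Y_m}$ and bounded convergence to conclude $\E e^{-Y_\infty}=0$, hence $Y_\infty=\infty$ \as. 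Your Laplace-transform argument is self-contained and more elementary: it avoids both the three-series theorem and the paper's case split between large and small $r(m)$, at the modest cost of one explicit product computation; the paper's citation route is shorter on the page but leans on heavier standard machinery. Both are perfectly sound.
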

\begin{proof}
It follows from \eqref{eq:ly} that 
\begin{align}\label{Le}
 Y_m\to Y_\infty:=\summ \frac{\gd_k}{r(k)}E_k\le\infty.
\end{align}
Note that the sum in \eqref{le} is $\E Y_\infty$. Hence, if the sum is
finite, then $Y_\infty<\infty$ a.s.

Conversely, assume that \eqref{le} holds. Then the a.s.\
divergence of the sum in \eqref{Le}
follows by the Kolmogorov three series theorem
\cite[Theorem 6.5.5]{Gut}, or by \refL{LZ} in the case of large $r(m)$ and 
otherwise by \refL{LD}, which implies first $y_m/\gs_m\to\infty$ and then
$Y_m/y_m\pto1$. Furthermore, $M_{n-1}\asto\infty$, and thus
$Y_{M_{n-1}}\asto\infty$ as \ntoo; hence $P_n\asto0$ by \eqref{emmPn} and
$p_n\to0$ by \eqref{emmpn} and dominated convergence.
\end{proof}

\begin{theorem}\label{TE}
  Suppose that \eqref{le} holds and that $r(m)\to\infty$ as \mtoo.
Then there exists a sequence $a_n\to\infty$, such that on the interval 
$\cJ_n:=[n-a_n P_n\qw,n]$, the stochastic process $(I_k)_{k\in\cJ_n}$
\whp{} agrees with a sequence $(I'_k)_{k\in\cJ_n}$ of indicator variables that
conditioned on $P_n$ are \iid{} with $I'_k\xsim\Be(P_n)$.
\end{theorem}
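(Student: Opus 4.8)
The plan is to show that over the window $\cJ_n$ the acceptance threshold barely moves from its final value $\YYn$, so that the actual indicators $I_k=\indic{X_k>\YYx k}$, $k\in\cJ_n$, can be coupled with conditionally \iid{} $\Be(P_n)$ variables with an error tending to $0$. Throughout, write $P^*_k:=e^{-\YYx k}$ for the conditional acceptance probability faced by candidate $k$, so that $P^*_n=P_n$; since $\YYx k=Y_{M_{k-1}}$ is nondecreasing in $k$ (\refL{L2}), $P^*_k\ge P_n$ for every $k\le n$.

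First I would bound the total threshold drop over $\cJ_n$. Put $k_0:=\ceil{n-a_nP_n\qw}$ and let $D_n:=M_{n-1}-M_{k_0-1}$ be the number of candidates accepted in $\cJ_n$. By monotonicity of $\YYx{\cdot}$, for every $k\in\cJ_n$,
\[
0\le \YYn-\YYx k\le \YYn-\YYx{k_0}=\sum_{j=M_{k_0-1}}^{M_{n-1}-1}(Y_{j+1}-Y_j),
\]
which by \refL{L2} is a sum of $D_n$ terms, each at most $E_j/r(j)$ with $E_j\xsim\Exp(1)$ and $r(j)\ge r(M_{n-1}-D_n)$. To control $D_n$: each candidate in $\cJ_n$ is accepted with conditional probability at most $e^{-\YYx{k_0}}$, which on the event that the threshold has dropped by at most $1$ over $\cJ_n$ is $\le eP_n$; since $\abs{\cJ_n}\le a_nP_n\qw+1$, a Chernoff bound (legitimate since $a_n\to\infty$) gives $D_n=O(a_n)$ with high probability, and a backward stopping-time (bootstrap) argument — scanning from $n$ downward and stopping the first time the threshold has fallen by $1$ — removes the circularity in this estimate. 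Combining, with high probability
\[
\eps_n:=\sup_{k\in\cJ_n}\bigpar{\YYn-\YYx k}\le \frac{Ca_n\,\bar E_n}{r\bigpar{M_{n-1}-Ca_n}},
\]
where $\bar E_n$ is the mean of $O(a_n)$ independent $\Exp(1)$ variables, hence $O(1)$ with high probability.

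Next comes the choice of $a_n$. Since $M_n\asto\infty$ and $r$ is nondecreasing with $r(m)\to\infty$, we have $r(\floor{M_n/2})\pto\infty$, and a standard argument then produces a deterministic sequence $a_n\to\infty$ with $a_n^2/r(\floor{M_n/2})\pto0$; with this choice the last display gives $\eps_n\pto0$ and, crucially, $a_n\eps_n\pto0$. To couple, fix $\gd_n\to0$ with $a_n\gd_n\to0$ and $\P(\eps_n>\gd_n)\to0$ as above. On $\set{\eps_n\le\gd_n}$ we have $P_n\le P^*_k\le P_ne^{\gd_n}\le P_n(1+2\gd_n)$ for all $k\in\cJ_n$, so $\dtv\bigpar{\Be(P^*_k),\Be(P_n)}=P^*_k-P_n\le 2\gd_nP_n$. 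Building $(I'_k)_{k\in\cJ_n}$ step by step along the window, at each step using the maximal coupling of $\Be(P^*_k)$ with a $\Be(P_n)$ variable conditionally on the past (the measure-theoretic care needed because $\cJ_n$ depends on $P_n$ is discussed below), a union bound gives
\[
\P\bigpar{\exists\,k\in\cJ_n:\ I_k\ne I'_k}\le \P(\eps_n>\gd_n)+\abs{\cJ_n}\cdot 2\gd_nP_n = o(1)+O(a_n\gd_n)\to 0,
\]
while by construction the $I'_k$, conditioned on $P_n$, are \iid{} $\Be(P_n)$, which is exactly the claim.

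The main obstacle is carrying out the first step rigorously. The bound on the threshold drop is genuinely circular — the number $D_n$ of acceptances in $\cJ_n$ controls the drop, while the drop controls the per-step acceptance probabilities, hence $D_n$ — and this is compounded by the fact that the window $\cJ_n$ is itself random, since $P_n=e^{-Y_{M_{n-1}}}$ depends on all candidates up to time $n-1$, including those lying inside $\cJ_n$. Both difficulties are resolved by a backward stopping-time analysis started at time $n$; arranging it so that the coupled variables $(I'_k)$ genuinely carry the stated conditional law (rather than merely being close in distribution) is the delicate point, whereas the remaining ingredients — the Chernoff estimate for $D_n$, the elementary total-variation bound for Bernoulli laws, and the slow growth of $a_n$ — are routine.
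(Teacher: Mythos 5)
Your outline has the right general shape (a backward window, monotonicity of the thresholds, a per-step total-variation comparison plus a union bound, and a diagonal choice of $a_n$), but the step you yourself single out as ``the main obstacle'' is exactly where the proof must do its real work, and your sketch of it does not break the circularity. The bootstrap you propose --- scan backwards until the threshold has fallen by $1$, then apply a Chernoff bound with per-step success probability $eP_n$ to a window of length $a_nP_n\qw$ --- still quantifies everything in terms of $P_n=e^{-Y_{M_{n-1}}}$, which is measurable only at the \emph{right} endpoint of the window: both the success-probability bound and the window length depend on the future of every candidate $k\in\cJ_n$, so there is no filtration in which the claimed conditional Bernoulli domination (hence the Chernoff bound) is valid as stated. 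The same problem recurs in the coupling step: ``maximal coupling of $\Be(P^*_k)$ with a $\Be(P_n)$ variable conditionally on the past'' is not a well-defined sequential construction, since $P_n$ is unknown at time $k<n$; and this is precisely what must be settled to get the conclusion of \refT{TE}, namely that $(I'_k)$, conditioned on $P_n$, are genuinely \iid{} $\Be(P_n)$ --- you flag it as ``the delicate point'' but give no argument. A smaller issue: in the bound $\eps_n\le Ca_n\bar E_n/r(M_{n-1}-Ca_n)$ the claim $\bar E_n=\Op(1)$ also needs justification, because which exponential increments land in the window is determined by the process itself.

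For comparison, the paper breaks the circularity by going backwards not by threshold drop but by a fixed number $K$ of acceptances: with $\xtau_n:=\min\set{k: M_{k-1}\ge M_{n-1}-K}$, monotonicity gives $P_n\le P^*_k\le P_{\xtau_n}$ for all $k\ge \xtau_n$, so the comparison parameter $P_{\xtau_n}$ sits at the \emph{left} end of the window; the event that the window $[n-aP_{\xtau_n}\qw,n]$ reaches past $\xtau_n$ forces at least $K$ acceptances among at most $\floor{aP_{\xtau_n}\qw}$ candidates each accepted with conditional probability at most $P_{\xtau_n}$, and Markov's inequality bounds this by $a/K$ \emph{uniformly} in $P_{\xtau_n}$, so it is killed by taking $K$ large relative to $a$. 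Since $r(M_{n-1}-K)\to\infty$, one has $P_{\xtau_n}\sim P_n$ a.s., so the accumulated coupling error $\approx aP_{\xtau_n}\qw\bigpar{P_{\xtau_n}-P_n}\asto0$ for each fixed $a$; the coupling is made first against $\Be(P_{\xtau_n})$ and only then adjusted to $\Be(P_n)$ (which is how the stated conditional law is obtained), and the fixed $a$ is upgraded to a sequence $a_n\to\infty$ only at the very end, via the elementary diagonal lemma. Your alternative of choosing $a_n$ directly from $r(\floor{M_n/2})\pto\infty$ is unobjectionable in itself (such a deterministic sequence exists by the same sort of lemma), but without a correct replacement for the domination and coupling steps the proposal does not amount to a proof.
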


\begin{proof}
%, let $n_1:=\ceil{n-aP_n\qw}$ and  $\cJ_n:=[n_1,n]$. 
Fix an integer $K>0$, and define the stopping time
$\xtau_n:=\min \set{k: M_{k-1}\ge M_{n-1}-K}$.
Thus $\xtau_n\le n$ and, assuming $n$ is so large that $M_{n-1}\ge K$, 
\begin{align}
\YYx{\xtau_n}=
  Y_{M_{n-1}-K}\ge Y_{M_{n-1}} - \frac{K}{r(M_{n-1}-K)} = Y_{M_{n-1}}+o(1)
= \YYn+o(1)
\end{align}
\as{} as \ntoo, since $M_n\to\infty$ and thus $r(M_{n-1}-K)\to\infty $ a.s.
Consequently, \as{} as \ntoo, 
\begin{align}\label{PoP}
  P_{\xtau_n} \sim P_n.
\end{align}

By definition, $K$ candidates are accepted in the interval $\cJx:=[\xtau_n,n)$
(provided $\xtau_n>1$), and, conditioned on $P_{\xtau_n}$,
each candidate in $\cJx$ is accepted with probability at most $P_{\xtau_n}$.

Let $a$ be a fixed large number and define $n_1:=\ceil{n- a P_{\xtau_n}\qw}$.
If $\xtau_n\ge n_1$, then 
$|\cJx|\le n-n_1\le a P_{\xtau_n}\qw$, and thus at least $K$
candidates are accepted in the interval $[\xtau_n,\xtau_n+\floor{a P_{\xtau_n}\qw})$.
Hence, using Markov's inequality,
\begin{align}
  \P\bigpar{\xtau_n\ge n_1 \mid P_{\xtau_n}=p}
\le \P\bigpar{\Bin\bigpar{\floor{a p\qw}, p}\ge K}
\le a/K.
\end{align}
Consequently, given any $\eps>0$, we may by choosing $K>a/\eps$ make this
probability $<\eps$, uniformly in $p>0$. 
Hence, we may in the rest of the proof assume that
$\xtau_n<n_1$. This means that
for every $k$ in the
interval $[n_1,n]$,
$\xtau_n<k\le n$, and thus $P_{\xtau_n} \ge P_k\ge P_n$.
It follows that, conditioned on $P_{\xtau_n}$, we may couple the Markov process
$(I_k)_{k\in[n_1,n]}$ with a sequence of (conditionally) \iid{} variables 
$(I''_k)_{k\in[n_1,n]}$ with $\P(I''_k)=P_{\xtau_n}$, with an error probability
at most, using \eqref{PoP},
\begin{align}\label{winston}
  (n-n_1+1) \bigpar{P_{\xtau_n}-P_n} \sim a P_{\xtau_n}\qw\bigpar{P_{\xtau_n}-P_n}
\asto0.
\end{align}
We now uncondition, and see (using \eqref{winston} and dominated
convergence)
that we may couple 
$(I_k)_{k\in[n_1,n]}$
and
$(I''_k)_{k\in[n_1,n]}$ with error probability $o(1)$.
We may then instead couple with 
$(I_k')_{k\in[n_1,n]}$ where $I_k'$ are conditionally \iid{} with 
$\P(I'_k)=P_{n}$, introducing an additional error $o(1)$ by 
an estimate similar to \eqref{winston}.

Furthermore, \as{} $P_{\xtau_n}\sim P_n$ by \eqref{PoP}, and thus 
$aP_{\xtau_n}\qw > (a-1)P_n\qw$ for large $n$.
Thus, we may as well couple $(I_k)$ and $(I_k')$ on $[n-(a-1)P_n\qw,n]$,
for any fixed $a$.

We may here replace $a$ by $a+1$. Moreover, by a simple general argument,
since this coupling with error probability $o(1)$ is possible for every
fixed $a>0$, it is also possible for some sequence $a_n\to\infty$; this
follows by the following elementary lemma, taking $x(a,n)$ to be the total
variation distance between the two sequences, which completes the proof.
\end{proof}

\begin{lemma}
  Suppose that $x(a,n)$, $a,n\in \bbN$, 
are real numbers such that for every fixed $a$,
  $x(a,n)\to0$ as \ntoo. Then there exists a sequence $a_n\to\infty$ such
  that $x(a_n,n)\to0$.
\end{lemma}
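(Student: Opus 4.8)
The plan is a standard threshold (diagonalisation) argument. For each fixed integer $j\ge1$, the hypothesis says $x(j,n)\to0$ as \ntoo, so there is an index $N_j$ such that $|x(j,n)|<1/j$ for all $n\ge N_j$. First I would arrange these thresholds to be strictly increasing: replacing $N_j$ by $\max(N_j,N_{j-1}+1)$ inductively, we may assume $N_1<N_2<\cdots$ (this preserves the property $|x(j,n)|<1/j$ for $n\ge N_j$).

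Next, define $a_n:=\max\set{j\ge1:N_j\le n}$ for $n\ge N_1$, and $a_n:=1$ for $n<N_1$. Since the $N_j$ form a strictly increasing sequence of integers, for each $j$ we have $a_n\ge j$ once $n\ge N_j$; hence $a_n\to\infty$ as \ntoo. Moreover, if $N_j\le n<N_{j+1}$, then $a_n=j$ and $n\ge N_j$, so $|x(a_n,n)|=|x(j,n)|<1/j$; letting \ntoo{} forces $j\to\infty$, so $x(a_n,n)\to0$.

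There is essentially no obstacle here; the only point requiring a moment's care is making the thresholds $N_j$ strictly increasing, which is what guarantees both that $a_n$ is well defined and that $a_n\to\infty$. Everything else is immediate from the definition of $a_n$.
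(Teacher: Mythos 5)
Your proof is correct and follows essentially the same diagonalisation as the paper: choose increasing thresholds $N_j$ with $|x(j,n)|<1/j$ for $n\ge N_j$ and set $a_n=j$ on $[N_j,N_{j+1})$. No issues.
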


\begin{proof}
  Let $n_0=1$. For every $k\ge1$, choose $n_k>n_{k-1}$ 
such that $|x(k,n)|<1/k$ when $n\ge n_k$. Define $a_n=k$ when $n_k \le
n<n_{k+1}$. 
\end{proof}

Let $L_n:=n-N_{M_n}$ be the number of candidates examined after the last
accepted one.
Let $\dtv(X,Y)$ denote the total variation distance between two
distributions or random variables.

\begin{corollary}\label{CE}
    Suppose that \eqref{le} holds and that $r(m)\to\infty$ as \mtoo.
Then, conditioned on $P_n$, $\dtv\bigpar{L_n,\Ge(P_n)}\to0$ as \ntoo.
Consequently, still conditioned on $P_n$,
\begin{equation}
  P_n L_n \dto \Exp(1).
\end{equation}
\end{corollary}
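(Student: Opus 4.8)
The plan is to derive this from \refT{TE}. The key observation is that $L_n=n-N_{M_n}$ is the \emph{backward recurrence time} of the indicator sequence at $n$: reading $I_n,I_{n-1},\dots$ backwards, $L_n$ is the number of $0$'s before the first $1$. Now \refT{TE} produces a sequence $a_n\to\infty$ and, on the window $\cJ_n=[n-a_nP_n\qw,n]$, a coupling of $(I_k)_{k\in\cJ_n}$ with a family $(I'_k)_{k\in\cJ_n}$ which, conditioned on $P_n$, is \iid{} $\Be(P_n)$, and such that the event $A_n:=\set{I_k=I'_k\text{ for all }k\in\cJ_n}$ has $\P(A_n)\to1$. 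For the iid family, the backward recurrence time $L'_n:=n-\max\set{k\in\cJ_n:I'_k=1}$ (set $L'_n:=\infty$ if no index in $\cJ_n$ is accepted) satisfies, conditionally on $P_n$, $\P(L'_n=j\mid P_n)=(1-P_n)^jP_n$ for $0\le j<\abs{\cJ_n}$ and $\P(L'_n=\infty\mid P_n)=(1-P_n)^{\abs{\cJ_n}}\le e^{-P_n\abs{\cJ_n}}\le e^{-a_n}$, the last step because $\cJ_n$ contains at least $a_nP_n\qw$ integers (its right endpoint $n$ being an integer).

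First I would note that $L_n$ is always well defined: the very first candidate is accepted (since $r(0)=1$), so $M_n\ge1$; and I would record that $P_n\asto0$ as \ntoo{} by \refL{LE}. Next, on $A_n$ the set of accepted indices in $\cJ_n$ is the same for $(I_k)$ and $(I'_k)$, and if this set is nonempty then the last accepted index $\le n$ lies in $\cJ_n$, whence $L_n=L'_n$. So $L_n\neq L'_n$ forces $A_n^c$ or $L'_n=\infty$, and therefore, conditionally on $P_n$,
\begin{equation*}
  \dtv\bigpar{L_n,\Ge(P_n)\mid P_n}\le\P(A_n^c\mid P_n)+e^{-a_n}+\dtv\bigpar{L'_n,\Ge(P_n)\mid P_n}.
\end{equation*}
The last term I would handle by a one-line computation: disregarding the truncation, $L'_n$ is a geometric variable started at $0$, which differs in total variation from $\Ge(P_n)$ (started at $1$) by exactly $P_n$ (the unit shift), and the truncation costs a further $(1-P_n)^{\abs{\cJ_n}}\le e^{-a_n}$. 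Hence the right-hand side above is at most $\P(A_n^c\mid P_n)+2e^{-a_n}+P_n$, whose expectation equals $\P(A_n^c)+2e^{-a_n}+\E P_n\to0$; so $\dtv(L_n,\Ge(P_n)\mid P_n)\to0$ in $L^1$ and hence in probability, which is the first assertion.

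For the second assertion, multiplication by the ($P_n$-measurable) factor $P_n$ preserves conditional total variation, so it suffices to observe that, conditionally on $P_n$, $P_nG_n$ with $G_n\xsim\Ge(P_n)$ converges in distribution to $\Exp(1)$: for each fixed $x>0$, $\P\bigpar{P_nG_n>x\mid P_n}=(1-P_n)^{\floor{x/P_n}}\asto e^{-x}$ since $P_n\asto0$. Combining this with the total-variation bound and unconditioning by dominated convergence yields $\P(P_nL_n>x)\to e^{-x}$, \ie{} $P_nL_n\dto\Exp(1)$.

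Granted \refT{TE}, this is mostly bookkeeping. The two points that need care are: (i) that $L_n$ is determined by the indicators in the window $\cJ_n$ only when the last acceptance actually falls inside $\cJ_n$, so one must bound the probability that it does not, which is where $a_n\to\infty$ enters; and (ii) the off-by-one shift between the backward recurrence time (supported on $\set{0,1,2,\dots}$) and $\Ge(P_n)$ (supported on $\set{1,2,\dots}$), which is negligible precisely because $P_n\asto0$ by \refL{LE}. I expect (i) to be the main thing to get exactly right.
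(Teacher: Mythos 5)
Your proposal is correct and is essentially the paper's intended argument: the paper's proof of Corollary~\ref{CE} is the one-liner ``immediate from Theorem~\ref{TE} and $P_n\asto0$ (Lemma~\ref{LE})'', and your write-up fills in exactly those details (identifying $L_n$ as the backward recurrence time inside the window $\cJ_n$, bounding the probability of no acceptance in the window by $e^{-a_n}$, and using $P_n\asto0$ both for the unit shift between the geometric supported on $\{0,1,\dots\}$ and $\Ge(P_n)$ and for the geometric-to-exponential limit). The two points you flag as needing care are handled correctly, so nothing is missing.
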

\begin{proof}
  Immediate from \refT{TE} and the fact that $P_n\asto0$ by Lemma \ref{LE}.
\end{proof}

We can also find the unconditional distribution of $L_n$.
For convenience, and in order to obtain more explicit results, we consider
only the case in \refS{Slinear}, and we assume $\ga<1$, which implies
\eqref{le}.
We first study $P_n$.

\begin{theorem}\label{TPn}
Suppose that \eqref{sofie} holds for some $\ga\in(0,1)$. 
Then, a.s.,
\begin{equation}
  \label{tpn}
  P_n\sim \ga M_n/n \sim \ga W n^{\ga-1},
\end{equation}
where $W$ is as in \refT{TI1}.
\end{theorem}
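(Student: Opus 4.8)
The starting point is the exact identity $P_n=e^{-\YYn}=e^{-Y_{M_{n-1}}}$ from \eqref{yyn}--\eqref{emmPn}. Thus the whole statement reduces to understanding the a.s.\ behaviour of $e^{-Y_m}$ as $m\to\infty$, evaluated at the random index $m=M_{n-1}$; substituting this index is legitimate because $M_{n-1}\asto\infty$ and all the asymptotics below hold a.s.\ for the full deterministic-index sequence.

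First I would establish the clean a.s.\ asymptotic
\[
  e^{-Y_m}\sim \ga\,\frac{m}{\gl_m}\,e^{-Z},\qquad m\to\infty,
\]
with $\gl_m$ as in \eqref{gl}. This combines three facts already proved under \eqref{sofie}: the a.s.\ expansion $Y_m=y_m+Z+o(1)$ (\refL{LZ}); the estimate $y_m=(\ga\qw-1)\log m+(\ga\qw-1)\gam-\log\ga+\rho+1+o(1)$ (\refL{LA}), which gives $e^{-y_m}\sim\ga\,e^{-(\ga\qw-1)\gam-\rho-1}m^{1-\ga\qw}$; and $\gl_m\sim e^{(\ga\qw-1)\gam+\rho+1}m^{1/\ga}$ (\refL{LA2}), so that $m/\gl_m\sim e^{-(\ga\qw-1)\gam-\rho-1}m^{1-\ga\qw}$. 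Comparing, the constant $e^{(\ga\qw-1)\gam+\rho+1}$ in $\gl_m$ is precisely the reciprocal of the constant coming out of $e^{-y_m}$, so $e^{-y_m}\sim\ga\,(m/\gl_m)$, and multiplying by $e^{-Z+o(1)}$ gives the displayed asymptotic.

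Next I would substitute $m=M_{n-1}$ and invoke \refT{TM1} applied with $n-1$ in place of $n$, namely $\gl_{M_{n-1}}\sim (n-1)e^{-Z}\sim n e^{-Z}$ a.s.\ (here $Z$ is the same random variable as in $e^{-Y_{M_{n-1}}}$, so the $e^{-Z}$ factors cancel). This yields $P_n=e^{-Y_{M_{n-1}}}\sim \ga\,M_{n-1}/(n e^{-Z})\cdot e^{-Z}=\ga M_{n-1}/n$. Since $M_{n-1}\le M_n\le M_{n-1}+1$ and $M_n\asto\infty$, we have $M_{n-1}\sim M_n$, so $P_n\sim\ga M_n/n$, which is the first relation in \eqref{tpn}. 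Finally $M_n/n=(M_n/n^\ga)\,n^{\ga-1}\asto W n^{\ga-1}$ by \refL{LB} (equivalently \refT{TI1}), giving $P_n\sim\ga W n^{\ga-1}$.

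The calculations are routine once the results of \refSs{Slarge}--\ref{Slinear} are in hand; the only point needing a little care is the substitution of the random, increasing index $m=M_{n-1}$ into the deterministic-index asymptotics, which is harmless because each convergence used is a.s.\ and $M_{n-1}\to\infty$ a.s. One could alternatively run the argument directly from \refL{LA} and \refL{LB}, bypassing $\gl_m$, at the cost of an explicit check that the exponential constants involving $\gam$ and $\rho$ cancel; routing through $\gl_m$ and \refT{TM1} makes that cancellation automatic, which is why I would prefer it.
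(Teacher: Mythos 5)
Your proposal is correct and follows essentially the same route as the paper: both start from $P_n=e^{-Y_{M_{n-1}}}$ via \eqref{emmPn}, expand $Y_m=y_m+Z+o(1)$ a.s.\ by \refL{LZ} and \refL{LA}, and then substitute the random index $m=M_{n-1}\asto\infty$ together with the a.s.\ law for $M_n$. The only (harmless) difference is bookkeeping: you cancel the constants involving $\gam$ and $\rho$ by routing through $\gl_m$ (\refL{LA2}) and \refT{TM1}, whereas the paper substitutes the explicit formula \eqref{W} for $W$ in terms of $\xi$ and $Z$ and cancels directly.
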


\begin{proof}
  Let, for convenience $\xi:=\bigpar{\ga\qw-1}\gam+\rho+1$, so 
$W=e^{-\ga\xi-\ga Z}$ by \eqref{W}.
Then, by \refL{LZ} and \eqref{la}, a.s.,
\begin{equation}
  Y_m=y_m+Z+o(1)
= \bigpar{\ga\qw-1}\log m -\log\ga + \xi + Z + o(1)
\end{equation}
and thus, by \eqref{W},
\begin{align}
  \YYn 
&=\bigpar{\ga\qw-1}\log M_n -\log\ga +\xi+Z+o(1)
\notag\\&
=(1-\ga)\log n +\bigpar{\ga\qw-1}\log W -\log\ga +\xi+Z+o(1)
\notag\\&
=(1-\ga)\log n -\log\ga +\ga\xi+\ga Z+o(1). \label{yyn2}
\end{align}
Hence, by \eqref{emmPn},
\begin{align}
  P_n\sim \ga e^{-\ga\xi-\ga Z} n^{\ga-1}
= \ga W n^{\ga-1}.
\end{align}
\end{proof}

\begin{theorem}
Suppose that \eqref{sofie} holds for some $\ga\in(0,1)$. 
Then
\begin{align}
  L_n/n^{1-\ga} \dto \LL:=\ga\qw W\qw E,
\end{align}
where $W$ is as in \refT{TI1} and $E\xsim\Exp(1)$ is independent of $W$.
\end{theorem}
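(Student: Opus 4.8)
The plan is to combine three facts already in place: the coupling of the look‑back length $L_n$ with a genuine geometric variable (\refC{CE}), the almost–sure asymptotics $P_n\sim\ga Wn^{\ga-1}$ (\refT{TPn}), and the elementary limit that a $\Ge(p)$ variable multiplied by $p$ converges to $\Exp(1)$ as $p\downto0$. To begin, observe that since $\ga<1$ the condition \eqref{le} holds (it follows from \refL{LA} and \eqref{eq:yk}, equivalently it is the remark preceding \refT{TPn}), so \refL{LE}, \refC{CE} and \refT{TPn} all apply; in particular $P_n\asto0$, while \refT{TPn} gives $\eta_n:=P_nn^{1-\ga}\asto\ga W$, with $\ga W\in(0,\infty)$ a.s.

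First I would use \refC{CE} to couple $L_n$, for each $n$, with a variable $G_n$ which conditionally on $P_n$ has distribution $\Ge(P_n)$, in such a way that $\P(L_n\neq G_n)\to0$; this reduces the statement to showing $G_n/n^{1-\ga}\dto\LL$. Since $\eta_n$ is a deterministic function of $P_n$, I would then condition on $P_n$ and write, for $x>0$, $\P\bigpar{G_n/n^{1-\ga}>x\mid P_n}=\P\bigpar{P_nG_n>x\eta_n\mid P_n}=(1-P_n)^{\floor{x\eta_n/P_n}}$, using $\P(\Ge(p)>s)=(1-p)^{\floor{s}}$ for $s\ge0$. A one‑line estimate using $P_n\asto0$ and $\eta_n\asto\ga W$ gives $\floor{x\eta_n/P_n}\log(1-P_n)\asto-x\ga W$, so this conditional survival probability tends a.s.\ to $e^{-x\ga W}$. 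Taking expectations and applying dominated convergence (the conditional probabilities lie in $[0,1]$) yields $\P(G_n/n^{1-\ga}>x)\to\E e^{-x\ga W}=\P(\LL>x)$, the last equality by conditioning on $W$: given $W$ one has $\LL=\ga\qw W\qw E\xsim\Exp(\ga W)$ with $E\xsim\Exp(1)$ independent of $W$. The case $x\le0$ is trivial since $G_n\ge1$ and $\LL>0$ a.s., and as $\LL$ has a continuous distribution function this pointwise convergence of survival functions gives $G_n/n^{1-\ga}\dto\LL$; then $L_n/n^{1-\ga}\dto\LL$ follows from the coupling.

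The only point requiring care is the step where \refC{CE} is turned into an honest coupling: \refC{CE} is stated conditionally on $P_n$, so one has to note that the (random) total variation distance $\dtv\bigpar{\cL(L_n\mid P_n),\Ge(P_n)}$ is bounded by $1$ and tends to $0$ in probability, whence by bounded convergence its expectation — the error probability $\P(L_n\neq G_n)$ of the optimal coupling — tends to $0$. Once this is granted the remaining steps are routine: the scaled‑geometric limit and the interchange of limit and expectation are both elementary. The input that really does the work is thus \refC{CE} (which in turn rests on \refT{TE}), together with the precise first‑order asymptotics of $P_n$ from \refT{TPn}.
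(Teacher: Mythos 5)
Your proposal is correct and follows essentially the same route as the paper, whose proof of this theorem is simply the observation that it is a consequence of \refC{CE} and \refT{TPn}; you supply the routine details (the coupling from the conditional total-variation convergence, the scaled-geometric limit, and dominated convergence) that the paper leaves implicit. Nothing further is needed.
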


\begin{proof}
  A consequence of \refC{CE} and \refT{TPn}.
\end{proof}

\begin{remark}
  The moments 
  \begin{equation}\label{kd}
    \E \LL^s = \ga^{-s}\E \bigsqpar{W^{-s}} \E \bigsqpar{E^s}
= {\ga^{-s}}\gG(s+1)\E W^{-s}, 
\qquad s>-1,
  \end{equation}
follow from \eqref{ti1}. Note that 
for real $s$, $\E\LL^s<\infty$ if $-1<s<\rx/\ga$, but
not outside this interval, see  \refR{RZ1}.
\end{remark}

\begin{example}
  For \hmedian, $W$ has the density function \eqref{fWmed}, and thus 
$\LL=2 W\qw E$ has the density function
\begin{align}
  f_{\LL}(x)&=\intoo f_W(y) f_{2y\qw E}(x) \dd y 
=\intoo \frac{y}2 e^{-y^2/4} 
\frac{y}2  e^{-yx/2} \dd y
\nonumber\\&
=2\intoo t^2 e^{-t^2-tx} \dd t,
\end{align}
as found by \cite[Theorem 3]{HelmiP-median2013}.
Furthermore, \eqref{kd} and \eqref{EWmed} yield
\begin{align}
  \E \LL^s =\gG(s+1) \gG\Bigpar{1-\frac{s}{2}},
\qquad -1<s\le 2.
\end{align}
In particular,  $\E\LL=\sqrt\pi$. 
\cite{HelmiP-median2013} proved convergence of $\E L_n/n\qq$ to this limit.
\end{example}

\begin{remark}
  \refT{TE} implies also the same limit results for, e.g., the distance
  between the last two accepted.
See \cite[Theorem 4]{HelmiP-median2013} for \hmedian.
\end{remark}

\section{The distribution of accepted values}\label{Saccepted}

Finally, we study the distribution of the accepted values. For simplicity we
consider again only the situation in \refS{Slinear}. We also assume for
simplicity that $\ga<1$, leaving the case $\ga=1$ to the reader.

Let, for a real number $x$,
$\Mle{x}_n$ 
be the number of values $X_k$ with $k\le n$
that are accepted and furthermore satisfy $X_k\le x$.
Define $\Mgt{x}_n=M_n-\Mle{x}_n$ similarly.

\begin{theorem}
  \label{Tle}
Suppose that \eqref{sofie} holds for some $\ga\in(0,1)$. 
Then, a.s., for every $u\in \bbR$,  
\begin{align}\label{tle}
  \frac{\Mle{\YYn+u}_n}{M_n}\to F(u):=
  \begin{cases}
(1-\ga) e^{\ga u/(1-\ga)},  & u\le 0,
\\ 
1-\ga e^{-u}, & u\ge0.
  \end{cases}
\end{align}
In other words, the empirical distribution function of the differences
$X_k-\YYn$ 
for the $M_n$ accepted candidates converges \as{} to the distribution with
distribution function $F(u)$.
Hence, if $\bX_n$ is the value of one of the $M_n$ accepted
candidates, chosen uniformly at random, then, 
%also conditioned on the accepted values, a.s.,
\begin{align}
  \bX_n-\YYn \dto V,
\end{align}
where $V$ has the distribution $F(u)$.
\end{theorem}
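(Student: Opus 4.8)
The plan is to describe the multiset of accepted values at the moment $M_n$ of the first $n$ candidates have been hired, split into an \emph{active} part lying just above the current threshold and an \emph{inactive} part below it, and then read off the empirical distribution of the offsets $X_k-\YYn$. Recall from \refL{L1} that $Y_m$ is the $r(m)$-th largest accepted value; call an accepted value \emph{active at time $m$} if it is among these top $r(m)$, so exactly $r(m)$ are active and $Y_m$ is the least of them. By \refL{L1}\,(ii), a value leaves the active set precisely at a step $m\to m+1$ with $\gd_{m+1}=1$, and the value that leaves is the old threshold $Y_m$; hence the inactive accepted values at time $M_n$ are exactly $\set{Y_k:1\le k\le M_n-1,\ \gd_{k+1}=1}$, of which there are $\sum_{j=2}^{M_n}\gd_j=M_n-r(M_n)+O(1)\sim(1-\ga)M_n$ by \eqref{gd} and \eqref{sofie2}. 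For the active part, the Claim in the proof of \refL{L2} says that, conditioned on the revealed history at the time of the $m$-th acceptance, the $r(m)-1$ accepted values strictly above $Y_m$ are i.i.d.\ with the distribution of $Y_m+E$; since this conditional law does not depend on that history, for each fixed $m$ the offsets above the threshold are simply $r(m)-1$ i.i.d.\ $\Exp(1)$ variables, say $G^{(m)}_1,\dots,G^{(m)}_{r(m)-1}$. Finally $\YYn=Y_{M_{n-1}}=Y_{M_n}+o(1)$ a.s.\ (by \refL{LZ}, since $y_m-y_{m-1}=\gd_m/r(m)\to0$ and $M_{n-1},M_n\to\infty$), and as $F$ is continuous we may replace $\YYn$ by $Y_{M_n}$ everywhere below.

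\emph{Case $u\ge0$.} Here every accepted value exceeding $Y_{M_n}+u$ is active, so, up to an $O(1)$ boundary term, $\Mgt{Y_{M_n}+u}_n=\#\set{i:G^{(M_n)}_i>u}$. For each fixed $u$ and fixed $m$, $\#\set{i:G^{(m)}_i>u}$ is $\Bin\bigpar{r(m)-1,e^{-u}}$, so Hoeffding's inequality gives $\P\bigpar{\abs{(r(m)-1)\qw\#\set{i:G^{(m)}_i>u}-e^{-u}}>\eps}\le 2e^{-2(r(m)-1)\eps^2}$; since $r(m)\sim\ga m$ by \eqref{sofie2} this is summable in $m$, so Borel--Cantelli yields $(r(m)-1)\qw\#\set{i:G^{(m)}_i>u}\asto e^{-u}$ as \mtoo. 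Intersecting over rational $u$ and using monotonicity, this holds a.s.\ for all $u$ simultaneously; plugging in the (a.s.\ finite, a.s.\ $\to\infty$) index $m=M_n$ and using $r(M_n)\sim\ga M_n$ gives $\Mgt{Y_{M_n}+u}_n/M_n\asto\ga e^{-u}$, hence $\Mle{\YYn+u}_n/M_n\asto1-\ga e^{-u}=F(u)$.

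\emph{Case $u<0$.} Now every accepted value $\le Y_{M_n}+u<Y_{M_n}$ is inactive, so $\Mle{Y_{M_n}+u}_n=\#\set{k\le M_n-1:\gd_{k+1}=1,\ Y_k\le Y_{M_n}+u}$, and since $(Y_k)_k$ is nondecreasing this equals $\#\set{k\le k_n:\gd_{k+1}=1}=k_n-r(k_n)+O(1)$, where $k_n:=\max\set{k\le M_n-1:Y_k\le Y_{M_n}+u}$. It remains to show $k_n\sim M_n e^{\ga u/(1-\ga)}$ a.s. Fix $\gd\in\bigpar{0,e^{\ga u/(1-\ga)}}$. By \refL{LZ}, $Y_k-y_k-Z\asto0$ as \ktoo, hence a.s.\ $\sup_{k\ge m}\abs{Y_k-y_k-Z}\to0$ as \mtoo; applying this with $m=\ceil{\gd M_n}$ and combining with \refL{LA} gives, uniformly over $\ceil{\gd M_n}\le k\le M_n$, $Y_{M_n}-Y_k=(\ga\qw-1)\log(M_n/k)+o(1)$. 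The choice of $\gd$ makes $Y_{\ceil{\gd M_n}}<Y_{M_n}+u$ eventually, so $k_n\ge\ceil{\gd M_n}$; then the inclusion $Y_{k_n}\le Y_{M_n}+u<Y_{k_n+1}$ together with $\log\bigpar{(k_n+1)/k_n}\to0$ forces $(\ga\qw-1)\log(M_n/k_n)=-u+o(1)$, i.e.\ $k_n=M_n e^{\ga u/(1-\ga)+o(1)}$. Hence $\Mle{\YYn+u}_n\sim(1-\ga)k_n\sim(1-\ga)M_n e^{\ga u/(1-\ga)}=F(u)M_n$.

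This proves \eqref{tle} for each fixed $u$; taking the intersection over rational $u$ and using monotonicity of $u\mapsto\Mle{\YYn+u}_n/M_n$ together with continuity of $F$, it holds a.s.\ simultaneously for all $u$, which is the stated convergence of empirical distribution functions. The last assertion then follows at once: conditionally on the hiring history, $\bX_n-\YYn$ has distribution function $\hat F_n(u):=\Mle{\YYn+u}_n/M_n$, and $\hat F_n(u)\to F(u)$ a.s.\ for every $u$, so $\P\bigpar{\bX_n-\YYn\le u}=\E\hat F_n(u)\to F(u)$ by bounded convergence, i.e.\ $\bX_n-\YYn\dto V$. The main work is the structural decomposition in the first paragraph — recognizing that the accepted values split into the $\sim(1-\ga)M_n$ retired thresholds $Y_k$, governed via \refLs{LZ} and \ref{LA} by the deterministic profile $y_k$, and the $\sim\ga M_n$ active offsets sitting i.i.d.\ $\Exp(1)$ above the current threshold; once this is in hand, the case $u\ge0$ is a law of large numbers for i.i.d.\ variables and the case $u<0$ reduces, via the uniform bound $\sup_{k\ge\ceil{\gd M_n}}\abs{Y_k-y_k-Z}\asto0$, to counting indices.
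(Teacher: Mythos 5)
Your proof is correct, but it takes a genuinely different route from the paper. The paper introduces the auxiliary non-rank-based strategies $\Hz$ with deterministic thresholds $x_n(z)$, proves the analogue of \eqref{tle} for them by Chernoff bounds for sums of independent indicators (\refL{LHz}), and then sandwiches the rank-based process between $\Hz$ and $\cH(z')$ for rational $z<Z+\xi<z'$ using the a.s.\ expansion \eqref{yyn2} of $\YYn$, letting $z,z'\to Z+\xi$. You instead work directly with the structure of the accepted multiset: the top $r(M_n)$ values (whose offsets above the threshold are, by the Claim in the proof of \refL{L2} and lack of memory, i.i.d.\ $\Exp(1)$, independent of the revealed history) account for the branch $1-\ga e^{-u}$ via a binomial law of large numbers, while the remaining $\sim(1-\ga)M_n$ accepted values are exactly the retired thresholds $Y_k$ with $\gd_{k+1}=1$, counted through the a.s.\ expansions of \refLs{LZ} and \ref{LA}; monotonicity plus continuity of $F$ then gives simultaneity in $u$ and allows the harmless shift from $Y_{M_n}$ to $\YYn$. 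Two points worth making explicit, though your $O(1)$'s essentially cover them: the count at examination time $n$ coincides with the count at the $M_n$-th acceptance epoch, which is what legitimizes plugging the random index $M_n$ into the a.s.\ limit taken along fixed $m$ (and first Borel--Cantelli needs only the marginal $\Bin(r(m)-1,e^{-u})$ law, no independence in $m$); and $r(k)=k+1$ holds only finitely often when $\ga<1$, so the ``retired thresholds'' equal to $0$ contribute $O(1)$. Your decomposition has the merit of explaining the two branches of $F$ probabilistically (exponential tail from the active block, the $u\le0$ part from the history of thresholds) and avoiding the auxiliary strategies; the paper's comparison argument is somewhat more mechanical but reuses a single lemma for both bounds and all $u$ at once.
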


The proof is given later.
Note that $V$ has density
\begin{align}
  f(u):=F'(u)=
  \begin{cases}
\ga e^{\ga u/(1-\ga)},  & u<0,
\\ 
\ga e^{-u}, & u>0.
  \end{cases}
\end{align}
Thus, $V$ has an asymmetric double exponential distribution (Laplace
distribution); if $\ga=1/2$, $V$ has the usual Laplace distribution.

In order to prove \refT{Tle}, we introduce a simpler strategy.
Fix %(besides $\ga$)
a real number $z$ and define
\begin{align}\label{xn}
  x_n=x_n(z):=(1-\ga)\log n + \ga z -\log \ga
\end{align}
and, for later convenience,
\begin{align}\label{wz}
  w=w(z):=e^{-\ga z}\in\ooo.
\end{align}
Define the strategy $\Hz$ as 'accept if $X_n>x_n$'. (Thus $\Hz$ is
not a rank-based strategy.)

\begin{lemma}
  \label{LHz}
For the strategy $\Hz$, a.s., 
\begin{align}
  M_n\sim w n^\ga \label{lhz1}
\end{align}
and, for every real $u$,
\begin{align}\label{lhz2}
  \frac{\Mle{x_n+u}_n}{M_n}\to F(u):=
  \begin{cases}
(1-\ga) e^{\ga u/(1-\ga)},  & u\le 0,
\\ 
1-\ga e^{-u}, & u\ge0.
  \end{cases}
\end{align}
\end{lemma}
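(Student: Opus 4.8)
The plan is to exploit that for the strategy $\Hz$ the acceptance decisions are \emph{independent}: candidate $k$ is accepted iff $I_k:=\indic{X_k>x_k}=1$, and since $X_k\xsim\Exp(1)$ these are independent $\Be(p_k)$ variables with
\begin{equation*}
  p_k=e^{-x_k}=\min\bigset{1,\ga w k^{\ga-1}},
\end{equation*}
where I used that $x_n=(1-\ga)\log n+\ga z-\log\ga$ and $w=e^{-\ga z}$ give $e^{-x_n}=\ga w n^{\ga-1}$. Only finitely many $k$ (those with $x_k\le0$) have $p_k=1$, and they contribute $O(1)$ to all counts below, so I shall ignore them. For \eqref{lhz1}: since $0<\ga<1$ we have $\sum_{k=1}^n k^{\ga-1}\sim n^\ga/\ga$, hence $\E M_n=\sum_{k=1}^n p_k=w n^\ga+O(1)\sim wn^\ga$; moreover $\Var M_n\le\E M_n$ and $\sum_k\Var(I_k)/k^{2\ga}\le\sum_k p_k/k^{2\ga}\le C+\ga w\sum_k k^{-1-\ga}<\infty$, so Kolmogorov's criterion \cite[Theorem VII.8.3]{FellerII} gives $(M_n-\E M_n)/n^\ga\asto0$, and therefore $M_n/n^\ga\asto w$.

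The core of \eqref{lhz2} is an a.s.\ law of large numbers for a ``doubly indexed'' counting variable, which I would isolate as a claim: if $(W_k)$ are \iid{} with $\P(W_1>t)=\gb t^{\ga-1}$ for all large $t$, and $\theta\in\ooi$, then a.s.
\begin{equation*}
  n^{-\ga}\#\bigset{k\le\theta n:W_k>n}\to\theta\gb\qquad\asmtoo.
\end{equation*}
For fixed $n$ this is a sum of $\floor{\theta n}$ \iid{} $\Be(\gb n^{\ga-1})$ terms, with mean $\sim\theta\gb n^\ga$ and variance at most the mean, so along a geometric subsequence $n_j:=\ceil{\rho^j}$ ($\rho>1$) Chebyshev and Borel--Cantelli (note $\sum_j n_j^{-\ga}<\infty$) give a.s.\ convergence to $\theta\gb$. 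The one genuine obstacle is the interpolation to all $n$, because the counting variable is \emph{not} monotone in $n$ (both the summation range and the threshold grow). I would handle it by the sandwich, valid for $n_j\le n<n_{j+1}$,
\begin{equation*}
  \#\bigset{k\le\theta n_j:W_k>n_{j+1}}\le\#\bigset{k\le\theta n:W_k>n}\le\#\bigset{k\le\theta n_{j+1}:W_k>n_j},
\end{equation*}
whose two ends, divided by $n_j^\ga$, converge a.s.\ to $\theta\gb\rho^{\ga-1}$ and $\theta\gb\rho$ respectively (same Chebyshev/Borel--Cantelli argument, using $n_{j+1}/n_j\to\rho$); this yields $\theta\gb\rho^{-1}\le\liminf n^{-\ga}\#\{\dots\}\le\limsup\le\theta\gb\rho$, and letting $\rho\downarrow1$ along a countable sequence proves the claim.

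It remains to deduce \eqref{lhz2}, treating $u\ge0$ and $u\le0$ separately. Put $W_k:=e^{(X_k-\ga z+\log\ga-u)/(1-\ga)}$, so that $X_k>x_n+u\iff W_k>n$ and $\P(W_1>t)=\ga w e^{-u}t^{\ga-1}$ for large $t$; thus the claim applies with $\gb=\ga we^{-u}$. If $u\ge0$, then every $k\le n$ with $X_k>x_n+u$ is automatically accepted (as $x_n+u\ge x_n\ge x_k$), so $\Mle{x_n+u}_n=M_n-\#\{k\le n:X_k>x_n+u\}$; the claim with $\theta=1$ gives $\#\{k\le n:X_k>x_n+u\}/n^\ga\asto\ga we^{-u}$, whence $\Mle{x_n+u}_n/M_n\to(w-\ga we^{-u})/w=1-\ga e^{-u}=F(u)$. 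If $u\le0$, set $\theta:=e^{u/(1-\ga)}\in\ooi$; since $x_{\theta n}=x_n+u$ and $x_k$ is increasing, no candidate $k>\theta n$ can be both accepted and have $X_k\le x_n+u$, while for $k\le\theta n$ being accepted means $X_k>x_k$, so $\Mle{x_n+u}_n=M_{\floor{\theta n}}-\#\{k\le\theta n:X_k>x_n+u\}+O(1)$. Applying \eqref{lhz1} to $M_{\floor{\theta n}}$ (giving $\sim w\theta^\ga n^\ga$) and the claim with this $\theta$ and $\gb=\ga we^{-u}$ to the second term (giving $\sim\theta\gb n^\ga=\ga w\theta^\ga n^\ga$ since $\theta\gb=\ga we^{u/(1-\ga)-u}=\ga w\theta^\ga$), we get $\Mle{x_n+u}_n/n^\ga\asto(1-\ga)w\theta^\ga=(1-\ga)we^{\ga u/(1-\ga)}$, so dividing by $M_n/n^\ga\to w$ yields $F(u)$; the two formulas coincide at $u=0$, completing the proof.
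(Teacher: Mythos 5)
Your proposal is correct, and it reaches \eqref{lhz1} and \eqref{lhz2} by a genuinely different route. The paper proves concentration by writing $M_n$, $\Mgt{x_n+u}_n$, and (for $u\le 0$) $\Mle{x_n+u}_n$ directly as sums of \emph{independent but not identically distributed} indicators, and then uses a Chernoff bound (giving error probabilities $\exp(-cn^\ga)$) together with the Borel--Cantelli lemma over \emph{all} $n$; the strong concentration obviates any subsequence argument. You instead prove \eqref{lhz1} by Kolmogorov's variance criterion, and you isolate a clean auxiliary claim about the count $\#\{k\le\theta n:W_k>n\}$ for \iid{} $W_k$ with a Pareto-type tail, proved by Chebyshev plus Borel--Cantelli along a geometric subsequence $n_j=\ceil{\rho^j}$ and the sandwich $\#\{k\le\theta n_j:W_k>n_{j+1}\}\le\#\{k\le\theta n:W_k>n\}\le\#\{k\le\theta n_{j+1}:W_k>n_j\}$ to handle the lack of monotonicity. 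You then apply this claim twice: once directly with $\theta=1$ for $u\ge0$, and once via the neat time-change observation $x_{\theta n}=x_n+u$ with $\theta=e^{u/(1-\ga)}$ for $u\le 0$, which reduces $\Mle{x_n+u}_n$ to $M_{\floor{\theta n}}$ minus an \iid{} count. Your approach trades the paper's stronger exponential tail bound for a more elementary (Chebyshev) argument paid for by the interpolation step, and it is organized around a single reusable lemma rather than two separate expectation computations; both are sound, and your identity $\theta\gb=\ga w\theta^\ga$ makes the cancellation transparent.

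One small omission relative to the paper: your argument establishes \eqref{lhz2} a.s.\ for each \emph{fixed} $u$, but the stated conclusion is that a.s.\ \eqref{lhz2} holds \emph{for every real $u$ simultaneously}. The paper closes this gap by noting that one can restrict to rational $u$ and then extend to all $u$ using monotonicity of the left-hand side in $u$ and continuity of $F$; you should add a sentence to the same effect (the same remark applies to the end of the proof of \refT{Tle}, where it is used).

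Two very minor expository slips in the sandwich step: to obtain the lower bound $\theta\gb\rho^{-1}$ for $\liminf n^{-\ga}\#\{\ldots\}$ you need to normalize the left end by $n_{j+1}^\ga$ rather than $n_j^\ga$ (equivalently, multiply by $(n_j/n_{j+1})^\ga$), which turns your $\theta\gb\rho^{\ga-1}$ into $\theta\gb\rho^{-1}$; and the normalizing sequence for Kolmogorov's criterion should be taken nondecreasing, which $b_k=k^\ga$ is, so no problem, but it is worth stating. Neither affects the validity of the argument.
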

\begin{proof}
Consider first a fixed $u\ge0$.
Then every value $X_k>x_n+u$ with $k\le n$ will have $X_k>x_n\ge
x_k$, and thus be accepted. Hence, $\Mgt{x_n+u}_n$ is the number of all such
values, and since the indicators $\indic{X_k>x_n+u}$ are \iid{} with
$\P(X_k>x_n+u)=e^{-x_n-u}$,
\begin{align}
  \Mgt{x_n+u}_n\xsim\Bi\bigpar{n, e^{-x_n-u}}.
\end{align}
This binomial random variable has mean $ne^{-x_n-u}=\ga w e^{-u}n^\ga$,
recalling \eqref{xn} and \eqref{wz}.
A standard Chernoff bound, see \eg{} \cite[Corollary 2.3]{JLR}, shows that
for every $\eps>0$,
\begin{align}\label{chernoff}
  \P\bigpar{\bigabs{\Mgt{x_n+u}_n/\E \Mgt{x_n+u}_n-1}>\eps} 
  \le 2 \exp\bigpar{-c(\eps,u)n^\ga}.
\end{align}
It follows, by the Borel--Cantelli lemma, that a.s.
$\Mgt{x_n+u}_n/\E \Mgt{x_n+u}_n\to1$, \ie,
\begin{align}\label{emma}
\Mgt{x_n+u}_n \sim \E \Mgt{x_n+u}_n 
%= ne^{-x_n-u}
=\ga wn^{\ga}e^{-u}.
\end{align}

Consider next $M_n$. This too is a sum of independent indicators
$I_k:=\indic{X_k>x_k}$. 
Furthermore,  for $k$ large enough so that $x_k\ge0$,
\begin{align}
  p_k:=\E I_k = e^{-x_k}=\ga e^{-\ga z}k^{\ga-1}
=\ga w k^{\ga-1}.
\end{align}
Since $M_n=\sumkn I_k$, we have
\begin{align}\label{jesper}
  \E M_n=\sumkn p_k = \sumkn \ga w k^{\ga-1} + O(1)
=wn^\ga+O(1).
\end{align}
The random variables $I_k$ are not identically distributed, but the Chernoff
bound holds for sums of arbitrary independent indicator variables
\cite[Theorem 2.8]{JLR}, and thus \eqref{chernoff} holds for $M_n$ too, and
we obtain as above
\begin{align}
  M_n/\E M_n\asto 1,
\end{align}
which together with \eqref{jesper} yields \eqref{lhz1}.

Furthermore, \eqref{lhz1} and \eqref{emma} show that for every fixed $u\ge0$,
\begin{align}
\xfrac{\Mgt{x_n+u}_n}{M_n}
\asto \ga e^{-u},
\end{align}
and thus \eqref{lhz2} holds for $u\ge0$.

Finally, consider a fixed $u\le 0$. Similarly as above, we write 
$\Mle{x_n+u}_n$ as a sum of independent indicators 
$I_k':=\indic{x_k < X_k\le  x_n+u}$. Note that $I'_k= 0$ unless
$x_k<x_n+u$, which by \eqref{xn} is equivalent to
\begin{align}
  (1-\ga)\log k <(1-\ga)\log n + u
\end{align}
or
\begin{align}
k< e^{u/(1-\ga)}n.   
\end{align}
For such $k$, except possibly for some small $k$ with $x_k<0$,
\begin{align}
  \E I_k':=e^{-x_k}-e^{-(x_n+u)}
=\ga w k^{\ga-1} - \ga w e^{-u}n^{\ga-1}.
\end{align}
Hence, 
\begin{align}\label{Emle}
  \E \Mle{x_n+u}_n
&= \sum_{k=1}^{e^{u/(1-\ga)}n} 
\bigpar{\ga w k^{\ga-1} - \ga w e^{-u}n^{\ga-1}} +O(1)
\notag\\&
=w \bigpar{e^{ u/(1-\ga)}n}^\ga -\ga w e^{-u} e^{u/(1-\ga)} n^\ga +O(1)
\notag\\&
=wn^\ga e^{\ga u/(1-\ga)}\xpar{1 -\ga} +O(1).
\end{align}
The Chernoff argument applies again, and yields
\begin{align}
     \Mle{x_n+u}_n/  \E \Mle{x_n+u}_n\asto1
\end{align}
and thus, using \eqref{Emle} and \eqref{lhz1}, a.s.,
\begin{align}
     \Mle{x_n+u}_n \sim (1-\ga)wn^\ga e^{\ga u/(1-\ga)}
\end{align}
and
\begin{align}
     \Mle{x_n+u}_n /M_n \to (1-\ga) e^{\ga u/(1-\ga)} = F(u).
\end{align}
We have proved that \eqref{lhz2} holds \as{} for every fixed
$u\in\bbR$. Hence, it holds \as{} for every rational $u$, but this implies
that it holds for all $u$ simultaneously, 
since the \lhs{} is monotone in $u$ and the
\rhs{} is continuous; we omit the details.
\end{proof}

\begin{proof}[Proof of \refT{Tle}]
Consider the rank-based strategy (as in the rest of the paper), which we
denote by $\cR$, together
with the strategies $\cH(z)$ for all rational $z\in\bbR$, acting on the same
sequence $X_n$. We indicate quantities for the strategy $\cH(z)$ with an
extra argument $z$; for example, $M_{n}(z)$ for the number of accepted
values of the $n$ first ones.

Recall that the strategy $\cR$ is to accept $X_n$ if $X_n>\YYn$. 
By \eqref{yyn2} and \eqref{xn}, if $z<Z+\xi$, then \as{} $\YYn>x_n(z)$ for
all large $n$, and thus (for large $n$) every value accepted by $\cR$ is
also accepted by $\cH(z)$. Consequently, \as{} 
\begin{equation}\label{mp}
M_n \le M_n(z) +O(1). 
\end{equation}
Similarly, for every fixed real $u$, \as,
\begin{equation}\label{mpu}
\Mgt{\YYn+u}_n \le \Mgt{x_n(z)+u}_n(z)+O(1).
\end{equation}
For every rational $z>Z+\xi$, \eqref{mp} and \eqref{mpu} hold \as{} 
with the inequalities in the opposite direction.

Consequently, 
with $G(u):=1-F(u)$,
\as, for every rational $z<Z+\xi$ and $z'>Z+\xi$, 
using \eqref{lhz1}, \eqref{wz}, and \eqref{lhz2},
\begin{align}
    \frac{\Mgt{\YYn+u}_n}{M_n}
&\le  \frac{\Mgt{x_n(z)+u}_n(z)+O(1)}{M_n(z')+O(1)}
=  \frac{\Mgt{x_n(z)+u}_n(z)+O(1)}{M_n(z)}
 \frac{M_n(z)}{M_n(z')+O(1)} 
\notag\\&\to G(u)\frac{w(z)}{w(z')}
= G(u) e^{\ga(z'-z)}.
\end{align}
Hence, \as,
\begin{equation}
\limsup_\ntoo \frac{\Mgt{\YYn+u}_n}{M_n}
\le G(u) e^{\ga(z'-z)}
\end{equation}
for every rational $z$ and $z'$ with $z<Z+\xi<z'$. Consequently, \as
\begin{equation}
\limsup_\ntoo \frac{\Mgt{\YYn+u}_n}{M_n}
\le G(u)=1-F(u).
\end{equation}
A lower bound follows in the same way, now comparing in the opposite
directions with $z>Z+\xi>z'$.
This proves \eqref{tle} \as{} for a fixed $u$, and thus for all rational
$u$ simultaneously, which again implies the  result for all real $u$
simultaneously by monotonicity and continuity.
\end{proof}

\begin{corollary}
  Suppose that \eqref{sofie} holds for some $\ga\in(0,1)$.
Then, 
the fraction of the accepted values that are larger than the current
threshold, and thus would have been accepted now, converges \as{} to $\ga$.
\end{corollary}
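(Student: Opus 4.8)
The plan is to read the statement off directly from \refT{Tle} by specialising to $u=0$. Recall that the current threshold when candidate $n$ is examined is $\YYn=Y_{M_{n-1}}$, so an accepted value $X_k$ with $k\le n$ "would have been accepted now" precisely when $X_k>\YYn$; the number of such values is $\Mgt{\YYn}_n$. Hence the fraction in question is, using the definition $\Mgt{x}_n:=M_n-\Mle{x}_n$,
\[
  \frac{\Mgt{\YYn}_n}{M_n}=1-\frac{\Mle{\YYn}_n}{M_n},
\]
which is a.s.\ well defined for all large $n$ because $M_n\asto\infty$; indeed $M_n/n^{\ga}\asto W>0$ by \refT{TI1} (using $\ga>0$, and that $W=e^{(\ga-1)\gam-\ga\rho-\ga}e^{-\ga Z}>0$ a.s.\ since $Z$ is a.s.\ finite by \refL{LZ}).

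Now apply \eqref{tle} with $u=0$: it gives $\Mle{\YYn}_n/M_n\asto F(0)$. Both branches of the formula for $F$ in \eqref{tle} give $F(0)=(1-\ga)e^{0}=1-\ga$ (so $F$ is continuous at $0$ and the value is unambiguous), and therefore
\[
  \frac{\Mgt{\YYn}_n}{M_n}\asto 1-F(0)=\ga,
\]
which is exactly the claim.

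The only point deserving a word of care — and it is not really an obstacle — is the distinction between the strict inequality $X_k>\YYn$ and equality: the threshold $\YYn=Y_{M_{n-1}}$ is itself one of the previously accepted values, so there is always exactly one accepted candidate among $1,\dots,n$ for which $X_k=\YYn$ rather than $X_k>\YYn$. This single candidate is already absorbed into the bookkeeping $\Mgt{\YYn}_n:=M_n-\Mle{\YYn}_n$ used above, and in any event is negligible after dividing by $M_n\to\infty$; so the corollary is an immediate consequence of \refT{Tle}.
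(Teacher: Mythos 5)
Your proposal is correct and is essentially the paper's own proof: the fraction in question is $\Mgt{\YYn}_n/M_n$, and the result is the case $u=0$ of \refT{Tle}, with $F(0)=1-\ga$ from either branch. The extra remarks (that $M_n\asto\infty$ and that the one accepted value equal to the threshold is harmless) are fine but not needed.
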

\begin{proof}
  This fraction is $\Mgt{\YYn}_n/M_n$, so the result is the case $u=0$ of
  \refT{Tle}. 
\end{proof}

\appendix
\section{Proof of \eqref{sofie2}}\label{A1}

\begin{lemma}
Suppose that \eqref{sofie} holds for some $\ga\in(0,1]$. Then \eqref{sofie2}
holds. 
\end{lemma}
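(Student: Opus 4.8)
Write $\eps_m := \dfrac{1}{r(m)} - \dfrac{1}{\ga m}$, so that the hypothesis \eqref{sofie} reads $\summ |\eps_m| < \infty$. Since $\dfrac{r(m)}{\ga m} = \bigpar{1+\ga m\eps_m}\qw$ with $1+\ga m\eps_m = \ga m/r(m) > 0$, the assertion \eqref{sofie2} is equivalent to $m\eps_m \to 0$, and the plan is to prove this by contradiction. Suppose $m\eps_m \not\to 0$; then there are $\gd>0$ and indices $m_j\to\infty$ with $|m_j\eps_{m_j}|\ge\gd$, and after passing to a subsequence we may assume that either $\eps_{m_j}\ge\gd/m_j$ for all $j$, or $\eps_{m_j}\le-\gd/m_j$ for all $j$. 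I will show each alternative forces $\summ|\eps_m|=\infty$, the desired contradiction.

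The mechanism is that \eqref{eq:rr} makes $r$ non-decreasing and Lipschitz with constant $1$, so a single index at which $\eps_{m_j}$ is large of a given sign spreads to a whole block of consecutive indices on which $\eps_m$ keeps that sign and is $\gtrsim 1/m_j$ in size. In the first case, $\eps_{m_j}\ge\gd/m_j$ gives $r(m_j)\le c_1 m_j$ with $c_1 := \ga/(1+\ga\gd) < \ga$; then for integers $m$ with $m_j\le m\le(1+\eta)m_j$ we have $r(m)\le r(m_j)+(m-m_j)\le(c_1+\eta)m_j$, so fixing $\eta\in(0,\ga-c_1)$ and using $\frac1{\ga m}\le\frac1{\ga m_j}$ we get $\eps_m\ge\frac1{(c_1+\eta)m_j}-\frac1{\ga m_j}=:c_2/m_j>0$ for every such $m$; as the block contains $\ge\eta m_j$ integers, it contributes $\ge\eta c_2/2$ to $\summ|\eps_m|$ (for $m_j$ large). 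In the second case, $\eps_{m_j}\le-\gd/m_j$ forces $\gd<1/\ga$ and $r(m_j)\ge c_3 m_j$ with $c_3:=\ga/(1-\ga\gd)>\ga$; looking to the \emph{left}, for integers $m$ with $(1-\eta)m_j\le m\le m_j$ monotonicity and \eqref{eq:rr} give $r(m)\ge r(m_j)-(m_j-m)\ge(c_3-\eta)m_j$, so fixing $\eta\in(0,c_3-\ga)$ and using $\frac1{\ga m}\ge\frac1{\ga m_j}$ we get $-\eps_m\ge\frac1{\ga m_j}-\frac1{(c_3-\eta)m_j}=:c_4/m_j>0$, and again this block contributes $\ge\eta c_4/2$ to $\summ|\eps_m|$.

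To finish, pass to a further subsequence of $(m_j)$ so that the blocks (of the form $[m_j,(1+\eta)m_j]$ in the first case, $[(1-\eta)m_j,m_j]$ in the second) are pairwise disjoint; this is possible since $m_j\to\infty$, choosing $m_{j_{l+1}}$ inductively with $m_{j_{l+1}}>m_{j_l}/(1-\eta)$. Summing the block contributions over the infinitely many disjoint blocks yields $\summ|\eps_m|=\infty$, contradicting \eqref{sofie}. Hence $m\eps_m\to0$, which is exactly \eqref{sofie2}. The only delicate point is the bookkeeping of the constants so that $\eta$ can be chosen small enough to keep $\eps_m$ of one fixed sign and of order $1/m_j$ across the whole block; as the estimates above indicate, there is no genuine obstacle, only routine care.
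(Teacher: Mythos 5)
Your proof is correct, and it is in essence the same argument as the paper's: a single index where $1/r(m)$ deviates from $1/(\ga m)$ by order $1/m$ is spread to a whole block of $\Theta(m)$ nearby indices on which the deviation keeps the same sign and the same order, and infinitely many disjoint such blocks make $\sum_m |\eps_m|$ diverge. The one notable difference in mechanism: where you invoke the Lipschitz bound $|r(m+i)-r(m)|\le|i|$ from \eqref{eq:rr} to control $r$ on the block, the paper gets by with monotonicity of $r$ alone (looking to the right when $r(m)$ is too large so that $r(k)\ge r(m)$ stays large, and to the left when $r(m)$ is too small so that $r(k)\le r(m)$ stays small). That makes the paper's version slightly more economical, but both routes are valid and the bookkeeping in yours is sound, including the observation that $\eps_m>-1/(\ga m)$ forces $\gd<1/\ga$ in the negative case, and the passage to a sparse enough subsequence to make the blocks disjoint.
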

\begin{proof}
  Fix $\gd>0$ and suppose that $m$ is such that $r(m)\ge (1+\gd)^{2} \ga m$.
Then, for every $k$ with $m\le k\le(1+\gd)m$, we have $r(k)\ge r(m)$ and thus
\begin{align}
  (\ga k)\qw - r(k)\qw \ge (1+\gd)\qw (\ga m)\qw -(1+\gd)\qww (\ga m)\qw
=c(\gd) m\qw.
\end{align}
Hence,
\begin{align}\label{johan}
  \sum_{k=m}^\infty\bigabs{r(k)\qw-(\ga k)\qw}
\ge \sum_{k=m}^{\floor{(1+\gd)m}} c(\gd) m\qw \ge c(\gd).
\end{align}
Since the sum in \eqref{sofie} converges, \eqref{johan} cannot hold for
arbitrarily large $m$, and thus $r(m)< (1+\gd)^{2} \ga m$ for large $m$.
Similarly, if $0<\gd<1$, then
$r(m)> (1-\gd)^{2} \ga m$ for large $m$.
This proves \eqref{sofie2}.
\end{proof}

\newcommand\AAP{\emph{Adv. Appl. Probab.} }
\newcommand\JAP{\emph{J. Appl. Probab.} }
\newcommand\JAMS{\emph{J. \AMS} }
\newcommand\MAMS{\emph{Memoirs \AMS} }
\newcommand\PAMS{\emph{Proc. \AMS} }
\newcommand\TAMS{\emph{Trans. \AMS} }
\newcommand\AnnMS{\emph{Ann. Math. Statist.} }
\newcommand\AnnPr{\emph{Ann. Probab.} }
\newcommand\CPC{\emph{Combin. Probab. Comput.} }
\newcommand\JMAA{\emph{J. Math. Anal. Appl.} }
\newcommand\RSA{\emph{Random Struct. Alg.} }
\newcommand\ZW{\emph{Z. Wahrsch. Verw. Gebiete} }
\newcommand\DMTCS{\jour{Discr. Math. Theor. Comput. Sci.} }

\newcommand\AMS{Amer. Math. Soc.}
\newcommand\Springer{Springer-Verlag}
\newcommand\Wiley{Wiley}

\newcommand\vol{\textbf}
\newcommand\jour{\emph}
\newcommand\book{\emph}
\newcommand\inbook{\emph}
\def\no#1#2,{\unskip#2, no. #1,} %(typeset after year) 
\newcommand\toappear{\unskip, to appear}

\newcommand\arxiv[1]{\texttt{arXiv:#1}}
\newcommand\arXiv{\arxiv}

\def\nobibitem#1\par{}

%\newpage

\end{document}